\renewcommand\eqref[1]{(\ref{#1})}
\newcommand*{\mint}[1]{%
  % #1: overlay symbol
  \mint@l{#1}{}%
}
\newcommand*{\mint@l}[2]{%
  % #1: overlay symbol
  % #2: limits
  \@ifnextchar\limits{%
    \mint@l{#1}%
  }{%
    \@ifnextchar\nolimits{%
      \mint@l{#1}%
    }{%
      \@ifnextchar\displaylimits{%
        \mint@l{#1}%
      }{%
        \mint@s{#2}{#1}%
      }%
    }%
  }%
}
\newcommand*{\mint@s}[2]{%
  % #1: limits
  % #2: overlay symbol
  \@ifnextchar_{%
    \mint@sub{#1}{#2}%
  }{%
    \@ifnextchar^{%
      \mint@sup{#1}{#2}%
    }{%
      \mint@{#1}{#2}{}{}%
    }%
  }%
}
\def\mint@sub#1#2_#3{%
  \@ifnextchar^{%
    \mint@sub@sup{#1}{#2}{#3}%
  }{%
    \mint@{#1}{#2}{#3}{}%
  }%
}
\def\mint@sup#1#2^#3{%
  \@ifnextchar_{%
    \mint@sup@sub{#1}{#2}{#3}%
  }{%
    \mint@{#1}{#2}{}{#3}%
  }%
}
\def\mint@sub@sup#1#2#3^#4{%
  \mint@{#1}{#2}{#3}{#4}%
}
\def\mint@sup@sub#1#2#3_#4{%
  \mint@{#1}{#2}{#4}{#3}%
}
\newcommand*{\mint@}[4]{%
  % #1: \limits, \nolimits, \displaylimits
  % #2: overlay symbol: -, =, \ldots
  % #3: subscript
  % #4: superscript
  \mathop{}%
  \mkern-\thinmuskip
  \mathchoice{%
    \mint@@{#1}{#2}{#3}{#4}%
        \displaystyle\textstyle\scriptstyle
  }{%
    \mint@@{#1}{#2}{#3}{#4}%
        \textstyle\scriptstyle\scriptstyle
  }{%
    \mint@@{#1}{#2}{#3}{#4}%
        \scriptstyle\scriptscriptstyle\scriptscriptstyle
  }{%
    \mint@@{#1}{#2}{#3}{#4}%
        \scriptscriptstyle\scriptscriptstyle\scriptscriptstyle
  }%
  \mkern-\thinmuskip
  \int#1%
  \ifx\\#3\\\else_{#3}\fi
  \ifx\\#4\\\else^{#4}\fi
}
\newcommand*{\mint@@}[7]{%
  % #1: limits
  % #2: overlay symbol
  % #3: subscript
  % #4: superscript
  % #5: math style
  % #6: math style for overlay symbol
  % #7: math style for subscript/superscript
  \begingroup
    \sbox0{$#5\int\m@th$}%
    \sbox2{$#5\int_{}\m@th$}%
    \dimen2=\wd0 %
    % => \dimen2 = width of \int
    \let\mint@limits=#1\relax
    \ifx\mint@limits\relax
      \sbox4{$#5\int_{\kern1sp}^{\kern1sp}\m@th$}%
      \ifdim\wd4>\wd2 %
        \let\mint@limits=\nolimits
      \else
        \let\mint@limits=\limits
      \fi
    \fi
    \ifx\mint@limits\displaylimits
      \ifx#5\displaystyle
        \let\mint@limits=\limits
      \fi
    \fi
    \ifx\mint@limits\limits
      \sbox0{$#7#3\m@th$}%
      \sbox2{$#7#4\m@th$}%
      \ifdim\wd0>\dimen2 %
        \dimen2=\wd0 %
      \fi
      \ifdim\wd2>\dimen2 %
        \dimen2=\wd2 %
      \fi
    \fi
    \rlap{%
      $#5%
        \vcenter{%
          \hbox to\dimen2{%
            \hss
            $#6{#2}\m@th$%
            \hss
          }%
        }%
      $%
    }%
  \endgroup
}
\numberwithin{equation}{section}
\theoremstyle{plain}
\newtheorem{thm}{Theorem}[section]
\newtheorem{prop}[thm]{Proposition}
\newtheorem{cor}[thm]{Corollary}
\newtheorem{lem}[thm]{Lemma}
\theoremstyle{definition}
\newtheorem{defn}[thm]{Definition}
\newtheorem{rem}[thm]{Remark}
\newcommand{\G}{\mathbb{G}}
\newcommand{\Ra}{\mathcal{R}^{\frac{a}{\nu}}}
\newcommand{\R}{\mathcal{R}}
\newcommand{\X}{\mathbb{X}}
\title[Logarithmic Sobolev-type inequalities on Lie groups]{Logarithmic Sobolev-type inequalities on Lie groups}
\author[M. Chatzakou]{Marianna Chatzakou}
\address{
	Marianna Chatzakou:
	\endgraf
    Department of Mathematics: Analysis, Logic and Discrete Mathematics
    \endgraf
    Ghent University, Belgium
  	\endgraf
	{\it E-mail address} {\rm marianna.chatzakou@ugent.be}
		}
\author[A. Kassymov]{Aidyn Kassymov}
\address{
  Aidyn Kassymov:
  \endgraf
  Institute of Mathematics and Mathematical Modeling
  \endgraf
  28 Shevchenko str.
  \endgraf
  050010 Almaty
  \endgraf
  Kazakhstan
  \endgraf
  and
   \endgraf
  Department of Mathematics: Analysis, Logic and Discrete Mathematics
  \endgraf
  Ghent University, Belgium
  \endgraf
	{\it E-mail address} {\rm kassymov@math.kz}}
\author[M. Ruzhansky]{Michael Ruzhansky}
\address{
  Michael Ruzhansky:
  \endgraf
  Department of Mathematics: Analysis, Logic and Discrete Mathematics
  \endgraf
  Ghent University, Belgium
  \endgraf
 and
  \endgraf
  School of Mathematical Sciences
  \endgraf
  Queen Mary University of London
  \endgraf
  United Kingdom
  \endgraf
  {\it E-mail address} {\rm michael.ruzhansky@ugent.be}
  }
\begin{document}

\thanks{The authors are supported by the FWO Odysseus 1 grant G.0H94.18N: Analysis and Partial Differential Equations and by the Methusalem programme of the Ghent University Special Research Fund (BOF) (Grant number 01M01021). Marianna Chatzakou is a postdoctoral fellow of the Research Foundation – Flanders (FWO) under the postdoctoral grant No 12B1223N.  Michael Ruzhansky is also supported by EPSRC grant EP/R003025/2. Aidyn Kassymov is also supported by the Science Committee of the Ministry of Science and Higher Education of the Republic of Kazakhstan (Grant No. AP23484106). \\
\indent
{\it Keywords:} log-Sobolev inequality; log-Gagliardo-Nirenberg inequality; log-Caffarelli-Kohn-Nirenberg inequality; Nash inequality; Gross inequality; homogeneous groups; stratified groups; graded groups; Lie groups}

\begin{abstract} 
In this paper we show a number of logarithmic inequalities on several classes of Lie groups: log-Sobolev inequalities on general Lie groups, log-Sobolev (weighted and unweighted),  log-Gagliardo-Nirenberg and log-Caffarelli-Kohn-Nirenberg inequalities on graded Lie groups. Furthermore, on stratified groups, we show that one of the obtained inequalities is equivalent to a Gross-type log-Sobolev inequality with the horizontal gradient. As a result, we obtain the Gross log-Sobolev inequality on general stratified groups but, {\bf very interestingly}, with the Gaussian measure on the first stratum of the group. Moreover, our methods also yield weighted versions of the Gross log-Sobolev inequality. In particular, we also obtain new weighted Gross-type log-Sobolev inequalities on $\mathbb R^n$ for arbitrary choices of homogeneous quasi-norms. As another consequence we derive the Nash inequalities on graded groups and an example application to the decay rate for the heat equations for sub-Laplacians on stratified groups. We also obtain weighted versions of log-Sobolev and Nash inequalities for general Lie groups.
\end{abstract}

\maketitle
  
\tableofcontents

\section{Introduction}

In this paper we derive the logarithmic versions of several well-known functional inequalities.  Some inequalities are obtained with best constants, or with semi-explicit constants, which are useful for some further applications. Our techniques allow us to derive these inequalities in rather general settings, so we will be working in the settings of general Lie groups, as well as on several classes of nilpotent Lie groups, namely, graded and homogeneous Lie groups. Since on stratified Lie groups we also have the horizontal gradient at our disposal, we will also formulate versions of some of the inequalities in the setting of stratified groups, using the horizontal gradient instead of a power of a sub-Laplacian. 

In the Euclidean spaces, in one of the Sobolev's pioneering works, Sobolev obtained the following inequality, which at this moment is bearing his name:
\begin{equation}
    \|u\|_{L^{p^{*}}(\mathbb{R}^{n})}\leq C \|\nabla u\|_{L^{p}(\mathbb{R}^{n})},
\end{equation}
where $1<p<n$, $p^{*}=\frac{np}{n-p}$ and $C=C(n,p)>0$ is a positive constant. The best constant of this inequality was obtained by Talenti in \cite{T76} and Aubin in \cite{A76}. The Sobolev inequality is one of the most important tools in studying PDE and variational problems.
Folland and Stein extended Sobolev's inequality to general stratified  groups (see e.g. \cite{GV}):
if $\mathbb{G}$ is a stratified group and $\Omega\subset \mathbb{G}$ is an open set, then there exists a constant $C>0$ such that  we have
\begin{equation}\label{Sobolev-Folland-Stein}
\|u\|_{L^{p^{*}}(\Omega)}\leq C\left(\int_{\Omega}|\nabla_{H} u|^{p}dx\right)^{\frac{1}{p}},\; 1<p<Q,\; p^{*}=\frac{Qp}{Q-p},
\end{equation}
for all $u\in C_{0}^{\infty}(\Omega)$. Here $\nabla_{H}$ is the horizontal gradient and $Q$ is the homogeneous dimension of $\mathbb{G}$.
Inequality \eqref{Sobolev-Folland-Stein} is called the Sobolev or Sobolev-Folland-Stein inequality. 
Furthermore, in relation to groups, we can mention
Sobolev inequalities and embeddings on general unimodular Lie groups \cite{VSCC93}, on general locally compact unimodular groups \cite{AR20}, on general noncompact Lie groups \cite{BPTV19,BPV21,Gro92}, as well as Hardy-Sobolev inequalities on general Lie groups \cite{RY18a}. 

Particularly, in the seminal paper ``Logarithmic Sobolev inequalities on Lie groups'' \cite{Gro92} L. Gross obtained the logarithmic Sobolev inequality on general Lie groups with respect to the heat kernel measure. Here we give inequalities with respect to the Haar measure, from which we also derive Gross type inequalities but with the semi-Gaussian measures on the first stratum, in the case when a group is stratified. In fact, we also show an equivalence of some of such inequalities.

The Sobolev inequality on graded groups using Rockland operators was proved in \cite{FR17} and the best constant for it was obtained in \cite{RTY20}. The compactness of such embeddings was also studied in \cite{GKR22}.

Furthermore, the logarithmic Sobolev inequality was shown to hold on $\mathbb{R}^n$ in the following form: 
\begin{equation}\label{ddlogin}
\int_{\mathbb{R}^{n}}\frac{|u|^{p}}{\|u\|^{p}_{L^{p}(\mathbb{R}^{n})}}\log\left(\frac{|u|^{p}}{\|u\|^{p}_{L^{p}(\mathbb{R}^{n})}}\right)dx\leq\frac{n}{p}\log\left(C\frac{\|\nabla u\|^{p}_{L^{p}(\mathbb{R}^{n})}}{\|u\|^{p}_{L^{p}(\mathbb{R}^{n})}}\right).
\end{equation}
We can refer to \cite{Wei78} for the case $p=2$, but to e.g. \cite{DD03} for some history review of cases 
$1< p<n$, including the discussion of best constants.

In \cite{Mer08}, the author obtained a logarithmic Gagliardo-Nirenberg inequality. In \cite {FNQ18} and \cite{KRS20}  the authors proved the logarithmic Sobolev inequality and the fractional logarithmic Sobolev inequality on the Heisenberg group and on homogeneous  groups, respectively. A fractional weighted version of \eqref{ddlogin} on homogeneous groups was proved in \cite{KS20}. In this paper, we prove logarithmic Sobolev inequalities on graded groups and weighted logarithmic Sobolev inequalities on general Lie groups. As applications of these inequalities we show Nash and weighted Nash inequalities on graded and general Lie groups, respectively. The log-Sobolev type inequalities with some specific orders of weights are also sometimes called the log-Hardy inequalities \cite{DDFT10}.

The subject of the logarithmic inequalities has been extensively investigated and it is impossible to give a reasonably complete  review of the literature here. We refer to surveys \cite{AB00, GZ03}, to works on relations to other inequalities \cite{BL00, MF93}, coercive inequalities on Carnot groups \cite{Bou21, BZ21a, BZ21b, BZ21c, HZ09}, Nash \cite{Nas58, Bec98, BDS20, OS18} and weighted Nash \cite{BBGM12} inequalities. There are works on the fractional Laplacian \cite{Bec12} as well as on the 
logarithmic Sobolev inequalities for the fractional Laplacian  \cite{KRS20} in the setting of Folland and Stein (\cite{FS}) homogeneous groups.
Logarithmic inequalities have various applications in different areas, see e.g. \cite{BH99, Car91, FNQ18, Mer08, OW16}.
Explicit constants for some inequalities play an important role (e.g. \cite{T76, Bec75, DD03}), for 
best constants on graded Lie groups see \cite{RTY20}. We refer to Davies \cite{Dav89} for a general exposition linking log-Sobolev and Nash inequalities to the hypercontractivity of the heat semigroup.

In view of extending Sobolev inequalities to infinite dimensional spaces, in \cite{Gro75}, L. Gross showed the logarithmic Sobolev inequality in 
the following form: 
%\textcolor{red}{Can you give a reference to the formula there? I could not find it in his paper}
%\begin{equation}
%    2\int_{\mathbb{R}^{n}}|u(x)|^{2}\log  %\left(\frac{|u(x)|}{\|u\|^{2}_{L^{2}(\mathbb{R}^{n})}}\right)dx+n(1+\log a)\|u\|^{2}_{L^{2}(\mathbb{R}^{n})}\leq\frac{a^{2}}{\pi}\|\nabla u\|^{2}_{L^{2}(\mathbb{R}^{n})},
%\end{equation}
%where $a>0$, for all $u\in H^{1}(\mathbb{R}^{n}).$ 
%The Gross inequality 
\begin{equation}\label{EQ:Gross}
    \int_{\mathbb{R}^{n}}|u(x)|^{2}\log  \left(\frac{|u(x)|}{\|u\|_{L^{2}(\mu)}}\right)d\mu(x)
    \leq  \int_{\mathbb{R}^{n}}|\nabla u(x)|^{2}d\mu(x),
\end{equation}
with the Gaussian measure $d\mu(x)=(2\pi)^{-n/2}e^{-|x|^2/2}dx$.
The importance of \eqref{EQ:Gross} and the use of the Gaussian measure is that it has a uniform constant independent of the dimension, allowing passing to infinite dimensional spaces, and in particular also leading to the ultracontractivity and hypercontractivity properties of the corresponding Markovian semigroups. Due to its importance in many applications, this subject has been intensively investigated, see e.g.
\cite{Ad79, AC79, Ros76, Wei78, Tos97, SZ92}, to mention only very few papers. It is related to uncertainty principles \cite{Bec95} and to
Poincar\'e inequalities \cite{Bec89}, with the 
Poincar\'e and logarithmic Sobolev inequalities having different forms \cite{Gen14} and applications, see e.g. \cite{AS94}. We can also refer to the recent work on the
Poincar\'e inequality on filiform type stratified groups \cite{CFZ21}.

In this paper, among other things, we prove the logarithmic Gross inequality on stratified groups, with the gradient $\nabla u$ in \eqref{EQ:Gross} replaced by the horizontal gradient $\nabla_H u$, collecting vector fields from the first stratum. Moreover, {\bf very curiously}, our Gross inequality \eqref{gaus.log.sobi} comes with the Gaussian measure on the first stratum. This appears to be a natural extension of the Euclidean case since such inequality turns out to be equivalent to the $L^2$-log-Sobolev inequality with the horizontal gradient, see Theorem \ref{equiv.thm}.

An interesting result in this perspective is \cite[Theorem 6.3]{HZ09} where Hebisch and Zegarlinski have shown that on stratified groups for probability measures $\mu=e^{-\beta d^p}/Z\, d\lambda$, $\beta\geq 0$, $p\geq 1$, corresponding to smooth homogeneous norms $d(x)$, there is no $L^q$-log-Sobolev inequalities of the type \eqref{EQ:Gross}, for $q\in (1,2]$. Furthermore, they went on to use the heat kernel on the Heisenberg group $\mathbb{H}^n$ to construct a probability measure $\mu$ (with a singular norm) on $\mathbb{H}^n$ such that the $L^2$-log-Sobolev inequality of the Gross type \eqref{gaus.log.sobi} holds on $\mathbb{H}^n$. Our result in Theorem \ref{semi-g} gives such measure on general stratified groups with the probability component on the first stratum, while Theorem \ref{equiv.thm} shows that one can not do better if one wants to start with the inequality
\eqref{cor.log.sobi}, which is clearly true independently of any further choice of measure.

There is an extensive literature on conditions for log-Sobolev inequalities to hold, such as Bakry-Emery and other criteria. Such analysis is not part of this work where we do not work with full probability measures.

As already mentioned, the log-Sobolev inequalities have many applications. For example, in \cite{KRS20} the authors applied the obtained log-Sobolev inequalities for the fractional Laplacian on homogeneous groups, to show blow-up results for some classes of nonlinear PDEs. Applications of the current results are still possible and will be addressed in the sequel.

The methods of this paper allow us to also derive new results for the Euclidean space $\mathbb R^n$, yielding weighted Gross type log-Sobolev inequalities for weights given in terms of arbitrary homogeneous quasi-norms. To give an example, let us consider the family of Euclidean norms on $\mathbb R^n$ given by
$|x|_p=(|x_1|^p+\cdots+|x_n|^p)^{1/p}$ for $1\leq p<\infty$, and by $|x|_\infty=\max\limits_{1\leq j\leq n}|x_j|$, for $x\in\mathbb R^n.$ 
Let $0\leq \beta <2<n$. 
Let $\mu_p$ be the Gaussian measure on $\mathbb{R}^{n}$ given by 
$d\mu_p=\gamma_p e^{-\frac{|x|_2^2}{2}}dx$, $1\leq p\leq\infty$, with the normalisation constant $\gamma_p$ given in \eqref{k-w-Rn}, depending on $p$.
Then we have the following weighted Gross-type log-Sobolev inequality:
\begin{equation}
    \label{gaus.log.sob-w-Rn-i}
    \int_{\mathbb R^n}|x|_p^{-\frac{\beta(n-2)}{n-\beta}}|g|^2\log\left(|x|_p^{-\frac{\beta(n-2)}{2(n-\beta)}}|g|\right)\,d\mu_p \leq \int_{\mathbb R^n} |\nabla g|^2\,d\mu_p,\quad 
    0\leq \beta <2<n,
\end{equation}
for all $g$ such that $\||x|_p^{-\frac{\beta(n-2)}{2(n-\beta)}}g\|_{L^2(\mu_p)}=1.$
We note that for $\beta=0$ this implies the classical Gross inequality \eqref{EQ:Gross}, and we refer to Corollary \ref{semi-g-w-Rn} for further details. 
The case of $\beta=2$ in \eqref{gaus.log.sob-w-Rn-i} corresponds to the logarithmic Hardy inequality and requires very different methods from those developed in this paper; this case will be addressed in \cite{CKR21}. For a discussion of weights in inequalities of the type \eqref{ddlogin}, but only with one weight, we can refer to \cite{Das21}.

The case $p=1$ of the log-Sobolev inequalities in this paper corresponds to the Shannon type inequalities; we will not discuss it here but refer to \cite{CKR21a} for this case in the setting of general homogeneous groups, where it was handled using rather different methods than those in this paper.

To guide the reader through different inequalities in this paper, let us briefly summarise the obtained results. We formulate some of them in a simplified form here, with the full statements given in the sequel of this paper.

\begin{itemize}
\item {\bf Logarithmic inequalities on general Lie groups.}
We will establish several versions of log-Sobolev inequalities on general Lie groups. The only (non-technical) assumption that we impose is that the group is connected, {\em otherwise it can be compact or noncompact, unimodular or non-unimodular, and can have polynomial or exponential volume growth at infinity. }
We refer to Section \ref{SEC:generalLie} for the exact setup and statements, but let us give an example. Namely, let $\G$ be a connected Lie group with a H\"ormander system of vector fields, of local dimension $d$. Then for any $1<p<\infty$ and $0<a<\frac{d}{p}$ we have the log-Sobolev inequality
     \begin{equation}\label{sobinnon1-i}
         \int_{\mathbb{G}}\frac{|u|^{p}}{\|u\|^{p}_{L^{p}(\lambda)}}\log\left(\frac{|u|^{p}}{\|u\|^{p}_{L^{p}(\lambda)}}\right) d\lambda(x) \leq  \frac{d}{ap}\log\left(C\frac{\|u\|^{p}_{L^{p}_{a}(\lambda)}}{\|u\|^{p}_{L^{p}(\lambda)}}\right),
     \end{equation}
where $\lambda$ is the left Haar measure on $\G$, and $L^p_a$
is the Sobolev space associated to the corresponding sub-Laplacian (with drift). In fact, \eqref{sobinnon1-i} is a special case (without weight) of a family of weighted log-Sobolev inequalities given in Theorem \ref{loghardysob}. To give an example, if $q>p>1$, $0\leq \beta < d$ and $0<a<d$  are such that $\frac{1}{p}-\frac{1}{q}=\frac{a}{d}-\frac{\beta}{dq}$, then we have
     \begin{multline*}\label{harsobinnon1-i}
         \int_{\mathbb{G}}\frac{|x|_{CC}^{-\frac{\beta p}{q}}|u|^{p}}{\||x|_{CC}^{-\frac{\beta}{q}} u\|^{p}_{L^{p}(\lambda)}}\log\left(\frac{|x|_{CC}^{-\frac{\beta p}{q}}|u|^{p}}{\||x|_{CC}^{-\frac{\beta}{q}} u\|^{p}_{L^{p}(\lambda)}}\right) d\lambda(x)  \leq \frac{d-\beta}{ap-\beta}\log\left(C\frac{\|u\|^{p}_{L^{p}_{a}(\lambda)}}{\||x|_{CC}^{-\frac{\beta}{q}} u\|^{p}_{L^{p}(\lambda)}}\right),
     \end{multline*}
where  $|x|_{CC}=d_{CC}(e,x)$ is the Carnot-Carath\'{e}odory distance between $x$ and the identity element $e$ of $\G$.  As one can see, \eqref{sobinnon1-i} is a special case of this inequality with $\beta=0$.
The inequality \eqref{sobinnon1-i} for $a\geq \frac{d}{p}$ will be also given in Theorem \ref{logsobcor}. Finally we note that this last estimate also holds true with the Carnot-Carath\'eodory distance replaced by the Riemannian distance, see Remark \ref{REM:CCR}.
    \item {\bf Logarithmic inequalities on graded groups.}
    We obtain several versions of log-Sobolev (weighted and unweighted), log-Gagliardo-Nirenberg and log-Caffarelli-Kohn-Nirenberg inequalities on graded groups, let us give one example. Let $\mathbb{G}$ be a graded group with homogeneous dimension $Q$, and let $\R$ be a positive Rockland operator of homogeneous degree $\nu$. Let $1<p<\infty$ and $0<a<\frac{Q}{p}.$ Then we have
\begin{equation}\label{a2=0i}
\int_{\mathbb{G}}\frac{|u|^{p}}{\|u\|^{p}_{L^{p}(\mathbb{G})}}\log\left(\frac{|u|^{p}}{\|u\|^{p}_{L^{p}(\mathbb{G})}}\right)dx\leq \frac{Q}{ap}\log\left(A\frac{\|u\|^p_{\dot{L}^p_a(\G)}}{\|u\|^{p}_{L^{p}(\G)}}\right),
\end{equation}
with $\|u\|_{\dot{L}^p_a(\G)}\equiv
\|\R^\frac{a}{\nu}u\|_{{L}^{p}(\G)}$,
for a constant $A$ that can be related to the best constants in the Gagliardo-Nirenberg inequalities, see \eqref{CinfGN}.
This inequality \eqref{a2=0i} can be seen as a refinement of 
\eqref{sobinnon1-i}, not only with respect to a better control over a constant under the logarithm, but also because one has the homogeneous Sobolev norm on the right hand side of \eqref{a2=0i}.
\item {\bf Log-Sobolev and Gross inequalities on stratified groups.}
Let $\G$ be a stratified Lie group with homogeneous dimension $Q$. As a special case of the obtained inequalities on graded groups, we have the  log-Sobolev inequality 
\begin{equation}
    \label{cor.log.sobi}
    \int_{\mathbb{G}}|u|^2 \log|u|\,dx \leq \frac{Q}{4}\log \left(A \int_{\mathbb{G}}|\nabla_{H}u|^2\,dx \right)\,,
\end{equation}
for every $u$ such that $\|u\|_{L^2(\G)}=1$, 
and where $\nabla_H$ is the horizontal gradient, and the constant $A$ is related to the best constants in the Gagliardo-Nirenberg inequalities, see 
\eqref{Cinf}.
Furthermore, let $n_1$ be the dimension of the first stratum of the Lie algebra of $\G$, and let us write $x \in \G$ as $x=(x',x'') \in \mathbb{R}^{n_1} \times \mathbb{R}^{n-n_1}$, where $n$ is the topological dimension of $\G$. Then the following Gross-type ``semi-Gaussian'' log-Sobolev inequality is satisfied
\begin{equation}
    \label{gaus.log.sobi}
    \int_{\G}|g|^2\log|g|\,d\mu \leq \int_{\G} |\nabla_{H}g|^2\,d\mu\,,
\end{equation}
for any $g$ such that $\|g\|_{L^2(\mu)}=1,$
 where $\mu=\mu_1 \otimes \mu_2$, and $\mu_1$ is the Gaussian measure on $\mathbb{R}^{n_1}$ given by $d\mu_1=\gamma e^{-\frac{|x'|^2}{2}}dx'$, for $x'\in \mathbb{R}^{n_1}$ and $|x'|$ being its Euclidean norm, where
$\gamma:=(4^{-1}Q e^{\frac{2n_1}{Q}{-1}}A)^{Q/2},$
 and $\mu_2$ is the Lebesgue measure $dx''$ on $\mathbb{R}^{n-n_1}$.
While both of \eqref{cor.log.sobi} and \eqref{gaus.log.sobi} hold true, they are also {\bf equivalent} to each other, in the sense that one inequality can be shown to imply the other, see Theorem \ref{equiv.thm}. This equivalence is well-known in the Euclidean space, and here it can be also viewed as a justification for considering the Gaussian measure on the first stratum of $\G$. Since in the Euclidean space the first stratum is the whole space, these results recover the well-known Euclidean results as well, with
\eqref{gaus.log.sobi} boiling down to
\eqref{EQ:Gross} in the case of $\mathbb{R}^n$, see the discussion before Theorem \ref{semi-g}.

\item {\bf Weighted Gross log-Sobolev inequalities.} 
The weighted log-Sobolev inequalities in Theorem \ref{wsobingrthm} give the following weighted analogue of \eqref{cor.log.sobi}: if
$\G$ is a stratified Lie group with homogeneous dimension $Q$, $|\cdot|$ is an arbitrary homogeneous quasi-norm on $\G$, and $0\leq \beta <2<Q$, then there exists $C>0$ such that the following weighted log-Sobolev inequality holds:
  \begin{equation}\label{harsobingr-2-i}
         \int_{\mathbb{G}}{|x|^{-\frac{\beta(Q-2)}{Q-\beta}}|u|^{2}}\log\left({|x|^{-\frac{\beta(Q-2)}{2(Q-\beta)}}|u|}\right) dx \leq \frac{Q-\beta}{2(2-\beta)}\log\left(C\int_\G |\nabla_H u|^{2} dx\right),
     \end{equation}
for every $u$ such that $\||x|^{-\frac{\beta(Q-2)}{2(Q-\beta)}}u\|_{L^2(\G)}=1$.
Consequently, \eqref{harsobingr-2-i} implies the following weighted version of the Gross log-Sobolev inequality:
Let $\G$ be a stratified group with homogeneous dimension $Q$ and let $|\cdot|$ be an arbitrary homogeneous quasi-norm on $\G$. Then for any $0\leq \beta <2<Q$ we have the following weighted ``semi-Gaussian'' log-Sobolev inequality
\begin{equation}
    \label{gaus.log.sob-w-i}
    \int_{\G}|x|^{-\frac{\beta(Q-2)}{Q-\beta}}|g|^2\log\left(|x|^{-\frac{\beta(Q-2)}{2(Q-\beta)}}|g|\right)\,d\mu \leq \int_{\G} |\nabla_{H}g|^2\,d\mu\,,
\end{equation}
for all $g$ such that $\||x|^{-\frac{\beta(Q-2)}{2(Q-\beta)}}g\|_{L^2(\mu)}=1,$
 where $\mu=\mu_1 \otimes \mu_2$, with $\mu_1$ the Gaussian measure on the first stratum of $\G$ given by $d\mu_1=\gamma e^{-\frac{|x'|^2}{2}}dx'$ with $|x'|$ being the Euclidean norm of $x'$, 
 and $\mu_2$ is the Lebesgue measure $dx''$ on the other strata. In particular, we get \eqref{gaus.log.sobi} for $\beta=0$. We refer to 
 Theorem \ref{semi-g-w} for more explanations and for the precise value of $\gamma.$ If $\G=\mathbb R^n$ is the Euclidean space, this yields a new family of weighted Gross type log-Sobolev inequalities \eqref{gaus.log.sob-w-Rn-i} on $\mathbb R^n.$

\item {\bf Nash inequality.}
It is well-known in Euclidean spaces that the $L^2$-log-Sobolev inequality is equivalent to the Nash inequality. We show that this remains true also in more generality. 
Let $\G$ be a graded group with homogeneous dimension $Q$ and let $\mathcal{R}$ be a positive Rockland operator of homogeneous degree $\nu$. Then, we show an extended Nash inequality
 \begin{equation}\label{Nash-final2i}
 \|u\|_{L^2(\G)}^{2+\frac{4a}{Q}} \leq A_2 \|u\|_{L^1(\G)}^{\frac{4a}{Q}}\|u\|^{2}_{\dot{L}^2_a(\G)}\,.
 \end{equation}
 or, equivalently, 
\begin{equation}
    \label{Nash.gi}
    \left[\int_{\G} |u|^2\,dx \right]^{{1+\frac{2a}{Q}}} \leq A_2 \left[ \int_{\G}|u|\,dx\right]^{\frac{4a}{Q}}
    \int_{\G} |\mathcal{R}^{\frac{a}{\nu}}u|^2\,dx \,,
\end{equation}
for all $0<a<\frac{Q}{2}$, with constant $A_{2}$ related to the best constants in the Gagliardo-Nirenberg inequalities, see Theorem \ref{THM:Nash}.
In particular, if $\G$ is a stratified Lie group, then for $a=1$ this implies 
\begin{equation}
    \label{Nasi}
    \|u\|_{L^2(\G)}^{2+\frac{4}{Q}} \leq A_{2}
    \|u\|_{L^1(\G)}^{\frac{4}{Q}}
    \|\nabla_H u\|_{L^2(\G)}^2,
\end{equation}
with the horizontal gradient $\nabla_H.$ For the Euclidean $\G=\mathbb R^n$  this is the full gradient, yielding as a special case of $a=1$ the original Nash inequality in \cite{Nas58} as well as, again for $a=1$, the Nash inequality on stratified groups in \cite{VSCC93}.

As a simple immediate application, in Corollary \ref{cor:par}
we show that if $\Delta_\G$ is a sub-Laplacian on a stratified group $\G$ with $Q\geq 3$, and $u_0\in L^1(\G)\cap L^2(\G)$ is non-negative $u_0\geq 0$, then the solution $u$  to the heat equation 
\begin{equation}
    \label{heat.eqi}
    \partial_t u=\Delta_\G u,\quad u(0,x)=u_0(x),
\end{equation}
satisfies the time-decay estimate
\begin{equation}
    \label{heatesti}
    \|u(t,\cdot)\|_{L^2(\G)}\leq \left( \|u_0\|_{L^2(\G)}^{-\frac{4}{Q}}
    +\frac{4}{QA_{2}}\|u_0\|_{L^1(\G)}^{-\frac{4}{Q}}t\right)^{-\frac{Q}{4}},
\end{equation}
for all $t\geq 0,$ where $A_{2}$ is a constant in \eqref{Nash.gi}.

Finally, in Theorem \ref{THM:Nash-gen} we show Nash's inequality as well as its weighted versions on general connected Lie groups. Namely, if $d$ is a local dimension of $\G$, and $0\leq \beta <2a<d$, 
then the weighted Nash inequality is given by 
 \begin{equation}\label{Nash-final2th-i}
\||\cdot|_{CC}^{-\frac{\beta(d-2a)}{2(d-\beta)}}u\|_{L^2(\lambda)}^{2+\frac{2(2a-\beta)}{d-\beta}} \leq C \||\cdot|_{CC}^{-\frac{\beta(d-2a)}{2(d-\beta)}}u\|_{L^1(\lambda)}^{\frac{2(2a-\beta)}{d-\beta}}\|u\|^{2}_{L^2_a(\lambda)},
 \end{equation}
 where $\lambda$ is the left Haar measure on $\G$ and $|x|_{CC}=d_{CC}(e,x)$ is the Carnot-Carath\'{e}odory distance between $x$ and $e$ (but it can be also replaced by the Riemannian distance, see Remark \ref{REM:CCR}).

 In particular, for $\beta=0$, we obtain the Nash inequality on general connected Lie groups in the following form:
  \begin{equation}\label{Nash-final2th0-i}
 \|u\|_{L^2(\lambda)}^{2+\frac{4a}{d}} \leq C \|u\|_{L^1(\lambda)}^{\frac{4a}{d}}\|u\|^{2}_{L^2_a(\lambda)},\quad 0<a<\frac{d}{2}.
 \end{equation}
 For $a\geq\frac{d}{2}$, we also get the weighted and unweighted versions of the Nash inequality, in particular, for any $q>2$, we have the Nash type inequality
  \begin{equation}\label{Nash-final2th0-i2}
     \|u\|^{2+\frac{2(q-2)}{q}}_{L^{2}(\lambda)}\leq C\|u\|^{\frac{2(q-2)}{q}}_{L^{1}(\lambda)}\|u\|^{2}_{L^{2}_{a}(\lambda)}, \quad a\geq\frac{d}{2}.
 \end{equation}

\end{itemize}

\section{Preliminaries}
In  t=his  section,  we  briefly  recall  definitions and  main  properties  of  the homogeneous, graded and stratified groups.
The comprehensive analysis on such groups has been initiated in the works of Folland and Stein \cite{FS}, but in our exposition below we follow a more recent presentation in the open access book \cite{FR16}. 

\subsection{Homogeneous groups} In this sub-section, we give definitions and main properties of homogeneous Lie group.
\begin{defn}[\cite{FS, FR16}, Homogeneous group]
A Lie group (on $\mathbb{R}^{N}$) $\mathbb{G}$ with the dilation
$$D_{\lambda}(x):=(\lambda^{\nu_{1}}x_{1},\ldots,\lambda^{\nu_{N}}x_{N}),\; \nu_{1},\ldots, \nu_{n}>0,\; D_{\lambda}:\mathbb{R}^{N}\rightarrow\mathbb{R}^{N},$$
which is an automorphism of the group $\mathbb{G}$ for each $\lambda>0,$
is called a {\em homogeneous (Lie) group}.
\end{defn}
For simplicity,  in this paper we use the notation $\lambda x$ for the dilation $D_{\lambda}(x)$.
We
denote 
\begin{equation}
Q:=\nu_{1}+\ldots+\nu_{N},
\end{equation}
the homogeneous dimension of a homogeneous group $\mathbb{G}$. 
Let $dx$ denote the Haar measure on $\mathbb{G}$ and let $|S|$ denote the corresponding volume of a measurable set $S\subset \mathbb{G}$.
Then we have
\begin{equation}\label{scal}
|D_{\lambda}(S)|=\lambda^{Q}|S| \quad {\rm and}\quad \int_{\mathbb{G}}f(\lambda x)
dx=\lambda^{-Q}\int_{\mathbb{G}}f(x)dx.
\end{equation}
We also note that from \cite[Proposition 1.6.6]{FR16}, the standard Lebesgue measure $dx$ on $\mathbb{R}^{N}$ is the Haar measure on $\G$. 
%Then we have the following widely used property in this paper,  see e.g. \cite[p. 19]{RS19}:
%Let $\mathbb{G}$ be a homogeneous Lie group with homogeneous dimension $Q$, $r>0$, and let $dx$ be a Haar measure. Then, we have
%$$d(rx)=r^{Q}dx.$$
\begin{defn}[{\cite[Definition 3.1.33]{FR16} or \cite[Definition 1.2.1]{RS19}}]\label{quasi-norm}
For any homogeneous group $\mathbb{G}$ there exist  homogeneous quasi-norms, which are  continuous non-negative functions
\begin{equation}
\mathbb{G}\ni x\mapsto |x|\in[0,\infty),
\end{equation}
with the properties

\begin{itemize}
\item[a)] $|x|=|x^{-1}|$ for all $x\in\mathbb{G}$,
\item[b)] $|\lambda x|=\lambda|x|$ for all $x\in \mathbb{G}$ and $\lambda>0$,
\item[c)] $|x|=0$ if and only if $x=0$.
\end{itemize}
\end{defn}
They are quasi-norms since the triangle inequality may be satisfied with a constant different from $1.$
%Moreover, the following polarisation formula on homogeneous Lie groups will be used in our proofs, as established by Folland and Stein \cite{FS}.
%\begin{prop}[e.g. {\cite[Proposition 3.1.42]{FR16}}]
%Let $\mathbb{G}$ be a homogeneous Lie group and %$\mathfrak{S}:=\{x\in \mathbb{G}:\,|x|=1\},$ be the unit sphere %with respect to the homogeneous quasi-norm $|\cdot|.$ Then there %is a unique Radon measure $\sigma$ on $\mathfrak{S}$ such that for %all $f\in L^{1}(\mathbb{G}),$ we have
%\begin{equation}\label{EQ:polar}
%\int_{\mathbb{G}}f(x)dx=\int_{0}^{\infty}
%\int_{\mathfrak{S}}f(ry)r^{Q-1}d\sigma(y)dr.
%\end{equation}
%\end{prop}
\subsection{Graded Lie groups}\label{SEC:graded}
In this subsection, we make a brief summary of the basic definitions and properties of the graded Lie groups.  
\begin{defn}[e.g. {\cite[Definition 3.1.1]{FR16}}, graded Lie group and graded Lie algebra]
A Lie algebra $\mathfrak{g}$ is
called {\em graded} if it is endowed with a vector space decomposition (where all but
finitely many of the $V_{j}$'s are $0$)
\begin{equation}
\mathfrak{g}=\oplus_{j=1}^{\infty}V_{j},\,\,\,\,\text{s.t.}\,\,\,[V_{i},V_{j}]\subset V_{i+j}.
\end{equation}
Consequently, a Lie group is called {\em graded} if it is a connected and simply connected
Lie group whose Lie algebra is graded.
\end{defn}
Before introducing the Rockland operator, we recall the  Rockland condition.
By $\pi$ and $\widehat{\G}$ we denote a representation and the unitary dual of $\G$, respectively, and by $\mathcal{H}_{\pi}^{\infty}$ we denote the smooth vectors of the representation $\pi\in\widehat{\G}$. Let us recall the Rockland condition: 
\begin{defn}[Rockland condition, see e.g. {\cite[Definition 4.1.1]{FR16}}]
Let $A$ be a left-invariant differential operator on a Lie group $\G$.
Then $A$ satisfies the {\em Rockland condition} when

(Rockland condition) for each representation $\pi\in\widehat{\G}$, except for the trivial representation,
the operator $\pi(A)$ is injective on $\mathcal{H}_{\pi}^{\infty}$, that is,
\begin{equation}
\forall v \in\mathcal{H}_{\pi}^{\infty},\,\,\, \pi(A)v =0 \Rightarrow v = 0.
\end{equation}
\end{defn}
\begin{defn}[Rockland operator, see e.g. {\cite[Definition 4.1.2]{FR16}}]\label{intRockland}
Let $\G$ be a homogeneous Lie group. A {\em Rockland operator} $\R$ on
$\G$ is a left-invariant differential operator which is homogeneous of positive degree
and satisfies the Rockland condition.
\end{defn}
Let us recall a property which  connects   homogeneous Lie groups and Rockland operators.
\begin{prop}[e.g. {\cite[Proposition 4.1.3]{FR16}}]\label{exishomgr}
Let $\G$ be a homogeneous Lie group. If there exists a Rockland operator on $\G$ then $\G$ is a graded.
\end{prop}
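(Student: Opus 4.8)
The plan is to reduce the statement to the commensurability of the dilation weights and then to read the grading off the eigenspace decomposition of the dilation generator. The dilations of $\G$ are $D_\lambda=\exp((\log\lambda)\mathcal A)$ for a diagonalisable derivation $\mathcal A$ of the Lie algebra $\mathfrak g$ whose eigenvalues are the positive weights $\nu_1,\dots,\nu_N$. Decomposing $\mathfrak g=\bigoplus_t W_t$ into the eigenspaces of $\mathcal A$ (over the distinct eigenvalues $t>0$), the derivation property $\mathcal A[X,Y]=[\mathcal A X,Y]+[X,\mathcal A Y]$ gives immediately $[W_s,W_t]\subseteq W_{s+t}$; thus $\mathfrak g$ already carries a grading indexed by the positive reals. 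Hence it suffices to prove that the weights are rationally commensurable: if there is $c>0$ with $t/c\in\mathbb N$ for every eigenvalue $t$, then setting $V_j:=W_{jc}$ yields $\mathfrak g=\bigoplus_{j=1}^\infty V_j$ with $[V_i,V_j]\subseteq V_{i+j}$, which is exactly a grading, and $\G$ is graded.

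The key idea is to test the Rockland condition against the one-dimensional representations of $\G$. Since $\G$ is nilpotent and simply connected, its characters are the unitary representations $\chi_\lambda$ whose derived representation is $i\lambda$ for $\lambda\in(\mathfrak g/[\mathfrak g,\mathfrak g])^\ast$; here $\chi_\lambda(\R)$ is the scalar obtained by substituting $\chi_\lambda(X_j)=i\langle\lambda,\overline{X_j}\rangle$ into the polynomial expression $\R=\sum_\alpha c_\alpha X^\alpha$ (a substitution that is unambiguous because the $\chi_\lambda(X_j)$ are commuting scalars). As $\mathcal A$ preserves $[\mathfrak g,\mathfrak g]$, I would pick an $\mathcal A$-invariant complement $V$ of $[\mathfrak g,\mathfrak g]$ and a weight-vector basis $X_1,\dots,X_k$ of $V$; the images $\overline{X_1},\dots,\overline{X_k}$ form a basis of $\mathfrak g/[\mathfrak g,\mathfrak g]$. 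For each such generator $X_j$ I take the character $\lambda$ supported on the $j$-th coordinate only. It is nontrivial, so the Rockland condition forces $\chi_\lambda(\R)\neq0$; but on this one-dimensional slice only monomials in $X_j$ alone survive, and a monomial $X_j^{m}$ is homogeneous of degree $m\nu_j$, so it can contribute to the degree-$\nu$ operator $\R$ only when $m\nu_j=\nu$. Non-vanishing of $\chi_\lambda(\R)$ therefore requires such an $m$ to exist, i.e. $\nu/\nu_j\in\mathbb N$. Consequently every generator weight satisfies $\nu_j=\nu/m_j$ with $m_j\in\mathbb N$, so the generator weights are mutually commensurable.

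Finally I would propagate commensurability from the generators to all of $\mathfrak g$. Since $\G$ is nilpotent, the complement $V$ generates $\mathfrak g$ as a Lie algebra, so $\mathfrak g$ is spanned by iterated brackets of the weight vectors $X_1,\dots,X_k$; each such bracket is again a weight vector whose weight is the sum of the corresponding generator weights. Hence every eigenvalue of $\mathcal A$ is an $\mathbb N$-linear combination of the $\nu_j=\nu/m_j$. Taking $c=\nu/M$ with $M=\mathrm{lcm}(m_1,\dots,m_k)$, every weight is a positive integer multiple of $c$, which is precisely the commensurability needed in the first paragraph; the reduction there then completes the proof.

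I expect the step requiring the most care to be the second one: justifying that it is legitimate to restrict attention to the one-dimensional representations (rather than all of $\widehat\G$) and that injectivity of $\chi_\lambda(\R)$ on the one-dimensional space $\mathcal H_{\chi_\lambda}^\infty\cong\mathbb C$ is correctly identified with non-vanishing of the scalar symbol, together with the book-keeping that guarantees only the pure power $X_j^{\nu/\nu_j}$ can contribute on a single generator axis. The remaining structural facts—$\mathcal A$-invariance of $[\mathfrak g,\mathfrak g]$, the generation of $\mathfrak g$ by a complement of $[\mathfrak g,\mathfrak g]$, and the additivity of weights under brackets—are standard for nilpotent Lie algebras and carry no real difficulty.
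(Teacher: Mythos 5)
The paper does not prove this statement: it is quoted verbatim from \cite[Proposition 4.1.3]{FR16}, so there is no in-paper argument to compare against. Your proof is correct and is essentially the standard argument given in that reference (going back to Miller and ter Elst--Robinson): test the Rockland condition on the nontrivial one-dimensional representations $\chi_\lambda$ supported on single weight-directions of a dilation-invariant complement of $[\mathfrak g,\mathfrak g]$, deduce that each generator weight $\nu_j$ divides the homogeneity degree $\nu$ in the sense $\nu/\nu_j\in\mathbb N$, propagate commensurability to all weights via iterated brackets, and rescale the eigenspace decomposition of the dilation generator into an integer grading.
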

Similarly to the \cite{RTY20}, in this paper we will not be using the representation theoretic interpretation of these operators, so we define Rockland operators as \textit{left-invariant homogeneous hypoelliptic differential operators on $\G$.}

Let $\mathcal{R}$ be a Rockland operator of homogeneous degree $\nu$. Let us recall the definitions of the homogeneous and inhomogeneous Sobolev spaces on graded Lie groups, respectively, endowed with norms:
\begin{equation}
    \|f\|_{\dot{L}^p_{a}(\G)}:=\|\R^{\frac{a}{\nu}}f\|_{L^{p}(\G)},
\end{equation}
and
\begin{equation}
    \|f\|_{L^p_{a}(\G)}:=(\|f\|^{p}_{L^{p}(\G)}+\|\R^{\frac{a}{\nu}}f\|^{p}_{L^{p}(\G)})^{\frac{1}{p}}.
\end{equation}
 
It was shown in \cite{FR16} that while these norms depend on the choice of $\mathcal{R}$, the Sobolev spaces do not. In this paper, we  use notation  $L^{p}_{a_{1},a_{2}}(\G)=\dot{L}^{p}_{a_{1}}(\G)\cap \dot{L}^{p}_{a_{2}}(\G)$. 
\subsection{Stratified Lie groups}\label{defstr}
In this section, we recall the  definition of stratified groups (homogeneous Carnot groups) and their basic properties. Let us briefly recall the definition of the stratified Lie group. We refer to e.g. \cite{BLU07}, \cite{FR16} and \cite{RS19} for further discussions in this direction. 
\begin{defn} \label{maindef}
A Lie group $\mathbb{G}=(\mathbb{R}^{n},\circ)$ is called a stratified Lie group if it satisfies the following assumptions:

(a) For some natural numbers $n_{1}+...+n_{r}=n$
the decomposition $\mathbb{R}^{n}=\mathbb{R}^{n_{1}}\times...\times\mathbb{R}^{n_{r}}$ is valid, and
for every $\lambda>0$ the dilation $\delta_{\lambda}: \mathbb{R}^{n}\rightarrow \mathbb{R}^{n}$
given by
$$\delta_{\lambda}(x)\equiv\delta_{\lambda}(x^{(1)},...,x^{(r)}):=(\lambda x^{(1)},...,\lambda^{r}x^{(r)})$$
is an automorphism of the group $\mathbb{G}.$ Here $x^{(k)}\in \mathbb{R}^{n_{k}}$ for $k=1,...,r.$

(b) Let $n_{1}$ be as in (a) and let $X_{1},...,X_{n_{1}}$ be the left invariant vector fields on $\mathbb{G}$ such that
$X_{k}(0)=\frac{\partial}{\partial x_{k}}|_{0}$ for $k=1,...,n_{1}.$ Then
$${\rm rank}({\rm Lie}\{X_{1},...,X_{n_{1}}\})=n,$$
for every $x\in\mathbb{R}^{n},$ i.e. the iterated commutators
of $X_{1},...,X_{n_{1}}$ span the Lie algebra of $\mathbb{G}.$
\end{defn}

A graded Lie algebra $\mathfrak{g}$ is  stratified if $V_{1}$ generates $\mathfrak{g}$ as an algebra. In this case, if $\mathfrak{g}$ is
nilpotent of step $m$ we have
\begin{equation}
\mathfrak{g}=\oplus_{j=1}^{\infty}V_{j},\,\,\,\,\text{s.t.}\,\,\,[V_{i},V_{1}]\subset V_{i+1},
\end{equation}
and the natural dilations $\mathfrak{g}$ are given by
\begin{equation}
    D_{r}\left(\sum_{k=1}^{m}X_{k}\right)=\sum_{k=1}^{m}r^{k}X_{k},\,\,\,\,(X_{k}\in V_{k}).
\end{equation}
Consequently, a Lie group is  stratified if it is connected and simply-connected
Lie group whose Lie algebra is stratified.

\section{Logarithmic H\"older inequalities}

The aim of this section is to show logarithmic H\"{o}lder's inequality on general  measure spaces which will be instrumental in our proof of other logarithmic inequalities. For completeness, we first record a useful auxiliary consequence of the H\"older inequality.

\begin{lem}\label{weightedholder}
Let $\X$ be a  measure space. Suppose that $1<p\leq r\leq q\leq\infty$, $a\in[0,1]$,  $u\in L^{p}(\mathbb{X})\cap L^{q}(\mathbb{X})$ with
\begin{equation}
\frac{1}{r}=\frac{a}{p}+\frac{1-a}{q}.
\end{equation}
Then we have
\begin{equation}\label{intin}
\|u\|_{L^{r}(\mathbb{X})}\leq \|u\|^{a}_{L^{p}(\mathbb{X})}\|u\|^{1-a}_{L^{q}(\mathbb{X})}.
\end{equation}
\end{lem}
\begin{proof}
From H\"{o}lder's inequality we get
\begin{equation*}
\begin{split}
\|u\|^{r}_{L^{r}(\mathbb{X})}&=\int_{\mathbb{X}}|u(x)|^{r}dx=\int_{\mathbb{X}}(|u(x)|)^{ar}(|u(x)|)^{(1-a)r}dx
\\&\leq\left(\int_{\mathbb{X}}|u(x)|^{p}dx\right)^{\frac{ar}{p}}\left(\int_{\mathbb{X}}|u(x)|^{q}dx\right)^{\frac{(1-a)r}{q}}
\\&=\|u\|^{ar}_{L^{p}(\mathbb{X})}\|u\|^{(1-a)r}_{L^{q}(\mathbb{X})},
\end{split}
\end{equation*}
with
$
\frac{ar}{p}+\frac{(1-a)r}{q}=1,
$
implying \eqref{intin}.
\end{proof}
The inequality in Lemma \ref{weightedholder} is sometimes called Littlewood's inequality, see \cite[Theorem 5.5.1 (ii), p. 55]{Gar07}. We thank Y. Sarantopoulos for drawing our attention to this reference.

Now let us show what we can call a logarithmic H\"{o}lder's inequality on general measure spaces. A weighted version of this inequality on homogeneous groups was obtained in \cite[Lemma 3.2]{KRS20}. We now give its general version on measure spaces.
\begin{lem}[Logarithmic H\"older inequality]\label{holder}
Let $\X$ be a  measure space. Let $u\in L^{p}(\mathbb{X})\cap L^{q}(\mathbb{X})\setminus\{0\}$ with some $1<p<q< \infty.$ 
Then we have
\begin{equation}\label{holdernn}
\int_{\mathbb{X}}\frac{|u|^{p}}{\|u\|^{p}_{L^{p}(\mathbb{X})}}\log\left(\frac{|u|^{p}}{\|u\|^{p}_{L^{p}(\mathbb{X})}}\right)dx\leq \frac{q}{q-p}\log\left(\frac{\|u\|^{p}_{L^{q}(\mathbb{X})}}{\|u\|^{p}_{L^{p}(\mathbb{X})}}\right).
\end{equation}
\end{lem}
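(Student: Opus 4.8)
The plan is to reduce \eqref{holdernn} to a single application of Jensen's inequality for the concave function $\log$, taken with respect to the probability measure naturally attached to $u$. No interpolation (Lemma \ref{weightedholder}) is actually needed.

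First I would normalise. Set $A:=\|u\|_{L^{p}(\X)}^{p}$ and $f:=|u|^{p}/A$, so that $\int_{\X}f\,dx=1$ and $d\nu:=f\,dx$ is a probability measure on $\X$; the left-hand side of \eqref{holdernn} is then exactly $\int_{\X}\log f\,d\nu$. Before invoking Jensen I would record that this integral is well defined in $[-\infty,\infty)$. Using the elementary bound $\log t\le\frac{p}{q-p}\big(t^{(q-p)/p}-1\big)$ (which follows from $\log s\le s-1$ with $s=t^{(q-p)/p}$), one gets the pointwise estimate $(f\log f)_{+}\le\frac{p}{q-p}f^{q/p}$. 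Since $\int_{\X}f^{q/p}\,dx=A^{-q/p}\|u\|_{L^{q}(\X)}^{q}<\infty$ — this is where the hypothesis $u\in L^{q}(\X)$ enters — the positive part of the integrand is integrable, so $\int_{\X}\log f\,d\nu$ has a well-defined value in $[-\infty,\infty)$ and any finite upper bound for it is meaningful.

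The core step is to write $\log f=\frac{p}{q-p}\log\!\big(f^{(q-p)/p}\big)$ and apply Jensen's inequality to the concave function $\log$ against $\nu$:
\begin{equation*}
\int_{\X}\log f\,d\nu=\frac{p}{q-p}\int_{\X}\log\!\big(f^{(q-p)/p}\big)\,d\nu\le\frac{p}{q-p}\log\!\left(\int_{\X}f^{(q-p)/p}\,d\nu\right).
\end{equation*}
The exponent $(q-p)/p$ is chosen precisely so that the remaining integral collapses, since $f^{(q-p)/p}\cdot f=f^{q/p}$:
\begin{equation*}
\int_{\X}f^{(q-p)/p}\,d\nu=\int_{\X}f^{q/p}\,dx=A^{-q/p}\|u\|_{L^{q}(\X)}^{q}.
\end{equation*}

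Finally I would substitute $A=\|u\|_{L^{p}(\X)}^{p}$ and simplify. The argument of the logarithm becomes $\|u\|_{L^{q}(\X)}^{q}/\|u\|_{L^{p}(\X)}^{q}=\big(\|u\|_{L^{q}(\X)}^{p}/\|u\|_{L^{p}(\X)}^{p}\big)^{q/p}$, and the prefactor $\frac{p}{q-p}$ combines with the exponent $q/p$ to give exactly $\frac{q}{q-p}$, yielding \eqref{holdernn}. I do not expect any genuine obstacle: the only points requiring care are the correct bookkeeping of exponents in the Jensen step and the preliminary check that the left-hand side is not $+\infty$, both of which are dispatched by the elementary inequalities above.
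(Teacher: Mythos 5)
Your proof is correct, and it takes a genuinely different route from the one in the paper. The paper's argument introduces the function $F(1/r)=\log\|u\|_{L^r(\X)}$, derives its convexity in $1/r$ from the interpolation inequality of Lemma \ref{weightedholder}, differentiates $F(r)=r\log\int_\X|u|^{1/r}\,dx$ in $r$, and then uses the secant-line inequality $F'(r)\geq\frac{F(r')-F(r)}{r'-r}$ with $r=1/p$, $r'=1/q$ to extract the entropy term from the derivative. Your argument bypasses all of this: after normalising to the probability measure $d\nu=f\,dx$ with $f=|u|^p/\|u\|_{L^p(\X)}^p$, a single application of Jensen's inequality to the concave function $\log$ applied to $f^{(q-p)/p}$ collapses directly to $\int_\X f^{q/p}\,dx$, and the exponent bookkeeping gives exactly the constant $\frac{q}{q-p}$. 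The two approaches are of course cousins --- Jensen's inequality is what ultimately drives the convexity of $r\mapsto F(1/r)$ --- but yours is shorter, avoids any appeal to Lemma \ref{weightedholder}, and sidesteps the differentiation of $r\mapsto\log\int_\X|u(x)|^{1/r}\,dx$ under the integral sign, which the paper carries out without justification. Your preliminary verification that the positive part of $f\log f$ is integrable (via $\log t\le\frac{p}{q-p}(t^{(q-p)/p}-1)$ and $u\in L^q(\X)$) is a point of rigour that the paper's proof does not address at all; it is exactly what is needed to make the left-hand side of \eqref{holdernn} a well-defined element of $[-\infty,\infty)$ so that the stated upper bound is meaningful.
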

\begin{proof}
We will prove inequality  \eqref{holdernn} for $u$ being a simple function. Then, using the density of the simple functions in the $L^p(\mathbb{X}) \cap L^{q}(\mathbb{X})$ space, we get the desired result, since the constant in \eqref{holdernn} depends only on $p$ and $q$. To begin with, let us consider the following function for $u$ being a simple function on $\mathbb{X}$
\begin{equation}\label{1/r}
F\left(\frac{1}{r}\right)=\log\left(\|u\|_{L^{r}(\mathbb{X})}\right). 
\end{equation}
By using Lemma \ref{weightedholder}, we have
\begin{equation}\label{convex11}
 F\left(\frac{1}{r}\right)\leq aF\left(\frac{1}{p}\right)+(1-a)F\left(\frac{1}{q}\right),
\end{equation}
with $a\in[0,1]$ and $\frac{1}{r}=\frac{a}{p}+\frac{1-a}{q}$.
Also, from \eqref{1/r} we have
\begin{equation}\label{convex}
F(r)=r\log\int_{\mathbb{X}}|u(x)|^{\frac{1}{r}}dx,
\end{equation}
and by taking the derivative of \eqref{convex}, we get
\begin{multline}
F'(r)=\log\int_{\mathbb{X}}|u(x)|^{\frac{1}{r}}dx+r\left(\log\int_{\mathbb{X}}|u(x)|^{\frac{1}{r}}dx\right)'_{r}\\
=\log\int_{\mathbb{X}}|u(x)|^{\frac{1}{r}}dx-\frac{1}{r}\frac{\int_{\mathbb{X}}|u(x)|^{\frac{1}{r}}\log |u(x)|dx}{\int_{\mathbb{X}}|u(x)|^{\frac{1}{r}}dx}.
\end{multline}
We know from \eqref{convex11} that $F(r)$ is convex,  hence, we have
\begin{equation}
F'(r)\geq\frac{F(r')-F(r)}{r'-r},\,\,\,\,r>r'>0.
\end{equation}
and by taking $r=\frac{1}{p}$ and $r'=\frac{1}{q}$, we get
\begin{equation}\label{convex1}
p\frac{\int_{\mathbb{X}}|u|^{p}\log |u|dx}{\int_{\mathbb{X}}|u(x)|^{p}dx}-\log\int_{\mathbb{X}}|u(x)|^{p}dx\leq \frac{qp}{q-p}\log\left(\frac{\|u\|_{L^{q}(\mathbb{X})}}{\|u\|_{L^{p}(\mathbb{X})}}\right).
\end{equation}
Finally, from this last fact, we establish logarithmic H\"{o}lder's inequality
\begin{equation*}
    \begin{split}
p\frac{\int_{\mathbb{X}}|u|^{p}\log |u|dx}{\int_{\mathbb{X}}|u(x)|^{p}dx}&-\log\int_{\mathbb{X}}|u(x)|^{p}dx=p\frac{\int_{\mathbb{X}}|u|^{p}\log |u|dx}{\int_{\mathbb{X}}|u(x)|^{p}dx}-\frac{\int_{\mathbb{X}}|u(x)|^{p}\log\| u\|^{p}_{L^{p}(\mathbb{G})}dx}{\int_{\mathbb{X}}|u(x)|^{p}dx}\\&
=\frac{\int_{\mathbb{X}}|u|^{p}\log |u|^{p}dx}{\int_{\mathbb{X}}|u(x)|^{p}dx}-\frac{\int_{\mathbb{X}}|u(x)|^{p}\log\| u\|^{p}_{L^{p}(\mathbb{X})}dx}{\int_{\mathbb{X}}|u(x)|^{p}dx}\\&
=\int_{\mathbb{X}}\frac{|u|^{p}}{\|u\|^{p}_{L^{p}(\mathbb{X})}}\log\left(\frac{|u|^{p}}{\|u\|^{p}_{L^{p}(\mathbb{X})}}\right) dx\\&
\leq \frac{q}{q-p}\log\left(\frac{\|u\|^{p}_{L^{q}(\mathbb{X})}}{\|u\|^{p}_{L^{p}(\mathbb{X})}}\right),
    \end{split}
\end{equation*}
implying \eqref{holdernn}.
\end{proof}

\section{Logarithmic inequalities on graded groups}
\label{SEC:3}
In this section we show analogues of   logarithmic  inequalities on the graded groups. First we recall a few facts that will be of importance later.

%First, let us recall the logarithmic H\"{o}lder's inequality that will be instrumental in our proofs.
%\begin{lem}[Lemma 3.2 in \cite{KRS20}]
%Let $\mathbb{G}$ be a homogeneous Lie group. Assume that %$|x|^{\alpha}u\in L^{p}(\mathbb{G})\cap L^{q}(\mathbb{G})$ %with some $\alpha\in\mathbb{R},\,\,1<p<q\leq \infty.$ Then %we have
%\begin{equation}\label{holder}
%\int_{\mathbb{G}}\frac{(|x|^{\alpha %p}|u|^{p})}{\||x|^{\alpha}u\|^{p}_{L^{p}(\mathbb{G})}}\log\%left(\frac{|x|^{\alpha %p}|u|^{p}}{\||x|^{\alpha}u\|^{p}_{L^{p}(\mathbb{G})}}\right%)dx\leq \frac{q}{q-p}\log\left(\frac{\||x|^{\alpha}u\|^{p}_%{L^{q}(\mathbb{G})}}{\||x|^{\alpha}u\|^{p}_{L^{p}(\mathbb{G%})}}\right).
%\end{equation}
%\end{lem}

Let us consider the following Schr\"{o}dinger equation,
\begin{equation}\label{problem1}
    \R_{1}^{\frac{a_{1}}{\nu_{1}}}(|\R_{1}^{\frac{a_{1}}{\nu_{1}}}u|^{p-2}\R_{1}^{\frac{a_{1}}{\nu_{1}}}u)+\R_{2}^{\frac{a_{2}}{\nu_{2}}}(|\R_{2}^{\frac{a_{2}}{\nu_{2}}}u|^{p-2}\R_{2}^{\frac{a_{2}}{\nu_{2}}}u)=|u|^{q-2}u,\,\,u\in L^{p}_{a_{1},a_{2}}(\G),
\end{equation}
where $a_{1}>a_{2}\geq 0$, $1<p<\frac{Q}{a_{1}}$, $\frac{Qp}{Q-a_{2}p}<q<\frac{Qp}{Q-a_{1}p}$, and $L^{p}_{a_{1},a_{2}}(\G)=\dot{L}^{p}_{a_{1}}(\G)\cap \dot{L}^{p}_{a_{2}}(\G).$
Let us denote the energy  functional $I:L^{p}_{a_{1},a_{2}}(\G)\rightarrow \mathbb{R}$ and the Nehari functional $J:L^{p}_{a_{1},a_{2}}(\G)\rightarrow \mathbb{R}$  acting on $L^{p}_{a_{1},a_{2}}(\G)\cap L^{q}(\G)$ in the following forms:
\begin{equation}
    I(u):=\frac{1}{p}\int_{\G}|\R_{1}^{\frac{a_{1}}{\nu_{1}}}u(x)|^{p}dx+\frac{1}{p}\int_{\G}|\R_{2}^{\frac{a_{2}}{\nu_{2}}}u(x)|^{p}dx-\frac{1}{q}\int_{\G}|u(x)|^{q}dx,
\end{equation}
and 
\begin{equation}
    J(u):=\int_{\G}|\R_{1}^{\frac{a_{1}}{\nu_{1}}}u(x)|^{p}dx+\int_{\G}|\R_{2}^{\frac{a_{2}}{\nu_{2}}}u(x)|^{p}dx-\int_{\G}|u(x)|^{q}dx.
\end{equation}
Let us introduce the Nehari manifold 
\begin{equation}
    \mathcal{N}:=\{u\in L^{p}_{a_{1},a_{2}}(\G)\setminus \{0\}:J(u)=0\},
\end{equation}
and we put
\begin{equation}\label{d}
    d_0:=\inf \{I(u): u\in \mathcal{N}\}.
\end{equation}
The above notions are related by the following theorem.
\begin{thm}[\cite{RTY20}, Theorem 4.3] \label{THM:RTYb}
Let $a_1>a_2\geq0$, $1<p<\frac{Q}{a_{1}}$ and $\frac{pQ}{Q-a_{2}p} <q< \frac{pQ}{Q-a_{1}p}$. Then, the 
Schr\"{o}dinger type equation \eqref{problem1} has a least energy solution $\varphi \in L^{p}_{a_1 ,a_2} (\G)$. Moreover, we have $d_0 = I(\varphi)$.
\end{thm}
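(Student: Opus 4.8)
The plan is to obtain $\varphi$ as a minimizer of the energy $I$ over the Nehari manifold $\mathcal{N}$, and then to show by a Lagrange multiplier argument that such a minimizer is automatically a weak, hence least-energy, solution of \eqref{problem1}. Throughout I abbreviate $N(u):=\|\R_1^{\frac{a_1}{\nu_1}}u\|_{L^p(\G)}^p+\|\R_2^{\frac{a_2}{\nu_2}}u\|_{L^p(\G)}^p$ and $K(u):=\|u\|_{L^q(\G)}^q$, so that $J(u)=N(u)-K(u)$ and $I(u)=\frac1p N(u)-\frac1q K(u)$.

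First I would record the structural facts that make $\mathcal{N}$ a good constraint. For fixed $u\neq 0$ the fibering map $t\mapsto J(tu)=t^pN(u)-t^qK(u)$ has, because $q>p$, a unique positive zero $t(u)$; hence every ray through the origin meets $\mathcal{N}$ exactly once and $\mathcal{N}\neq\emptyset$. On $\mathcal{N}$ one has $N(u)=K(u)$, so
\[
I(u)=\left(\tfrac1p-\tfrac1q\right)N(u)=\left(\tfrac1p-\tfrac1q\right)K(u)\geq 0,
\]
and $I|_{\mathcal{N}}$ reduces to a positive multiple of either norm quantity. Combining $N(u)=K(u)$ with the Sobolev embedding $L^p_{a_1,a_2}(\G)\hookrightarrow L^q(\G)$ --- valid precisely because $q$ lies strictly between the two critical exponents $\frac{pQ}{Q-a_2p}$ and $\frac{pQ}{Q-a_1p}$ --- gives $N(u)=K(u)\lesssim N(u)^{q/p}$ on $\mathcal{N}$, which, since $q>p$, forces a uniform positive lower bound on $N(u)$ and hence $d_0>0$.

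Next I would take a minimizing sequence $u_k\in\mathcal{N}$ with $I(u_k)\to d_0$. Since $I(u_k)=\left(\tfrac1p-\tfrac1q\right)N(u_k)$, the sequence is bounded in the reflexive space $L^p_{a_1,a_2}(\G)$, so after passing to a subsequence $u_k\rightharpoonup\varphi$ weakly. \textbf{The main obstacle is the loss of compactness} of the embedding into $L^q(\G)$ on the non-compact group $\G$: a minimizing sequence may a priori lose mass by spreading out, by splitting, by translating off to infinity, or by rescaling. I would control these via the concentration-compactness principle applied to the measures $|u_k|^q\,dx$. Vanishing is excluded by the lower bound $K(u_k)\gtrsim 1$ from the previous step; dichotomy is ruled out by a Brezis--Lieb splitting argument, which shows that any nontrivial splitting of the $L^q$ mass would raise the energy strictly above $d_0$; escape by translation is neutralized using the \emph{left-invariance} of both $N$ and $K$ under the group action, which lets one recenter each $u_k$ without altering $I(u_k)$ or the constraint. \emph{Crucially}, escape by dilation is excluded by the two-scale structure of $N$: since $a_1\neq a_2$, its two homogeneous summands scale with different powers of the dilation parameter, so no sequence can keep $N(u_k)$ bounded and $K(u_k)$ bounded below while running to either end of the dilation group. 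After recentering one thus obtains strong convergence $u_k\to\varphi$ in $L^q(\G)$, with $\varphi\neq 0$.

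Finally I would pass to the limit. Weak lower semicontinuity of the homogeneous Sobolev norms gives $N(\varphi)\leq\liminf_k N(u_k)$, while strong $L^q$ convergence gives $K(\varphi)=\lim_k K(u_k)$; together with $N(u_k)=K(u_k)$ this yields $J(\varphi)\leq 0$. If $J(\varphi)<0$, then the scaling factor $t(\varphi)<1$ would place $t(\varphi)\varphi\in\mathcal{N}$ with $I(t(\varphi)\varphi)<\left(\tfrac1p-\tfrac1q\right)N(\varphi)\leq d_0$, contradicting the definition of $d_0$; hence $\varphi\in\mathcal{N}$ and $I(\varphi)=d_0$. Since $q>p$ one has $\langle J'(\varphi),\varphi\rangle=(p-q)K(\varphi)\neq 0$, so $\mathcal{N}$ is a natural constraint and the Lagrange multiplier rule gives $I'(\varphi)=\mu J'(\varphi)$ for some $\mu$. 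Pairing with $\varphi$ and using $\langle I'(\varphi),\varphi\rangle=J(\varphi)=0$ forces $\mu=0$; therefore $I'(\varphi)=0$, so $\varphi$ is a least-energy weak solution of \eqref{problem1} with $d_0=I(\varphi)$.
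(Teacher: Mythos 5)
The paper does not prove Theorem \ref{THM:RTYb}: it is quoted, statement only, from \cite{RTY20} (Theorem 4.3 there), so there is no internal argument to compare yours against. Taken on its own terms, your plan is the standard --- and, as far as the cited source is concerned, essentially the intended --- route: minimise $I$ over the Nehari manifold, use the Gagliardo--Nirenberg embedding $L^{p}_{a_1,a_2}(\G)\hookrightarrow L^{q}(\G)$ (valid exactly because $q$ sits strictly between the two critical exponents) to get $d_0>0$, recover compactness of a minimising sequence, and conclude via the natural-constraint computation, which you carry out correctly: $\langle J'(\varphi),\varphi\rangle=(p-q)K(\varphi)\neq 0$ forces the multiplier $\mu=0$, hence $I'(\varphi)=0$, and since every nontrivial solution of \eqref{problem1} lies on $\mathcal{N}$, the minimiser is automatically of least energy. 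The one place where your write-up is a sketch rather than a proof is the compactness step: ``dichotomy is ruled out by a Brezis--Lieb splitting argument'' and ``escape by dilation is excluded by the two-scale structure of $N$'' are correct in spirit, but they are precisely where the technical substance of the argument lives (a Lions-type vanishing lemma adapted to graded groups, recentering of the minimising sequence by left translations, and the verification that a nontrivial splitting of the $L^q$ mass strictly raises the constrained energy). If this were meant as a complete proof those two sentences would need to be expanded into actual lemmas; as an outline of the strategy it is sound and consistent with the approach of the cited reference.
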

The Sobolev embeddings on graded Lie groups have been obtained in \cite{FR17}, but here we recall its version with the best constant, which is related to the quantities described in Theorem \ref{THM:RTYb}.

\begin{thm}[\cite{RTY20},  Sobolev inequality]\label{sob} Let $\mathbb{G}$ be a graded group with homogeneous dimension $Q$ and let $\R$ be a positive Rockland operator of homogeneous degree $\nu$.
Let $a>0$, $p\in\left(1,\frac{Q}{a}\right)$ and $p<q<\frac{Qp}{Q-ap}$. Then we have 
\begin{equation}\label{sobin1}
    \left(\int_{\G}|u(x)|^{q}dx\right)^{\frac{p}{q}} \leq C\int_{\G}\left(|\Ra u(x)|^{p}+|u(x)|^{p}\right)dx,
\end{equation}
for any $u\in L^{p}_{a}(\G)$.

Moreover, let $\phi$ be a least energy solution of \eqref{problem1} (with $a=a_{1}$, $\mathcal{R}_{1}=\mathcal{R}$ and $a_{2}=0$) and let $C_{S,\mathcal{R}}$ be the smallest positive constant $C$ in \eqref{sobin1}. Then we have 
  \begin{equation}\label{bestconsob}
  C_{S,\R}=\left(\frac{apq}{apq-Q(q-p)}\int_{\G}|\phi(x)|^{p}dx\right)^{\frac{p-q}{q}}=\left(\frac{pq}{q-p}d_0\right)^{\frac{p-q}{q}},
  \end{equation}
  where $d_0$ is defined in \eqref{d}.
\end{thm}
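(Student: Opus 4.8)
The plan is to characterise the sharp constant $C_{S,\R}$ variationally and to identify it with the least energy level $d_0$ from \eqref{d}. Writing $\|u\|_{L^p_a(\G)}^p=\int_\G\big(|\Ra u|^p+|u|^p\big)\,dx$ for the inhomogeneous Sobolev norm, the smallest admissible constant in \eqref{sobin1} is
\[
C_{S,\R}=\sup_{u\neq 0}\frac{\|u\|_{L^q(\G)}^p}{\|u\|_{L^p_a(\G)}^p}=\frac1S,\qquad
S:=\inf_{u\neq 0}\frac{\|u\|_{L^p_a(\G)}^p}{\|u\|_{L^q(\G)}^p}.
\]
Finiteness of $C_{S,\R}$, i.e. $S>0$, is precisely the continuous Sobolev embedding $L^p_a(\G)\hookrightarrow L^q(\G)$ established in \cite{FR17}, which already yields the inequality \eqref{sobin1}; the real content is the explicit evaluation of $S$.

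First I would link $S$ to $d_0$ by the Nehari scaling. For fixed $u\neq0$ the map $t\mapsto J(tu)=t^p\|u\|_{L^p_a(\G)}^p-t^q\|u\|_{L^q(\G)}^q$ has a unique positive zero $t_u$, and since $J(t_u u)=0$ one computes $I(t_u u)=\frac{q-p}{pq}\,t_u^p\|u\|_{L^p_a(\G)}^p=\frac{q-p}{pq}\big(\|u\|_{L^p_a(\G)}^p/\|u\|_{L^q(\G)}^p\big)^{q/(q-p)}$. As every element of $\mathcal N$ is of this form, taking the infimum over $u$ gives $d_0=\frac{q-p}{pq}S^{q/(q-p)}$, hence $S=\big(\frac{pq}{q-p}d_0\big)^{(q-p)/q}$ and therefore $C_{S,\R}=S^{-1}=\big(\frac{pq}{q-p}d_0\big)^{(p-q)/q}$, which is the second equality in \eqref{bestconsob}.

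Next, Theorem \ref{THM:RTYb} (taking $a_1=a$, $\R_1=\R$, $a_2=0$, so that $\R_2^{a_2/\nu_2}=\mathrm{Id}$) supplies a least energy solution $\phi$ with $d_0=I(\phi)$, which is also the minimiser realising $S$. To reach the first expression in \eqref{bestconsob} I would combine two identities for $\phi$, written in terms of $A=\|\Ra\phi\|_{L^p(\G)}^p$, $B=\|\phi\|_{L^p(\G)}^p$, $C=\|\phi\|_{L^q(\G)}^q$. The Nehari identity $J(\phi)=0$ reads $A+B=C$. The second is a Pohozaev-type identity from dilation invariance: using \eqref{scal}, the homogeneity $\Ra(\phi\circ D_t)=t^{a}(\Ra\phi)\circ D_t$, and the criticality of $\phi$, one differentiates $t\mapsto I(\phi\circ D_t)$ at $t=1$ to obtain $\frac{Q-ap}{p}A+\frac{Q}{p}B=\frac{Q}{q}C$. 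Solving this $2\times2$ system gives $C=\frac{apq}{apq-Q(q-p)}B$, where the constraint $q<\frac{Qp}{Q-ap}$ makes the denominator positive. Since $A+B=C$ forces $d_0=I(\phi)=\frac{q-p}{pq}C$, i.e. $\frac{pq}{q-p}d_0=\|\phi\|_{L^q(\G)}^q=\frac{apq}{apq-Q(q-p)}\int_\G|\phi|^p\,dx$, substituting into the second equality of \eqref{bestconsob} produces the first.

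I expect the main obstacle to be the rigorous justification of the Pohozaev identity: one must verify that the dilation direction $\frac{d}{dt}(\phi\circ D_t)\big|_{t=1}$ is an admissible test function in $L^p_{a_1,a_2}(\G)$ and that the differentiation under the integral is legitimate, which needs enough regularity and decay of $\phi$. A secondary point is to confirm that the minimiser from Theorem \ref{THM:RTYb} actually attains $S$, so that its characterisations of $d_0$ via $\mathcal N$ and via the Sobolev quotient agree.
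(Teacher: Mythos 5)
This statement is recalled verbatim from \cite{RTY20}; the paper under review supplies no proof of its own, so the only comparison available is with the original source, which likewise argues variationally through the Nehari manifold (this is why Theorem \ref{THM:RTYb} is stated immediately before). Your reconstruction is correct and is, in substance, that argument. The second equality in \eqref{bestconsob} follows exactly as you say from the scaling identity $d_0=\frac{q-p}{pq}S^{q/(q-p)}$, and it is worth noting that this part does \emph{not} require the infimum $S$ to be attained --- it is an identity between two infima, valid as soon as the embedding of \cite{FR17} gives $S>0$. Attainment only enters in the first equality, where Theorem \ref{THM:RTYb} supplies $\phi$ with $I(\phi)=d_0$; your $2\times 2$ system (Nehari identity $A+B=C$ together with the dilation identity $\frac{Q-ap}{p}A+\frac{Q}{p}B=\frac{Q}{q}C$, whose coefficients I have checked against the homogeneities $\|\phi\circ D_t\|_{L^p}^p=t^{-Q}B$, $\|\Ra(\phi\circ D_t)\|_{L^p}^p=t^{ap-Q}A$, $\|\phi\circ D_t\|_{L^q}^q=t^{-Q}C$) correctly yields $C=\frac{apq}{apq-Q(q-p)}B$ and hence the stated formula, with the sign of the denominator guaranteed by $q<\frac{Qp}{Q-ap}$. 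The only genuine gap is the one you flag: the Pohozaev step uses $\frac{d}{dt}I(\phi\circ D_t)\big|_{t=1}=0$, which requires either that the dilation direction be an admissible variation for the critical point $\phi$ or, alternatively, can be bypassed entirely by observing that $t\mapsto I(s(t)\,\phi\circ D_t)$, with $s(t)$ the Nehari projection, attains its minimum over the fibre at $t=1$ because $\phi$ realises $d_0$; either way this is a standard technicality handled in \cite{RTY20} and does not affect the correctness of your outline. A cosmetic remark: you overload the letter $C$ for both the constant in \eqref{sobin1} and $\|\phi\|_{L^q(\G)}^q$, which should be disambiguated in a written version.
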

First we present the inhomogeneous logarithmic Sobolev inequality on graded groups $\mathbb{G},$ where the Sobolev norm in the estimate is inhomogeneous.  The homogeneous norm version will be given in Corollary \ref{COR:Lpspb123} as a consequence of the Gagliardo-Nirenberg inequalities.
%\begin{thm}
 %Let $\mathbb{G}$ be a graded group with homogeneous dimension $Q$.
%Let $a>0$, $p\in\left(1,\frac{Q}{a}\right)$ and %$p<q<\frac{Qp}{Q-ap}$. Then we have 
%\begin{equation}\label{LogSobolev}
%\int_{\mathbb{G}}\frac{|u|^{p}}{\|u\|^{p}_{L^{p}(\mathbb{G})}}\log\left(\frac{|u|^{p}}{\|u\|^{p}_{L^{p}(\mathbb{G})}}\right)dx
%\leq \frac{q}{q-p} \log\left(C_{S,\R}\frac{\|u\|^{p}_{L^{p}_{a}(\G)}}{\|u\|^{p}_{L^{p}(\mathbb{G})}}\right),
%\end{equation}
% for any  $u\in L^{p}_{a}(\G)$. 
%\end{thm}
%\begin{proof}
%From logarithmic H\"{o}lder's inequality \eqref{holder} with $\alpha=0$, we have
%\begin{equation}
%\int_{\mathbb{G}}\frac{|u|^{p}}{\|u\|^{p}_{L^{p}(\mathbb{G})}}\log\left(\frac{|u|^{p}}{\|u\|^{p}_{L^{p}(\mathbb{G})}}\right)dx\leq \frac{q}{q-p}\log\left(\frac{\|u\|^{p}_{L^{q}(\mathbb{G})}}{\|u\|^{p}_{L^{p}(\mathbb{G})}}\right).
%\end{equation}
%By the assumption we have $1\leq p<q=p^{*}=\frac{pQ}{Q-sp}$, that is,
%\begin{equation}
 %   \begin{split}
%\int_{\mathbb{G}}\frac{|u|^{p}}{\|u\|^{p}_{L^{p}(\mathbb{G})}}\log\left(\frac{|u|^{p}}{\|u\|^{p}_{L^{p}(\mathbb{G})}}\right)dx&\leq \frac{q}{q-p}\log\left(\frac{\|u\|^{p}_{L^{q}(\mathbb{G})}}{\|u\|^{p}_{L^{p}(\mathbb{G})}}\right)\\&
%\stackrel{\eqref{sobin1}}\leq \frac{q}{q-p} %\log\left(C_{S,\R}\frac{\|u\|^{p}_{L^{p}_{a}(\G)}}{\|u\|^{p}_{L^{p}(\mathbb{G})}}\right).        
 %   \end{split}
%\end{equation}

%\end{proof}

\begin{thm}
 Let $\mathbb{G}$ be a graded group with homogeneous dimension $Q$.
Let $1<p<\infty$ and $0<a<\frac{Q}{p}$. Then we have 
\begin{equation}\label{LogSobolev}
\int_{\mathbb{G}}\frac{|u|^{p}}{\|u\|^{p}_{L^{p}(\mathbb{G})}}\log\left(\frac{|u|^{p}}{\|u\|^{p}_{L^{p}(\mathbb{G})}}\right) dx \leq \frac{Q}{ap}\log\left(C_{p}\frac{\|u\|^{p}_{L^{p}_{a}(\G)}}{\|u\|^{p}_{L^{p}(\mathbb{G})}}\right)
\end{equation}
with the constant $C_{p}=\left( \frac{a}{Qd_0}\right)^{\frac{ap}{Q}}$ for $d_0$ as in \eqref{d}, for any non-trivial $u\in L^{p}_{a}(\G)$. 
\end{thm}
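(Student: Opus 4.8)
The plan is to combine the logarithmic H\"older inequality (Lemma~\ref{holder}) with the sharp Sobolev inequality (Theorem~\ref{sob}), treating the exponent $q$ as a free parameter and then sending it to the critical Sobolev exponent $q^{*}:=\frac{Qp}{Q-ap}$ in order to produce both the sharp prefactor $\frac{Q}{ap}$ and the explicit constant $C_{p}$. Note that $0<a<\frac{Q}{p}$ guarantees $q^{*}\in(p,\infty)$.

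First I would fix a non-trivial $u\in L^{p}_{a}(\G)$ and apply Lemma~\ref{holder} on $\X=\G$ with the Haar measure, for an arbitrary $q$ with $p<q<q^{*}$, which gives
\[
\int_{\mathbb{G}}\frac{|u|^{p}}{\|u\|^{p}_{L^{p}(\mathbb{G})}}\log\left(\frac{|u|^{p}}{\|u\|^{p}_{L^{p}(\mathbb{G})}}\right)dx\leq\frac{q}{q-p}\log\left(\frac{\|u\|^{p}_{L^{q}(\G)}}{\|u\|^{p}_{L^{p}(\mathbb{G})}}\right).
\]
Next I would insert the Sobolev inequality \eqref{sobin1} with its best constant: since $\|u\|^{p}_{L^{q}(\G)}\leq C_{S,\R}\,\|u\|^{p}_{L^{p}_{a}(\G)}$ with $C_{S,\R}=\left(\frac{pq}{q-p}d_0\right)^{\frac{p-q}{q}}$ by \eqref{bestconsob}, and $\log$ is increasing, the right-hand side is bounded by $\frac{q}{q-p}\log\!\left(C_{S,\R}\,\|u\|^{p}_{L^{p}_{a}(\G)}/\|u\|^{p}_{L^{p}(\mathbb{G})}\right)$. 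Splitting this logarithm and using $\frac{q}{q-p}\cdot\frac{p-q}{q}=-1$ rewrites the bound as
\[
-\log\!\left(\tfrac{pq}{q-p}\,d_0\right)+\frac{q}{q-p}\log\!\left(\frac{\|u\|^{p}_{L^{p}_{a}(\G)}}{\|u\|^{p}_{L^{p}(\mathbb{G})}}\right),
\]
which is a valid upper bound for every admissible $q$.

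The final step is to let $q\to q^{*}$. A direct computation shows $\frac{q}{q-p}\to\frac{Q}{ap}$ and $\frac{pq}{q-p}\to\frac{Q}{a}$, so the first term tends to $-\log\frac{Qd_0}{a}=\log\frac{a}{Qd_0}=\frac{Q}{ap}\log C_{p}$ with $C_{p}=\left(\frac{a}{Qd_0}\right)^{ap/Q}$, and recombining the two terms yields exactly \eqref{LogSobolev}. Since the left-hand side does not depend on $q$ while the bound holds for all $q<q^{*}$, passing to the limit (or taking the infimum over $q$ on the right) is legitimate.

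The step I expect to be the main obstacle is the justification of this limit at the critical exponent, since Theorem~\ref{sob} and the value \eqref{bestconsob} are available only for strictly subcritical $q$. Concretely, I would need $\|u\|_{L^{q}(\G)}\to\|u\|_{L^{q^{*}}(\G)}<\infty$, which follows from the endpoint embedding $L^{p}_{a}(\G)\hookrightarrow L^{q^{*}}(\G)$ together with dominated convergence, and I would need the quantity $d_0=d_0(q)$ from \eqref{d} to behave continuously as $q\to q^{*}$ so that $C_{S,\R}(q)\to C_{p}$. Keeping the constants uniform in $q$ near $q^{*}$ is the delicate point; alternatively one can avoid the endpoint entirely by retaining a subcritical $q$ and simply observing that the displayed limiting value still dominates the ($q$-independent) left-hand side.
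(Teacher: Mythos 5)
Your proposal is correct and follows essentially the same route as the paper: the logarithmic H\"older inequality, then the best-constant Sobolev inequality \eqref{sobin1}--\eqref{bestconsob}, then passage to the limit $q\to \frac{Qp}{Q-ap}$ of the resulting $q$-dependent bound. The paper treats the endpoint exactly as you anticipate, simply setting $C_{p}=\limsup_{q\to \frac{Qp}{Q-ap}}C_{S,\mathcal{R}}$, so your concern about the behaviour of $d_0$ (which a priori depends on $q$ through \eqref{problem1}) near the critical exponent applies equally to the paper's own argument and is not a defect specific to your proof.
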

\begin{proof}
From logarithmic H\"older inequality \eqref{holdernn} we have
\begin{equation}
\int_{\mathbb{G}}\frac{|u|^{p}}{\|u\|^{p}_{L^{p}(\mathbb{G})}}\log\left(\frac{|u|^{p}}{\|u\|^{p}_{L^{p}(\mathbb{G})}}\right)dx\leq \frac{q}{q-p}\log\left(\frac{\|u\|^{p}_{L^{q}(\mathbb{G})}}{\|u\|^{p}_{L^{p}(\mathbb{G})}}\right).
\end{equation}
By the assumption we have $1\leq p<q=p^{*}=\frac{pQ}{Q-ap}$, that is,
\begin{eqnarray}\label{log.sob.}
\int_{\mathbb{G}}\frac{|u|^{p}}{\|u\|^{p}_{L^{p}(\mathbb{G})}}\log\left(\frac{|u|^{p}}{\|u\|^{p}_{L^{p}(\mathbb{G})}}\right) dx & \leq & \frac{q}{q-p}\log\left(\frac{\|u\|^{p}_{L^{q}(\mathbb{G})}}{\|u\|^{p}_{L^{p}(\mathbb{G})}}\right)\nonumber\\
& \leq  & \frac{q}{q-p} \log\left(C_{S,\R}\frac{\|u\|^{p}_{L^{p}_{a}(\G)}}{\|u\|^{p}_{L^{p}(\mathbb{G})}}\right)\,,
\end{eqnarray}
where for the last inequality we have applied \eqref{sobin1}.
Notice that the parameter $q\in \left(p, \frac{Qp}{Q-ap} \right)$ does not appear at any norm involved in \eqref{log.sob.}, one can minimise the quantity $\frac{q}{q-p}$ over $q$ and get 
\[
\int_{\mathbb{G}}\frac{|u|^{p}}{\|u\|^{p}_{L^{p}(\mathbb{G})}}\log\left(\frac{|u|^{p}}{\|u\|^{p}_{L^{p}(\mathbb{G})}}\right) dx \leq \frac{Q}{ap}\log\left(C_{p}\frac{\|u\|^{p}_{L^{p}_{a}(\G)}}{\|u\|^{p}_{L^{p}(\mathbb{G})}}\right)\,,
\]
where the minimum has been achieved in the limit case where $q \rightarrow \frac{Qp}{Q-ap}$, and the constant $C_{p}$ is now given by 
\[
C_{p}=\limsup_{q \rightarrow \frac{Qp}{Q-ap}} C_{S,\R}=\left( \frac{a}{Qd_0}\right)^{\frac{ap}{Q}}\,,
\]
where the constant $C_{S,\R}$ is given by \eqref{bestconsob}.
\end{proof}

Let us now recall the Gagliardo-Nirenberg inequality on graded groups.

\begin{thm}[\cite{RTY20}, Gagliardo-Nirenberg inequality]\label{GN} Let $\mathbb{G}$ be a graded group with homogeneous dimension $Q$ and let $\R_{1}$ and $\R_{2}$ be positive Rockland operators of homogeneous degrees $\nu_{1}$ and $\nu_{2}$, respectively.
Let $a_{1}>a_{2}\geq 0$, $p\in\left(1,\frac{Q}{a_{1}}\right)$ and $\frac{Qp}{Q-a_{2}p}<q<\frac{Qp}{Q-a_{1}p}$. Then we have 

\begin{equation}\label{gnin1}
    \int_{\G}|u(x)|^{q}dx \leq C\left(\int_{\G}|\R_{1}^{\frac{a_{1}}{\nu_{1}}} u(x)|^{p}dx\right)^{\frac{Q(q-p)-a_{2}pq}{(a_{1}-a_{2})p^{2}}}\left(\int_{\G}|\R_{2}^{\frac{a_{2}}{\nu_{2}}} u(x)|^{p}dx\right)^{\frac{a_{1}pq-Q(q-p)}{(a_{1}-a_{2})p^{2}}},
\end{equation}
for any $u\in \dot{L}^{p}_{a_{1},a_{2}}(\G)$.
Moreover, let $\phi$ be a least energy solution of \eqref{problem1}  and let $C_{GN,\mathcal{R}_{1},\R_{2},a_{1},a_{2},p,q}$ be the  best constant $C$ in \eqref{sobin1}. Then we have 
  \begin{equation}\label{bestcongn}
  \begin{split}
      C_{GN,\mathcal{R}_{1},\R_{2},a_{1},a_{2},p,q}&=\frac{(a_{1}-a_{2})pq}{a_{1}pq-Q(q-p)}\left(\frac{Q(q-p)-a_{2}pq}{a_{1}pq-Q(q-p)}\right)^{\frac{a_{2}pq-Q(q-p)}{(a_{1}-a_{2})p^{2}}}\\&
      \times \left(\frac{a_{1}pq-Q(q-p)}{(a_{1}-a_{2})(q-p)}d_0\right)^{\frac{p-q}{p}},
  \end{split}
  \end{equation}
  where $d_0$ is defined in \eqref{d}.
\end{thm}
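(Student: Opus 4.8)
The plan is to derive the inequality \eqref{gnin1} together with its sharp constant \eqref{bestcongn} in one stroke by a variational argument, exploiting the classical link between the best Gagliardo--Nirenberg constant and the ground state of the Euler--Lagrange equation \eqref{problem1}. Write $\theta_1=\frac{Q(q-p)-a_2pq}{(a_1-a_2)p^2}$ and $\theta_2=\frac{a_1pq-Q(q-p)}{(a_1-a_2)p^2}$ for the two exponents appearing on the right of \eqref{gnin1}, and abbreviate $A_i(u):=\int_\G|\R_i^{\frac{a_i}{\nu_i}}u|^p\,dx$ and $B(u):=\int_\G|u|^q\,dx$. Then \eqref{gnin1} is the assertion that the quotient $W(u):=A_1(u)^{\theta_1}A_2(u)^{\theta_2}/B(u)$ is bounded below by a positive constant on $L^p_{a_1,a_2}(\G)\setminus\{0\}$, and the best constant in \eqref{gnin1} equals $C=(\inf_{u\ne0}W(u))^{-1}$. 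The ranges of $p,q$ guarantee $\theta_1,\theta_2>0$, and a short computation gives $\theta_1+\theta_2=q/p$; combining this with the homogeneity of $\R_i^{\frac{a_i}{\nu_i}}$ (which is homogeneous of degree $a_i$) and the dilation rule \eqref{scal}, one checks that $W$ is invariant both under amplitude scaling $u\mapsto cu$ and under the dilation $u\mapsto u\circ D_\lambda$. These two scaling invariances are what render the sharp problem accessible.

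First I would note that $\inf W>0$ (so that \emph{some} finite constant works): this follows by interpolating via Lemma \ref{weightedholder} between the two endpoint Sobolev embeddings \eqref{sobin1}, used at the exponents $\frac{Qp}{Q-a_2p}$ and $\frac{Qp}{Q-a_1p}$ that bracket $q$. With positivity of the infimum in hand, the argument can concentrate on sharpness. Using the two scaling invariances I would then reduce the free minimisation of $W$ to the constrained problem defining $d_0$ in \eqref{d}: amplitude- and dilation-normalising a minimising sequence places it, modulo these symmetries, on the Nehari manifold $\mathcal N$, so that $\inf_{u\ne0}W(u)$ is attained exactly at a rescaling of a minimiser of $I$ over $\mathcal N$. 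By Theorem \ref{THM:RTYb} such a minimiser exists and coincides with the least energy solution $\phi$ of \eqref{problem1}, with $d_0=I(\phi)$; hence the extremiser of \eqref{gnin1} is $\phi$ up to the scaling symmetries, and the task reduces to evaluating $W(\phi)$.

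To evaluate $W(\phi)$ I would extract two algebraic identities from $I'(\phi)=0$. Testing against $\phi$ itself (equivalently $\frac{d}{dt}I(t\phi)|_{t=1}=0$) yields the Nehari identity $A_1(\phi)+A_2(\phi)=B(\phi)$, while testing against the generator of dilations, $\frac{d}{d\lambda}I(\phi\circ D_\lambda)|_{\lambda=1}=0$, yields the Pohozaev-type identity $(a_1-\tfrac Qp)A_1(\phi)+(a_2-\tfrac Qp)A_2(\phi)+\tfrac Qq B(\phi)=0$. Solving this linear system produces $A_1(\phi)=\tfrac pq\theta_1 B(\phi)$ and $A_2(\phi)=\tfrac pq\theta_2 B(\phi)$, and feeding the Nehari identity into $I(\phi)=d_0$ gives $B(\phi)=\frac{pq}{q-p}d_0$. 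Substituting these into $C=B(\phi)/(A_1(\phi)^{\theta_1}A_2(\phi)^{\theta_2})$ and simplifying with $\theta_1+\theta_2=q/p$ reproduces the closed form \eqref{bestcongn}; this last step is routine algebra.

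The main obstacle is the reduction to the Nehari problem together with the actual production of an extremiser. Since $W$ is invariant under the noncompact groups of dilations $D_\lambda$ and of left translations, minimising sequences may lose compactness by spreading out or escaping to infinity, and this is precisely the analytic difficulty encapsulated in Theorem \ref{THM:RTYb}, whose proof on graded groups rests on a concentration--compactness analysis adapted to the anisotropic dilations. A secondary technical point is the rigorous justification of the Pohozaev identity, i.e.\ differentiating $I(\phi\circ D_\lambda)$ under the integral sign, which requires the regularity and decay of the ground state $\phi$ and should be verified rather than taken for granted.
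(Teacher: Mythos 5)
Note first that the paper does not prove this theorem at all: it is imported verbatim from \cite{RTY20} (as the theorem header indicates), so there is no internal proof to compare against. Your blind reconstruction is nevertheless sound and is essentially the argument of \cite{RTY20} itself: the best constant in \eqref{gnin1} is characterised variationally, linked to the least energy solution of \eqref{problem1} via the Nehari manifold, and then evaluated using the Nehari and Pohozaev identities. I checked your algebra: with $\theta_1=\frac{Q(q-p)-a_2pq}{(a_1-a_2)p^2}$, $\theta_2=\frac{a_1pq-Q(q-p)}{(a_1-a_2)p^2}$ one indeed has $\theta_1+\theta_2=q/p$, both are positive precisely on the stated range of $q$, the quotient $W$ is invariant under both scalings, the two identities give $A_i(\phi)=\frac{p\theta_i}{q}B(\phi)$ and $B(\phi)=\frac{pq}{q-p}d_0$, and substituting into $B\,A_1^{-\theta_1}A_2^{-\theta_2}$ reproduces \eqref{bestcongn} exactly (both expressions reduce to $(p/q)^{-q/p}\theta_1^{-\theta_1}\theta_2^{-\theta_2}B(\phi)^{1-q/p}$). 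Two minor caveats: the preliminary positivity of $\inf W$ via Lemma \ref{weightedholder} needs the \emph{homogeneous} endpoint embeddings $\dot{L}^p_{a_i}(\G)\hookrightarrow L^{Qp/(Q-a_ip)}(\G)$ from \cite{FR17}, not the inhomogeneous inequality \eqref{sobin1} quoted in this paper; and the passage from ``least energy solution'' to ``minimiser of the free quotient $W$'' requires the intermediate step of optimising the Sobolev-type quotient over dilations (which is where the Pohozaev identity certifies that $\phi$ itself, rather than some dilate of it, is extremal). You correctly flag the genuinely hard analytic content --- existence of the minimiser (concentration--compactness) and the rigorous justification of the Pohozaev identity --- and delegate the former to Theorem \ref{THM:RTYb}, which is exactly how the cited source proceeds.
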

We now have the following logarithmic  Gagliardo-Nirenberg inequality.
\begin{thm}\label{THM:GNlog}
Let $\mathbb{G}$ be a graded group with homogeneous dimension $Q$ and let $\R_{1}$ and $\R_{2}$ be positive Rockland operators of homogeneous degrees $\nu_{1}$ and $\nu_{2}$, respectively.
Let $a_{1}>a_{2}\geq 0$, $p\in\left(1,\frac{Q}{a_{1}}\right)$ and $\frac{Qp}{Q-a_{2}p}<q<\frac{Qp}{Q-a_{1}p}$. If $a_{2}\neq 0$, then for any $u\not=0$ we have 
\begin{multline}\label{LogGN1}
\int_{\mathbb{G}}\frac{|u|^{p}}{\|u\|^{p}_{L^{p}(\mathbb{G})}}\log\left(\frac{|u|^{p}}{\|u\|^{p}_{L^{p}(\mathbb{G})}}\right)dx \\ \leq \frac{q}{q-p}\log\left(C^{\frac{p}{q}}_{GN,\mathcal{R}_{1},\R_{2},a_{1},a_{2},p,q}\frac{\|u\|^{\frac{Q(q-p)-a_{2}pq}{(a_{1}-a_{2})q}}_{\dot{L}^{p}_{a_{1}}(\G)}\|u\|^{\frac{a_{1}pq-Q(q-p)}{(a_{1}-a_{2})q}}_{\dot{L}^{p}_{a_{2}}(\G)}}{\|u\|^{p}_{L^{p}(\mathbb{G})}}\right).
\end{multline}
\end{thm}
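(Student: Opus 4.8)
The plan is to mimic the proof of the inhomogeneous log-Sobolev inequality \eqref{LogSobolev} given above, replacing the role of the Sobolev embedding \eqref{sobin1} by the Gagliardo-Nirenberg inequality \eqref{gnin1}. First I would invoke the logarithmic H\"older inequality of Lemma \ref{holder} applied to the measure space $\X=\G$ with the Haar measure $dx$. Since $a_2\neq 0$ we have $q>\frac{Qp}{Q-a_2 p}>p>1$, so the hypothesis $1<p<q<\infty$ of Lemma \ref{holder} is satisfied, and it yields
\begin{equation*}
\int_{\mathbb{G}}\frac{|u|^{p}}{\|u\|^{p}_{L^{p}(\mathbb{G})}}\log\left(\frac{|u|^{p}}{\|u\|^{p}_{L^{p}(\mathbb{G})}}\right)dx\leq \frac{q}{q-p}\log\left(\frac{\|u\|^{p}_{L^{q}(\mathbb{G})}}{\|u\|^{p}_{L^{p}(\mathbb{G})}}\right).
\end{equation*}

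Next I would bound the numerator $\|u\|^{p}_{L^{q}(\G)}$ from above. Raising the Gagliardo-Nirenberg inequality \eqref{gnin1} to the power $p/q$ and recalling that $\int_{\G}|\R_i^{a_i/\nu_i}u|^{p}\,dx=\|u\|^{p}_{\dot{L}^{p}_{a_i}(\G)}$, I obtain
\begin{equation*}
\|u\|^{p}_{L^{q}(\G)}\leq C^{\frac{p}{q}}_{GN,\mathcal{R}_{1},\R_{2},a_{1},a_{2},p,q}\,\|u\|^{\frac{p}{q}\cdot\frac{Q(q-p)-a_{2}pq}{(a_{1}-a_{2})p}}_{\dot{L}^{p}_{a_{1}}(\G)}\,\|u\|^{\frac{p}{q}\cdot\frac{a_{1}pq-Q(q-p)}{(a_{1}-a_{2})p}}_{\dot{L}^{p}_{a_{2}}(\G)}.
\end{equation*}
The only genuine computation is the simplification of the two exponents, where the factor $p/q$ cancels one power of $p$ and introduces $q$ in the denominator:
\begin{equation*}
\frac{p}{q}\cdot\frac{Q(q-p)-a_{2}pq}{(a_{1}-a_{2})p}=\frac{Q(q-p)-a_{2}pq}{(a_{1}-a_{2})q},\qquad \frac{p}{q}\cdot\frac{a_{1}pq-Q(q-p)}{(a_{1}-a_{2})p}=\frac{a_{1}pq-Q(q-p)}{(a_{1}-a_{2})q},
\end{equation*}
which are precisely the exponents appearing in \eqref{LogGN1}.

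Finally, I would substitute this estimate for $\|u\|^{p}_{L^{q}(\G)}$ into the argument of the logarithm on the right-hand side of the logarithmic H\"older inequality, using the monotonicity of $\log$; this produces \eqref{LogGN1} directly. In contrast to the proof of \eqref{LogSobolev}, no optimization over $q$ is carried out, because here $q$ enters the exponents of the two homogeneous Sobolev norms on the right, so $q$ must be retained as a free parameter in the admissible range $\left(\frac{Qp}{Q-a_{2}p},\frac{Qp}{Q-a_{1}p}\right)$; the assumption $a_{2}\neq 0$ is exactly what keeps both $\|u\|_{\dot{L}^{p}_{a_{1}}(\G)}$ and $\|u\|_{\dot{L}^{p}_{a_{2}}(\G)}$ genuine homogeneous Sobolev norms and places us in the Gagliardo-Nirenberg regime rather than the Sobolev one. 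I do not expect any real obstacle here: the argument is a direct concatenation of Lemma \ref{holder} with Theorem \ref{GN}, and the only care required is the bookkeeping of the exponents and the verification that the range of $q$ guarantees $p<q$ so that Lemma \ref{holder} is applicable.
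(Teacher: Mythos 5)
Your proposal is correct and follows essentially the same route as the paper's own proof: both first invoke the logarithmic H\"older inequality of Lemma \ref{holder} (after noting that $a_2\neq 0$ forces $p<\frac{Qp}{Q-a_2p}<q$) and then substitute the Gagliardo--Nirenberg bound \eqref{gnin1}, raised to the power $p/q$, into the logarithm, arriving at exactly the exponents in \eqref{LogGN1}. Your bookkeeping of the exponents and your remark that no optimization over $q$ is performed here (unlike in the proof of \eqref{LogSobolev}) both match the paper.
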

\begin{proof}
Assume that $a_{2}\neq0$. From assumption $1<p<\frac{Q}{a_{1}}$, we have $1<p<\frac{Q}{a_{1}}<\frac{Q}{a_{2}}$. It means $p<\frac{Qp}{Q-a_{2}p}<q<\frac{Qp}{Q-a_{1}p}$, then $p<q$.  Using the  Gagliardo-Nirenberg inequality \eqref{gnin1} and the logarithmic H\"{o}lder inequality \eqref{holdernn},  we have
\begin{equation*}
\begin{split}
&\int_{\mathbb{G}}\frac{|u|^{p}}{\|u\|^{p}_{L^{p}(\mathbb{G})}}\log\left(\frac{|u|^{p}}{\|u\|^{p}_{L^{p}(\mathbb{G})}}\right)dx\leq \frac{q}{q-p}\log\left(\frac{\|u\|^{p}_{L^{q}(\mathbb{G})}}{\|u\|^{p}_{L^{p}(\G)}}\right)\\&
\leq\frac{q}{q-p}\log\left(C^{\frac{p}{q}}_{GN,\mathcal{R}_{1},\R_{2},a_{1},a_{2},p,q}\frac{\left(\int_{\G}|\R_{1}^{\frac{a_{1}}{\nu_{1}}} u(x)|^{p}dx\right)^{\frac{Q(q-p)-a_{2}pq}{(a_{1}-a_{2})pq}}\left(\int_{\G}|\R_{2}^{\frac{a_{2}}{\nu_{2}}} u(x)|^{p}dx\right)^{\frac{a_{1}pq-Q(q-p)}{(a_{1}-a_{2})pq}}}{\|u\|^{p}_{L^{p}(\mathbb{G})}}\right)\\&
=\frac{q}{q-p}\log\left(C^{\frac{p}{q}}_{GN,\mathcal{R}_{1},\R_{2},a_{1},a_{2},p,q}\frac{\|u\|^{\frac{Q(q-p)-a_{2}pq}{(a_{1}-a_{2})q}}_{\dot{L}^{p}_{a_{1}}(\G)}\|u\|^{\frac{a_{1}pq-Q(q-p)}{(a_{1}-a_{2})q}}_{\dot{L}^{p}_{a_{2}}(\G)}}{\|u\|^{p}_{L^{p}(\mathbb{G})}}\right).
\end{split}
\end{equation*}
The proof is complete.
\end{proof}

If $\R$ is a positive Rockland operators of homogeneous degrees $\nu$, then, as usual, we identify
$\|u\|_{\dot{L}^p_a(\G)}\equiv
\|\R^\frac{a}{\nu}u\|_{{L}^{p}(\G)}$.

\begin{cor}\label{COR:Lpspb123}
Let $\mathbb{G}$ be a graded group with homogeneous dimension $Q$ and let $\R$ be a positive Rockland operator of homogeneous degree $\nu$.
Let $1<p<\infty$ and $0<a<\frac{Q}{p}.$
%$1<p<\frac{Q}{a}$. 
Then for any $u\not=0$ we have
\begin{equation}\label{a2=0}
\int_{\mathbb{G}}\frac{|u|^{p}}{\|u\|^{p}_{L^{p}(\mathbb{G})}}\log\left(\frac{|u|^{p}}{\|u\|^{p}_{L^{p}(\mathbb{G})}}\right)dx\leq \frac{Q}{ap}\log\left(A\frac{\|u\|^p_{\dot{L}^p_a(\G)}}{\|u\|^{p}_{L^{p}(\G)}}\right),
\end{equation}
with $\|u\|_{\dot{L}^p_a(\G)}\equiv
\|\R^\frac{a}{\nu}u\|_{{L}^{p}(\G)}$,
for 
\begin{equation}\label{CinfGN}
A=\inf_{q:\, p<q<\frac{Qp}{Q-ap}}
(C_{GN,\mathcal{R},a ,p,q})^{\frac{ap^2}{Q(q-p)}}\,,
\end{equation}
where $C_{GN,\mathcal{R},a ,p,q}$ is the constant given in \eqref{bestcongn} for $a_1=a,a_2=0$.

In particular, for all $\|u\|_{L^p(\G)}=1$ we have 
\begin{equation}\label{a2=0u1}
\int_{\mathbb{G}} |u|^{p}\log |u| dx\leq \frac{Q}{ap^2}\log\left(A \int_\G |\R^{\frac{a}{\nu}}u(x)|^p dx \right).
\end{equation}

%In particular, if $\G$ is a stratified Lie group with homogeneous dimension $Q$, then the following Log-Sobolev inequality is satisfied
%\begin{equation}
%    \label{cor.log.sob-p}
%    \int_{\mathbb{G}}|u|^p \log|u|\,dx \leq \frac{Q}{p^2}\log \left(A \int_{\mathbb{G}}|\nabla_{H}u|^p\,dx \right)\,,
%\end{equation}
%for every $u$ such that $\|u\|_{L^p(\G)}=1$, where \begin{equation}\label{Cinf}A=\inf_{p<q<\frac{Qp}{Q-p}}(C_{GN, \Delta_\G,1,p,q})^{\frac{p^2}{Q(q-p)}}\,,
%\end{equation}
%where $C_{GN, \Delta_\G,1,p,q}$ is the constant given in \eqref{bestcongn} for $a_1=1,a_2=0$.
\end{cor}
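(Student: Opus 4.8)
The plan is to specialise the logarithmic Gagliardo--Nirenberg mechanism to the endpoint $a_2=0$, which is precisely the case excluded from Theorem \ref{THM:GNlog}. Since Theorem \ref{GN} itself permits $a_2\geq 0$, I would apply the plain Gagliardo--Nirenberg inequality \eqref{gnin1} with $a_1=a$ and $a_2=0$, so that $\R_2^{a_2/\nu_2}=\mathrm{Id}$ and the second factor $\int_\G|\R_2^{a_2/\nu_2}u|^p\,dx$ collapses to $\|u\|_{L^p(\G)}^p$. With these choices the exponents in \eqref{gnin1} become $\frac{Q(q-p)}{ap^2}$ on $\|u\|_{\dot L^p_a(\G)}^p$ and $\frac{apq-Q(q-p)}{ap^2}$ on $\|u\|_{L^p(\G)}^p$, producing a two-term estimate for $\|u\|_{L^q(\G)}^q$ in terms of the homogeneous Sobolev norm and the $L^p$ norm alone.

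First I would raise this Gagliardo--Nirenberg bound to the power $p/q$ and divide through by $\|u\|_{L^p(\G)}^p$. The crucial point is that the power of $\|u\|_{L^p(\G)}$ collapses cleanly: the resulting exponent $\frac{apq-Q(q-p)}{aq}-p=-\frac{Q(q-p)}{aq}$ matches, up to sign, the $\dot L^p_a$ exponent $\frac{Q(q-p)}{aq}$, so that
\[
\frac{\|u\|_{L^q(\G)}^p}{\|u\|_{L^p(\G)}^p}\leq (C_{GN,\R,a ,p,q})^{p/q}\left(\frac{\|u\|_{\dot L^p_a(\G)}}{\|u\|_{L^p(\G)}}\right)^{\frac{Q(q-p)}{aq}}.
\]
I would then feed this directly into the logarithmic H\"older inequality \eqref{holdernn}, whose right-hand side is exactly $\frac{q}{q-p}\log\big(\|u\|_{L^q(\G)}^p/\|u\|_{L^p(\G)}^p\big)$.

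Taking logarithms, the $u$-dependent term acquires the prefactor $\frac{q}{q-p}\cdot\frac{Q(q-p)}{aq}=\frac{Q}{a}$, which is precisely the coefficient of $\log\big(\|u\|_{\dot L^p_a(\G)}/\|u\|_{L^p(\G)}\big)$ obtained by expanding the target right-hand side $\frac{Q}{ap}\log\big(A\,\|u\|_{\dot L^p_a(\G)}^p/\|u\|_{L^p(\G)}^p\big)$; hence the two sides agree on the variable part. Matching the constant terms then gives $\frac{p}{q-p}\log C_{GN,\R,a ,p,q}=\frac{Q}{ap}\log A$, i.e. $A=(C_{GN,\R,a ,p,q})^{ap^2/(Q(q-p))}$. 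Since $q\in(p,\frac{Qp}{Q-ap})$ appears in none of the norms on either side of the resulting inequality, I am free to optimise, and taking the infimum over $q$ yields exactly the constant \eqref{CinfGN}. The special case \eqref{a2=0u1} then follows by imposing $\|u\|_{L^p(\G)}=1$, so that every normalising factor $\log\|u\|_{L^p(\G)}^p$ vanishes and the left-hand side reduces to $p\int_\G|u|^p\log|u|\,dx$; dividing by $p$ produces the stated form.

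The main obstacle I anticipate is bookkeeping rather than conceptual: the endpoint $a_2=0$ must be reached through Theorem \ref{GN} rather than Theorem \ref{THM:GNlog}, and I must check that the exact cancellation of the $\|u\|_{L^p(\G)}$ powers persists at $a_2=0$, equivalently that the identification $\dot L^p_0(\G)=L^p(\G)$ is consistent with the interpolation exponents. I would also confirm that the infimum defining $A$ is meaningful and that the behaviour of $C_{GN,\R,a,p,q}$ as $q\to\frac{Qp}{Q-ap}$ does not obstruct the optimisation; since the optimisation affects only the constant and the inequality already holds for each fixed admissible $q$, this last point does not threaten validity.
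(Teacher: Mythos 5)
Your proposal is correct and follows essentially the same route as the paper: apply the Gagliardo--Nirenberg inequality \eqref{gnin1} with $a_1=a$, $a_2=0$ inside the logarithmic H\"older bound \eqref{holdernn}, observe the exact cancellation of the $\|u\|_{L^p(\G)}$ powers leaving the exponent $\tfrac{Q(q-p)}{aq}$ on the quotient $\|u\|_{\dot L^p_a(\G)}/\|u\|_{L^p(\G)}$, rescale to identify $A=(C_{GN,\R,a,p,q})^{ap^2/(Q(q-p))}$, and take the infimum over admissible $q$ since $q$ appears only in the constant. The exponent bookkeeping and the derivation of \eqref{a2=0u1} from the normalisation $\|u\|_{L^p(\G)}=1$ are both as in the paper.
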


\begin{proof}
In the  $a_{2}=0$ version of Theorem \ref{THM:GNlog}, for $p<q<\frac{Qp}{Q-ap}$, we get
\begin{equation}
    \begin{split}
        &\int_{\mathbb{G}}\frac{|u|^{p}}{\|u\|^{p}_{L^{p}(\mathbb{G})}}\log\left(\frac{|u|^{p}}{\|u\|^{p}_{L^{p}(\mathbb{G})}}\right)dx\leq \frac{q}{q-p}\log\left(\frac{\|u\|^{p}_{L^{q}(\mathbb{G})}}{\|u\|^{p}_{L^{p}(\G)}}\right)\\&
\leq\frac{q}{q-p}\log\left(C^{\frac{p}{q}}_{GN,\mathcal{R}_{1},a_{1},p,q}\frac{\left(\int_{\G}|\R_{1}^{\frac{a_{1}}{\nu_{1}}} u(x)|^{p}dx\right)^{\frac{Q(q-p)}{a_{1}pq}}\left(\int_{\G}|u(x)|^{p}dx\right)^{\frac{a_{1}pq-Q(q-p)}{a_{1}pq}}}{\|u\|^{p}_{L^{p}(\mathbb{G})}}\right)\\&
=\frac{q}{q-p}\log\left(C^{\frac{p}{q}}_{GN,\mathcal{R}_{1},a_{1},p,q}\frac{\|u\|^{\frac{Q(q-p)}{a_{1}q}}_{\dot{L}^{p}_{a_{1}}(\G)}\|u\|^{\frac{a_{1}pq-Q(q-p)}{a_{1}q}}_{L^{p}(\G)}}{\|u\|^{p}_{L^{p}(\mathbb{G})}}\right)\\&
=\frac{q}{q-p}\log\left(C^{\frac{p}{q}}_{GN,\mathcal{R}_{1},a_{1},p,q}\frac{\|u\|^{\frac{Q(q-p)}{a_{1}q}}_{\dot{L}^{p}_{a_{1}}(\G)}}{\|u\|^{\frac{Q(q-p)}{a_{1}q}}_{L^{p}(\G)}}\right).
    \end{split}
\end{equation}
Denoting $a=a_1$, we have
\begin{multline*}
\frac{q}{q-p}\log\left(C^{\frac{p}{q}}_{GN,\mathcal{R}_{1},a,p,q}\frac{\|u\|^{\frac{Q(q-p)}{a q}}_{\dot{L}^{p}_{a_{1}}(\G)}}{\|u\|^{\frac{Q(q-p)}{a q}}_{L^{p}(\G)}}\right)=
\frac{q}{q-p}\log\left(C^{\frac{p}{q}\cdot\frac{aqp}{Q(q-p)}}_{GN,\mathcal{R}_{1},a ,p,q}\frac{\|u\|^{p}_{\dot{L}^{p}_{a}(\G)}}{\|u\|^{p}_{L^{p}(\G)}}\right)^{\frac{Q(q-p)}{a q p}} \\
\leq \frac{Q}{ap}
\log\left(C^{\frac{ap^2}{Q(q-p)}}_{GN,\mathcal{R}_{1},a ,p,q}\frac{\|u\|^{p}_{\dot{L}^{p}_{a}(\G)}}{\|u\|^{p}_{L^{p}(\G)}}\right).
\end{multline*}
Minimising the constant over the admissible range of $q$, the proof %of \eqref{a2=0} 
is complete, with \eqref{a2=0u1} following immediately from  \eqref{a2=0}.
%To show \eqref{cor.log.sob-p}, let us take $\R=-\Delta_\G=\nabla_{H}^*\cdot \nabla_{H}$ to be the sub-Laplacian on $\G$.The the statement follows under the observation that 
%\[\|(-\Delta_\G)^{1/2} u\|_{L^2(\G)}=\||\nabla_{H}| u\|_{L^2(\G)}=\|\nabla_{H}u\|_{L^2(\G)}\,,\]
%where $\nabla_{H}$ denotes the horizontal gradient on $\G$. 
\end{proof}

Next, let us show  a logarithmic Caffarelli-Kohn-Nirenberg inequality on graded groups. Firstly, from \cite[Theorem 5.9]{RY18b} we recall the following Caffarelli-Kohn-Nirenberg on graded groups:
\begin{thm}
Let $\G$ be a graded Lie group of homogeneous dimension $Q$ and let
$\R$ be a positive Rockland operator of homogeneous degree $\nu$. Let $|\cdot|$ be an arbitrary homogeneous quasi-norm. Let $1<p,r<\infty$, $\delta\in(0,1]$, $q\in(0,\infty)$ with $q\leq\frac{p}{1-\delta}$ for $\delta\neq 1$. Let $0<a r<Q$ and $\beta,\gamma\in \mathbb{R}$ with $\delta q(Q-ar-\beta r)\leq r(Q+q\gamma-q\beta)$ and $\beta(1-\delta)-\delta a\leq \gamma\leq \beta(1-\delta)$. Assume that $\frac{q(\delta Q+r(\beta(1-\delta)-\gamma -a\delta))}{rQ}+\frac{q(1-\delta)}{p}=1$. Then
  there exists a positive constant $C$ such that
\begin{equation}\label{CKNgr}
    \||x|^{\gamma}u\|_{L^{q}(\G)}\leq C\|\R^{\frac{a}{\nu}}u\|^{\delta}_{L^{r}(\G)}\||x|^{\beta}u\|^{1-\delta}_{L^{p}(\G)},
\end{equation}
for all $u\in \dot{L}^{r}_{a}(\G)$.
\end{thm}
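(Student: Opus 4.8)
The plan is to derive \eqref{CKNgr} by separating it into its genuinely analytic core and an elementary interpolation. When $\delta=1$ the inequality collapses to the one-weight Hardy--Sobolev estimate $\||x|^{\gamma}u\|_{L^q(\G)}\le C\|\R^{a/\nu}u\|_{L^r(\G)}$ associated to $\R$, which carries all of the hard content; the general case $0<\delta<1$ will then be obtained from it by a weighted H\"older interpolation against the factor $\||x|^{\beta}u\|_{L^p(\G)}$. So I would prove the Hardy--Sobolev endpoint first and recover $\delta=1$ as the limiting situation of the interpolation scheme.

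First I would introduce an intermediate exponent $q_1\in(0,\infty]$ and an intermediate weight $\gamma_1$ defined by the two linear relations
\begin{equation*}
\frac{\delta q}{q_1}+\frac{(1-\delta)q}{p}=1,\qquad \gamma=\delta\gamma_1+(1-\delta)\beta,
\end{equation*}
so that $\gamma_1=\frac{\gamma-(1-\delta)\beta}{\delta}$ and $q_1>0$ (the constraint $q\le\frac{p}{1-\delta}$ is exactly what keeps $q_1$ positive, with $q_1=\infty$ at equality). These are precisely the relations needed to apply H\"older's inequality to the factorisation $|x|^{\gamma q}|u|^{q}=\bigl(|x|^{\gamma_1}|u|\bigr)^{\delta q}\bigl(|x|^{\beta}|u|\bigr)^{(1-\delta)q}$ with conjugate exponents $\tfrac{q_1}{\delta q}$ and $\tfrac{p}{(1-\delta)q}$, giving
\begin{equation*}
\||x|^{\gamma}u\|_{L^q(\G)}\le \||x|^{\gamma_1}u\|^{\delta}_{L^{q_1}(\G)}\,\||x|^{\beta}u\|^{1-\delta}_{L^p(\G)}.
\end{equation*}
I would then apply the Hardy--Sobolev inequality $\||x|^{\gamma_1}u\|_{L^{q_1}(\G)}\le C\|\R^{a/\nu}u\|_{L^r(\G)}$ to the first factor, which holds precisely under the dilation-scaling identity $\frac{Q}{q_1}+\gamma_1=\frac{Q}{r}-a$ and the admissibility range $-a\le\gamma_1\le 0$. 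Substituting $\gamma_1=\frac{\gamma-(1-\delta)\beta}{\delta}$, the range $-a\le\gamma_1\le 0$ becomes verbatim the hypothesis $\beta(1-\delta)-\delta a\le\gamma\le\beta(1-\delta)$, while eliminating $q_1$ and $\gamma_1$ between the scaling identity and the two defining relations reproduces \emph{exactly} the dimensional balance assumption $\frac{q(\delta Q+r(\beta(1-\delta)-\gamma-a\delta))}{rQ}+\frac{q(1-\delta)}{p}=1$; the remaining inequality $\delta q(Q-ar-\beta r)\le r(Q+q\gamma-q\beta)$ encodes that $q_1$ stays within the permissible range for the Hardy--Sobolev estimate. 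Combining the two displays yields \eqref{CKNgr} with the constant equal to the Hardy--Sobolev constant raised to the power $\delta$.

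The only substantial step, and where I expect the real difficulty to lie, is the one-weight Hardy--Sobolev inequality invoked above. I would prove it by writing $u=\R^{-a/\nu}f$ with $f=\R^{a/\nu}u$, using that the convolution kernel of the Riesz potential $\R^{-a/\nu}$ is homogeneous of degree $a-Q$ and hence pointwise comparable to $|x|^{a-Q}$ (this is where $0<ar<Q$ enters, ensuring the potential is well defined and of subcritical order), and then invoking the Stein--Weiss weighted Hardy--Littlewood--Sobolev inequality on homogeneous groups \cite{RY18b}. Checking the admissible weight and exponent ranges for Stein--Weiss, together with a separate treatment of the endpoint $q_1=\infty$, is the technical heart of the argument; by contrast the H\"older interpolation of the preceding paragraph is entirely elementary and carries no further hypotheses beyond the balance relations already recorded.
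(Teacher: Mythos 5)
The paper does not prove this theorem at all: it is quoted verbatim from \cite[Theorem 5.9]{RY18b} and used as a black box to derive the \emph{logarithmic} Caffarelli--Kohn--Nirenberg inequality in Theorem \ref{logCKNgrthm}. So there is no in-paper proof to compare against; what can be said is that your proposal correctly reconstructs the argument of the cited reference, which proceeds exactly as you describe: a H\"older factorisation $|x|^{\gamma q}|u|^q=(|x|^{\gamma_1}|u|)^{\delta q}(|x|^{\beta}|u|)^{(1-\delta)q}$ reducing the two-weight estimate to the one-weight Hardy--Sobolev inequality (the paper's Theorem \ref{THM:HSgraded}, also from \cite{RY18b}), which in turn rests on the $(a-Q)$-homogeneity of the Riesz kernel of $\R^{-a/\nu}$ and the Stein--Weiss inequality on homogeneous groups. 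I checked your bookkeeping: eliminating $q_1$ and $\gamma_1$ from $\frac{\delta q}{q_1}+\frac{(1-\delta)q}{p}=1$, $\gamma=\delta\gamma_1+(1-\delta)\beta$ and the scaling identity $\frac{Q}{q_1}+\gamma_1=\frac{Q}{r}-a$ does reproduce the stated balance condition, and $-a\le\gamma_1\le 0$ is precisely $\beta(1-\delta)-\delta a\le\gamma\le\beta(1-\delta)$, which via the scaling identity is equivalent to $r\le q_1$ and $-\gamma_1 q_1\ge 0$ as required by Theorem \ref{THM:HSgraded} (the remaining requirement $-\gamma_1 q_1<Q$ is automatic from $\frac{Q}{q_1}+\gamma_1=\frac{Q}{r}-a>0$). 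Two small remarks. First, the hypothesis $\delta q(Q-ar-\beta r)\le r(Q+q\gamma-q\beta)$ is in fact redundant once the balance condition is imposed: substituting the balance condition reduces it to $-\frac{Q(1-\delta)}{p}\le 0$, equivalently to $q_1\ge\delta q$, which H\"older already guarantees, so you need not hunt for extra content in it. Second, the endpoint $q_1=\infty$ (i.e.\ $q=\frac{p}{1-\delta}$) that you flag for separate treatment is vacuous here: it would force $\gamma_1=\frac{Q}{r}-a>0$, contradicting $\gamma_1\le 0$, so no endpoint Hardy--Sobolev estimate is ever needed under the stated hypotheses.
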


Now, in combination with the logarithmic H\"older inequality, our usual strategy implies the logarithmic  Caffarelli-Kohn-Nirenberg inequality on graded groups.

\begin{thm}\label{logCKNgrthm}
Let $\G$ be a graded Lie group of homogeneous dimension $Q$ and let
$\R$ be a positive Rockland operator of homogeneous degree $\nu$. Let $|\cdot|$ be an arbitrary homogeneous quasi-norm. Let $1<p,r<\infty$, $\delta\in(0,1]$, $q\in(p,\infty)$ with $q\leq\frac{p}{1-\delta}$ for $\delta\neq 1$. Let $0<a r<Q$ and $\beta,\gamma\in \mathbb{R}$ with $\delta q(Q-ar-\beta r)\leq r(Q+q\gamma-q\beta)$ and $\beta(1-\delta)-\delta a<\gamma\leq \beta(1-\delta)$. Assume that $\frac{q(\delta Q+r(\beta(1-\delta)-\gamma -a\delta))}{rQ}+\frac{q(1-\delta)}{p}=1$. Then we have
\begin{equation}
    \int_{\G}\frac{|x|^{\gamma p}|u|^{p}}{\||x|^{\gamma}u\|^{p}_{L^{p}(\G)}}\log\left(\frac{|x|^{\gamma p}|u|^{p}}{\||x|^{\gamma}u\|^{p}_{L^{p}(\G)}}\right)dx\leq \frac{q}{q-p}\log\left(C\frac{\|\R^{\frac{a}{\nu}}u\|^{\delta p}_{L^{r}(\G)}\||x|^{\beta}u\|^{(1-\delta)p}_{L^{p}(\G)}}{\||x|^{\gamma}u\|^{p}_{L^{p}(\G)}}\right),
\end{equation}
for all non-zero $|x|^{\gamma}u,|x|^{\beta}u, \R^{\frac{a}{\nu}}u\in L^{p}(\G)$.
\end{thm}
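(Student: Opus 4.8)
The plan is to follow the same two-step strategy used above for the logarithmic Sobolev and logarithmic Gagliardo--Nirenberg inequalities: first reduce the logarithmic integral to a ratio of $L^{q}$ and $L^{p}$ norms via the logarithmic H\"older inequality of Lemma \ref{holder}, and then control the resulting $L^{q}$ norm by the sharp Caffarelli--Kohn--Nirenberg estimate \eqref{CKNgr}. The crucial bookkeeping observation is that the weight $|x|^{\gamma}$ should be absorbed into the function itself. I would therefore set $v:=|x|^{\gamma}u$, so that $|v|^{p}=|x|^{\gamma p}|u|^{p}$ and $\|v\|_{L^{p}(\G)}^{p}=\||x|^{\gamma}u\|_{L^{p}(\G)}^{p}$; with this choice the left-hand side of the claimed inequality is exactly
\[
\int_{\G}\frac{|v|^{p}}{\|v\|_{L^{p}(\G)}^{p}}\log\left(\frac{|v|^{p}}{\|v\|_{L^{p}(\G)}^{p}}\right)dx,
\]
that is, the left-hand side of \eqref{holdernn} evaluated at $v$.

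Next I would apply Lemma \ref{holder} to $v$ with the exponents $p$ and $q$. This is legitimate because the hypothesis $q\in(p,\infty)$ together with $p>1$ gives $1<p<q<\infty$, and because $v\in L^{p}(\G)\cap L^{q}(\G)\setminus\{0\}$: indeed $v=|x|^{\gamma}u\in L^{p}(\G)\setminus\{0\}$ by assumption, while the finiteness of $\|v\|_{L^{q}(\G)}=\||x|^{\gamma}u\|_{L^{q}(\G)}$ is guaranteed by \eqref{CKNgr} under the stated balance conditions. Lemma \ref{holder} then bounds the displayed quantity by $\frac{q}{q-p}\log\left(\|v\|_{L^{q}(\G)}^{p}/\|v\|_{L^{p}(\G)}^{p}\right)$.

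Finally I would insert the Caffarelli--Kohn--Nirenberg inequality \eqref{CKNgr}, which states precisely that $\||x|^{\gamma}u\|_{L^{q}(\G)}\leq C\|\R^{\frac{a}{\nu}}u\|_{L^{r}(\G)}^{\delta}\||x|^{\beta}u\|_{L^{p}(\G)}^{1-\delta}$. Raising this to the power $p$ and using the monotonicity of the logarithm (together with $\frac{q}{q-p}>0$) in the bound from the previous step yields
\[
\frac{q}{q-p}\log\left(\frac{\|v\|_{L^{q}(\G)}^{p}}{\|v\|_{L^{p}(\G)}^{p}}\right)\leq \frac{q}{q-p}\log\left(C^{p}\,\frac{\|\R^{\frac{a}{\nu}}u\|_{L^{r}(\G)}^{\delta p}\||x|^{\beta}u\|_{L^{p}(\G)}^{(1-\delta)p}}{\||x|^{\gamma}u\|_{L^{p}(\G)}^{p}}\right),
\]
and relabelling the constant $C^{p}$ as $C$ gives exactly the asserted inequality. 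The only points requiring attention are verifying that the imposed constraints (the balance identity $\frac{q(\delta Q+r(\beta(1-\delta)-\gamma-a\delta))}{rQ}+\frac{q(1-\delta)}{p}=1$, the inequality $\delta q(Q-ar-\beta r)\leq r(Q+q\gamma-q\beta)$, and $\beta(1-\delta)-\delta a<\gamma\leq\beta(1-\delta)$) are precisely those under which \eqref{CKNgr} is available, and that the sharpening of the range to $q>p$ makes Lemma \ref{holder} applicable. I do not expect any genuine analytic obstacle, since the whole argument is a direct concatenation of two already-established inequalities; the essential content lies entirely in the weighted CKN estimate \eqref{CKNgr} and the logarithmic H\"older bound \eqref{holdernn}.
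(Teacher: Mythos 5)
Your proposal is correct and follows exactly the paper's own argument: apply the logarithmic H\"older inequality of Lemma \ref{holder} to $v=|x|^{\gamma}u$ (using $q>p$) and then bound $\|v\|_{L^{q}(\G)}$ via the Caffarelli--Kohn--Nirenberg inequality \eqref{CKNgr}. The additional checks you note (membership of $v$ in $L^{p}\cap L^{q}$, matching of the parameter constraints to those of \eqref{CKNgr}) are sensible but do not change the route, which coincides with the paper's two-line proof.
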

\begin{proof} The proof is immediate:
by using $q>p$ and logarithmic H\"{o}lder's inequality, we have 
\begin{equation}
    \begin{split}
        &\int_{\mathbb{G}}\frac{|x|^{\gamma p}|u|^{p}}{\||x|^{\gamma }u\|^{p}_{L^{p}(\mathbb{G})}}\log\left(\frac{|x|^{\gamma p}|u|^{p}}{\||x|^{\gamma }u\|^{p}_{L^{p}(\mathbb{G})}}\right)dx\leq \frac{q}{q-p}\log\left(\frac{\||x|^{\gamma }u\|^{p}_{L^{q}(\mathbb{G})}}{\||x|^{\gamma }u\|^{p}_{L^{p}(\G)}}\right)\\&
\stackrel{\eqref{CKNgr}}\leq\frac{q}{q-p}\log\left(C\frac{\|\R^{\frac{a}{\nu}}u\|^{\delta p}_{L^{r}(\G)}\||x|^{\beta}u\|^{(1-\delta)p}_{L^{p}(\G)}}{\||x|^{\gamma}u\|^{p}_{L^{p}(\G)}}\right),
    \end{split}
\end{equation}
completing the proof.
\end{proof}
\begin{cor}
Under the same assumptions of Theorem \ref{logCKNgrthm} with $\delta=1$ and $\gamma=0$, we get the log-Sobolev inequalities \eqref{LogSobolev} and \eqref{a2=0}.
\end{cor}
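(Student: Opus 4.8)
The plan is to read off both \eqref{a2=0} and \eqref{LogSobolev} as the degenerate case $\delta=1$, $\gamma=0$ of the logarithmic Caffarelli--Kohn--Nirenberg inequality of Theorem~\ref{logCKNgrthm}, after matching the integrability exponent $r$ of the Rockland term to $p$. First I would set $\delta=1$ and $\gamma=0$ in Theorem~\ref{logCKNgrthm}; since the weight $\beta$ then enters only through the factor $\||x|^{\beta}u\|^{(1-\delta)p}_{L^{p}(\G)}=1$, I would simply take $\beta=0$. With $\gamma=0$ the left-hand side loses its weight, as $|x|^{\gamma p}=1$ and $\||x|^{\gamma}u\|_{L^{p}(\G)}=\|u\|_{L^{p}(\G)}$, so the entropy functional becomes exactly the one in \eqref{a2=0}; with $\delta=1$ the right-hand side loses the factor $\||x|^{\beta}u\|^{(1-\delta)p}_{L^{p}(\G)}$ and $\|\R^{\frac{a}{\nu}}u\|^{\delta p}_{L^{r}(\G)}$ becomes $\|\R^{\frac{a}{\nu}}u\|^{p}_{L^{r}(\G)}$.

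The one step deserving care is the bookkeeping of the admissibility conditions, which is what forces the exponent $q$. Taking $r=p$ and inserting $\delta=1$, $\gamma=0$, $\beta=0$ into the balance condition $\frac{q(\delta Q+r(\beta(1-\delta)-\gamma-a\delta))}{rQ}+\frac{q(1-\delta)}{p}=1$ reduces it to $\frac{q(Q-ap)}{pQ}=1$, so that $q$ is pinned to the critical Sobolev exponent $q=\frac{Qp}{Q-ap}=p^{*}$. One checks simultaneously that the sign conditions $\beta(1-\delta)-\delta a<\gamma\le\beta(1-\delta)$ read $-a<0\le 0$ and that $\delta q(Q-ar-\beta r)\le r(Q+q\gamma-q\beta)$ reduces to $q\le p^{*}$, consistent with equality; moreover $0<ap<Q$ guarantees $p<p^{*}<\infty$, so $q=p^{*}$ is admissible in Theorem~\ref{logCKNgrthm}. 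A short computation gives $q-p=\frac{ap^{2}}{Q-ap}$, whence the prefactor collapses to $\frac{q}{q-p}=\frac{Q}{ap}$, precisely the constant in \eqref{a2=0} and \eqref{LogSobolev}.

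Assembling these substitutions, Theorem~\ref{logCKNgrthm} becomes
\[
\int_{\mathbb{G}}\frac{|u|^{p}}{\|u\|^{p}_{L^{p}(\G)}}\log\left(\frac{|u|^{p}}{\|u\|^{p}_{L^{p}(\G)}}\right)dx\le\frac{Q}{ap}\log\left(C\frac{\|\R^{\frac{a}{\nu}}u\|^{p}_{L^{p}(\G)}}{\|u\|^{p}_{L^{p}(\G)}}\right),
\]
which, recalling $\|u\|_{\dot{L}^{p}_{a}(\G)}=\|\R^{\frac{a}{\nu}}u\|_{L^{p}(\G)}$, is exactly \eqref{a2=0}. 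To pass to the inhomogeneous form \eqref{LogSobolev} I would use $\|u\|^{p}_{\dot{L}^{p}_{a}(\G)}=\|\R^{\frac{a}{\nu}}u\|^{p}_{L^{p}(\G)}\le\|u\|^{p}_{L^{p}(\G)}+\|\R^{\frac{a}{\nu}}u\|^{p}_{L^{p}(\G)}=\|u\|^{p}_{L^{p}_{a}(\G)}$ together with the monotonicity of the logarithm. I expect no genuine obstacle here: since at $\delta=1$, $\gamma=0$, $r=p$ the Caffarelli--Kohn--Nirenberg inequality \eqref{CKNgr} is nothing but the Sobolev inequality \eqref{sobin1} at the endpoint $q=p^{*}$, the present argument literally specializes the proof of Theorem~\ref{logCKNgrthm} to the log-H\"older-plus-Sobolev argument already used to establish \eqref{LogSobolev}; the only subtlety is recognizing that the choice $r=p$ and the forced value $q=p^{*}$ are exactly what make the constants coincide.
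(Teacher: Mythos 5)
Your proposal is correct and matches what the paper intends: the corollary is stated without proof precisely because it is the routine specialization $\delta=1$, $\gamma=0$ (and $r=p$, $\beta=0$) of Theorem \ref{logCKNgrthm}, with the balance condition pinning $q=\frac{Qp}{Q-ap}$ and hence $\frac{q}{q-p}=\frac{Q}{ap}$, exactly as you compute. Your passage from the homogeneous to the inhomogeneous Sobolev norm via $\|\R^{\frac{a}{\nu}}u\|_{L^p(\G)}^p\leq\|u\|_{L^p_a(\G)}^p$ and monotonicity of the logarithm correctly recovers the form of \eqref{LogSobolev} as well, the only caveat being that this route yields the inequalities with an unspecified constant rather than the explicit constants $C_p$ and $A$ recorded in \eqref{LogSobolev} and \eqref{CinfGN}.
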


Our next aim is to give a weighted version of the log-Sobolev inequality. For this let us recall the so-called Hardy-Sobolev family of inequalities from 
 \cite[Theorem 5.1]{RY18b}, interpolating between the Hardy and the Sobolev inequalities.
 
\begin{thm}[Hardy-Sobolev inequality]\label{THM:HSgraded}
Let $\G$ be a graded Lie group of homogeneous dimension $Q$ and let
$\R$ be a positive Rockland operator of homogeneous degree $\nu$. Let $|\cdot|$ be an arbitrary homogeneous quasi-norm. Let $1<p\leq q<\infty$. Let $0<a p<Q$ and $0\leq \beta< Q$. Assume that $\frac{1}{p}-\frac{1}{q}=\frac{a}{Q}-\frac{\beta}{qQ}$. Then there exists a positive constant $C$ such that
\begin{equation}\label{EQ:HSgrad}
    \left\|\frac{u}{|x|^{\frac{\beta}{q}}}\right\|_{L^{q}(\G)}\leq C\|\R^{\frac{a}{\nu}}u\|_{L^{p}(\G)},
\end{equation}
holds for all $u\in \dot{L}^{p}_{a}(\G)$.
\end{thm}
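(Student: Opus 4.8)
The plan is to remove the operator $\Ra$ by inverting it through the Riesz potential attached to $\R$, thereby reducing \eqref{EQ:HSgrad} to a weighted fractional-integral (Stein--Weiss) inequality on $\G$, and then to prove that weighted inequality by passing to polar coordinates and splitting the convolution according to the relative sizes of $|x|$ and $|y|$. Concretely, I would set $f:=\Ra u$, so that $f\in L^p(\G)$ and, since $0<ap<Q$ forces $0<a<Q$, the negative power $\R^{-a/\nu}$ is given by the functional calculus for positive Rockland operators (see \cite{FR16}) as a convolution $u=\R^{-a/\nu}f=f\ast\mathcal{I}_a$. Here $\mathcal{I}_a$ is the Riesz kernel of $\R$: it is smooth on $\G\setminus\{e\}$ and homogeneous of degree $a-Q$, so $|\mathcal{I}_a(z)|\le C|z|^{a-Q}$ for $z\ne e$; as $a>0$ this kernel is locally integrable and the convolution is well defined. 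Writing the convolution explicitly and using this pointwise bound yields
\[
|u(x)|\le C\int_\G |y^{-1}x|^{\,a-Q}\,|f(y)|\,dy ,
\]
so that \eqref{EQ:HSgrad} follows once we establish the Stein--Weiss inequality
\[
\left\| |x|^{-\beta/q}\int_\G |y^{-1}x|^{\,a-Q}|f(y)|\,dy\right\|_{L^q(\G)}\le C\,\|f\|_{L^p(\G)} .
\]

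Before proving it I would record that the exponent hypotheses are exactly the ones making the inequality admissible. Under a dilation $x\mapsto\lambda x$ the left-hand side of \eqref{EQ:HSgrad} rescales by $\lambda^{(\beta-Q)/q}$ and the right-hand side by $\lambda^{a-Q/p}$ (using that $\Ra$ is homogeneous of degree $a$ and that the Haar measure scales by $\lambda^{-Q}$); these coincide precisely under the assumed relation $\frac1p-\frac1q=\frac aQ-\frac{\beta}{qQ}$, so the inequality is dilation invariant. To prove the Stein--Weiss inequality I would use the polar decomposition $\int_\G g\,dx=\int_0^\infty\!\int_{\mathfrak S}g(r\omega)\,r^{Q-1}\,d\sigma(\omega)\,dr$ valid on any homogeneous group, and split the $y$-integral into the three regions $|y|\lesssim|x|$, $|y|\simeq|x|$, and $|y|\gtrsim|x|$. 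On the first region $|y^{-1}x|\simeq|x|$ and on the third $|y^{-1}x|\simeq|y|$, so in each case the operator collapses to a one-dimensional weighted Hardy (respectively dual Hardy) operator in the radial variable, whose $L^p\to L^q$ boundedness is the classical integral Hardy inequality under the conditions $1<p\le q<\infty$ and $0\le\beta<Q$; on the middle region the weight $|x|^{-\beta/q}$ is comparable to $|y|^{-\beta/q}$, can be absorbed, and what remains is the unweighted Hardy--Littlewood--Sobolev inequality on $\G$.

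The essential difficulty lies not in the scaling bookkeeping but in two structural points special to the graded, non-abelian setting. First, one must justify the representation $u=f\ast\mathcal{I}_a$ together with the sharp homogeneity $a-Q$ and smoothness of $\mathcal{I}_a$ for a general positive Rockland operator; this relies on the homogeneity of $\R$, heat-kernel bounds, and the Hulanicki-type functional calculus of \cite{FR16}, and it is here that the restriction $0<a<Q$ guarantees local integrability of the kernel. Second, because convolution on $\G$ is noncommutative and the quasi-norm obeys only $|xy|\le C(|x|+|y|)$ with a constant $C\ge1$, the region splitting and the comparisons $|y^{-1}x|\simeq\max(|x|,|y|)$ away from the diagonal must be carried out with this constant tracked throughout, and the reduction to a radial Hardy operator must be made uniform over the unit sphere $\mathfrak S$. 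Once these points are secured, the constant $C$ in \eqref{EQ:HSgrad} is obtained by combining the norm of the radial weighted Hardy operator with the Hardy--Littlewood--Sobolev constant on $\G$.
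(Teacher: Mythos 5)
The paper does not actually prove Theorem \ref{THM:HSgraded}: it is imported verbatim from \cite[Theorem 5.1]{RY18b}, so there is no internal proof to compare against. Judged on its own, your outline is the standard and, in substance, correct route --- and it is essentially the mechanism behind the cited result: invert $\Ra$ via the Riesz kernel $\mathcal{I}_a$ of the positive Rockland operator (homogeneous of degree $a-Q$, smooth away from $e$, locally integrable since $0<a<Q$), reduce to a one-weight Stein--Weiss bound for $f\mapsto |x|^{-\beta/q}\,(|f|\ast|\cdot|^{a-Q})(x)$, and treat the off-diagonal regions by the Muckenhoupt-type weighted Hardy inequality in the radial variable after polar decomposition. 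Your scaling check correctly identifies the exponent relation, and the hypotheses enter exactly where you place them: $p>1$ and $ap<Q$ make the two radial Hardy operators bounded, $\beta<Q$ makes $\int_0^r s^{Q-1-\beta}\,ds$ finite in the dual Hardy piece, and $p\le q$ is what makes the diagonal summation close.

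The one step stated too loosely is the diagonal region $|y|\simeq|x|$. After absorbing the weight, what remains is \emph{not} the global unweighted Hardy--Littlewood--Sobolev inequality: HLS lands in $L^{\tilde q}(\G)$ with $\frac{1}{\tilde q}=\frac1p-\frac aQ$, and for $\beta>0$ one has $q<\tilde q$, so no global embedding $L^{\tilde q}\hookrightarrow L^{q}$ is available. You need to decompose the diagonal into dyadic shells $|x|\simeq 2^k$, apply HLS (or Young's inequality) on each shell, use H\"older on the shell to pass from $L^{\tilde q}$ to $L^{q}$ --- the weight $2^{-k\beta/q}$ exactly cancels the volume factor $2^{kQ(\frac1q-\frac1{\tilde q})}$ --- and then sum using the bounded overlap of the shells together with $\ell^p\hookrightarrow\ell^q$, which is where $p\le q$ is genuinely used. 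This is routine but it is the only place in your sketch where the stated reduction, taken literally, would fail; with it spelled out the argument is complete.
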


Now let us present the weighted logarithmic Sobolev inequality on graded groups.
\begin{thm}\label{wsobingrthm}
Let $\G$ be a graded Lie group of homogeneous dimension $Q$ and let
$\R$ be a positive Rockland operator of homogeneous degree $\nu$. Let $|\cdot|$ be an arbitrary homogeneous quasi-norm. Let $1<p<\infty$ and $0\leq \beta <a p<Q$. Assume that $\frac{1}{p}-\frac{1}{q}=\frac{a}{Q}-\frac{\beta}{qQ}$. Then we have 
    \begin{equation}\label{harsobingr}
         \int_{\mathbb{G}}\frac{|x|^{-\frac{\beta p}{q}}|u|^{p}}{\||x|^{-\frac{\beta}{q}} u\|^{p}_{L^{p}(\G)}}\log\left(\frac{|x|^{-\frac{\beta p}{q}}|u|^{p}}{\||x|^{-\frac{\beta}{q}} u\|^{p}_{L^{p}(\G)}}\right) dx \leq \frac{Q-\beta}{ap-\beta}\log\left(C\frac{\|\R^{\frac{a}{\nu}}u\|^{p}_{L^{p}(\G)}}{\||x|^{-\frac{\beta}{q}} u\|^{p}_{L^{p}(\G)}}\right),
     \end{equation}
     for all nontrivial $|x|^{-\frac{\beta}{q}}|u|,\R^{\frac{a}{\nu}}u\in L^{p}(\G)$.
\end{thm}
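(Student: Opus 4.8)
The plan is to follow the same two-step strategy used throughout this section: first apply the logarithmic H\"older inequality of Lemma \ref{holder} to a suitably reweighted function, and then control the resulting $L^{q}$-norm by the Hardy-Sobolev inequality of Theorem \ref{THM:HSgraded}. The only genuinely new ingredient is the bookkeeping of the weight $|x|^{-\beta/q}$ and the identification of the prefactor $\frac{q}{q-p}$ with $\frac{Q-\beta}{ap-\beta}$.

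First I would set $v:=|x|^{-\frac{\beta}{q}}u$, so that $|v|^{p}=|x|^{-\frac{\beta p}{q}}|u|^{p}$ and $\|v\|_{L^{p}(\G)}=\||x|^{-\frac{\beta}{q}}u\|_{L^{p}(\G)}$. The hypothesis $|x|^{-\frac{\beta}{q}}u\in L^{p}(\G)$ gives $v\in L^{p}(\G)$, while the Hardy-Sobolev bound below will give $v\in L^{q}(\G)$, so $v\in L^{p}(\G)\cap L^{q}(\G)\setminus\{0\}$ and Lemma \ref{holder} is applicable on the measure space $(\G,dx)$; it yields
\[
\int_{\mathbb{G}}\frac{|x|^{-\frac{\beta p}{q}}|u|^{p}}{\||x|^{-\frac{\beta}{q}}u\|^{p}_{L^{p}(\G)}}\log\left(\frac{|x|^{-\frac{\beta p}{q}}|u|^{p}}{\||x|^{-\frac{\beta}{q}}u\|^{p}_{L^{p}(\G)}}\right)dx\leq \frac{q}{q-p}\log\left(\frac{\||x|^{-\frac{\beta}{q}}u\|^{p}_{L^{q}(\G)}}{\||x|^{-\frac{\beta}{q}}u\|^{p}_{L^{p}(\G)}}\right).
\]
Next, observing that $\||x|^{-\frac{\beta}{q}}u\|_{L^{q}(\G)}=\|u/|x|^{\frac{\beta}{q}}\|_{L^{q}(\G)}$, I would invoke the Hardy-Sobolev inequality \eqref{EQ:HSgrad}, whose hypotheses (namely $1<p\leq q<\infty$, $0<ap<Q$, $0\leq\beta<Q$, and the balance condition $\frac{1}{p}-\frac{1}{q}=\frac{a}{Q}-\frac{\beta}{qQ}$) are exactly those assumed here, to replace this $L^{q}$-norm by $C\|\R^{\frac{a}{\nu}}u\|_{L^{p}(\G)}$. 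Substituting into the bound above produces precisely the right-hand side of \eqref{harsobingr} up to the evaluation of the prefactor.

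It then remains to evaluate $\frac{q}{q-p}$ from the balance condition. Multiplying $\frac{1}{p}-\frac{1}{q}=\frac{a}{Q}-\frac{\beta}{qQ}$ by $Q$ gives $\frac{Q-\beta}{q}=\frac{Q}{p}-a=\frac{Q-ap}{p}$, hence $q=\frac{p(Q-\beta)}{Q-ap}$; in particular $q>p$ exactly because $\beta<ap$, which retroactively justifies the applicability of Lemma \ref{holder}. A direct computation then gives $q-p=\frac{p(ap-\beta)}{Q-ap}$ and therefore $\frac{q}{q-p}=\frac{Q-\beta}{ap-\beta}$, which is the constant appearing in \eqref{harsobingr}. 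I do not anticipate any real obstacle: the argument is a clean concatenation of the logarithmic H\"older and Hardy-Sobolev inequalities, and the only point requiring care is the elementary but essential verification that the scaling relation both forces $q>p$ and converts $\frac{q}{q-p}$ into $\frac{Q-\beta}{ap-\beta}$.
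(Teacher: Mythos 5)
Your proposal is correct and follows essentially the same route as the paper's proof: apply the logarithmic H\"older inequality to $|x|^{-\beta/q}u$, control the resulting $L^q$-norm via the Hardy--Sobolev inequality \eqref{EQ:HSgrad}, and use the balance condition to show $q=\frac{(Q-\beta)p}{Q-ap}>p$ and $\frac{q}{q-p}=\frac{Q-\beta}{ap-\beta}$. The only (immaterial) difference is ordering — the paper verifies $q>p$ and evaluates the prefactor before invoking the two inequalities, whereas you do it afterwards.
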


\begin{proof}
By assumption $q=\frac{(Q-\beta)p}{Q-ap}$ and by using $0\leq \beta <ap< Q$, we have 
\begin{equation}
    q-p=\frac{(Q-\beta)p}{Q-ap}-p=\frac{p(Q-\beta-Q+ap)}{Q-ap}=\frac{(ap-\beta)p}{Q-ap}>0.
\end{equation}
It means we have $q>p$ and by using this fact, we compute
 \begin{equation}\label{vychetstploghsgr}
     0<\frac{q}{q-p}=\frac{\frac{(Q-\beta)p}{Q-ap}}{\frac{(Q-\beta)p}{Q-ap}-p}
     =\frac{\frac{Q-\beta}{Q-ap}}{\frac{(Q-\beta)}{Q-ap}-1}
     =\frac{Q-\beta}{ap-\beta}.
 \end{equation}
 %Also, from the last fact we have $ap>\beta$.
 We also note that we have
 $\frac{1}{p}-\frac{1}{q}= \frac{a}{Q}-\frac{\beta}{Qq}$ in Theorem \ref{THM:HSgraded}.
 By using the logarithmic H\"{o}lder's inequality in Lemma \ref{holder} with  $q>p$, we have
 \begin{equation*}
     \begin{split}
         \int_{\mathbb{G}}\frac{|x|^{-\frac{\beta p}{q}}|u|^{p}}{\||x|^{-\frac{\beta}{q}} u\|^{p}_{L^{p}(\G)}}\log\left(\frac{|x|^{-\frac{\beta p}{q}}|u|^{p}}{\||x|^{-\frac{\beta}{q}} u\|^{p}_{L^{p}(\G)}}\right) &dx\leq \frac{q}{q-p}\log\left(\frac{\||x|^{-\frac{\beta}{q}} u\|^{p}_{L^{q}(\G)}}{\||x|^{-\frac{\beta}{q}} u\|^{p}_{L^{p}(\G)}}\right)\\&
         \stackrel{\eqref{EQ:HSgrad}}\leq \frac{q}{q-p}\log\left(C\frac{\|\R^{\frac{a}{\nu}}u\|^{p}_{L^{p}(\G)}}{\||x|^{-\frac{\beta}{q}} u\|^{p}_{L^{p}(\G)}}\right)\\&
         \stackrel{\eqref{vychetstploghsgr}}=\frac{Q-\beta}{ap-\beta}\log\left(C\frac{\|\R^{\frac{a}{\nu}}u\|^{p}_{L^{p}(\G)}}{\||x|^{-\frac{\beta}{q}} u\|^{p}_{L^{p}(\G)}}\right),
     \end{split}
 \end{equation*}
completing the proof.
\end{proof}
\begin{rem}
In particular for $\beta=0$, from \eqref{harsobingr}, we get the logarithmic Sobolev inequality. To be able to use the logarithmic H\"{o}lder's inequality, we need $1<p<q<\infty$ and if  $\beta=aq$, then we have $p=q$, and so we can not obtain the logarithmic Hardy inequality by these arguments. Nevertheless, we are still able to prove a version of the logarithmic Hardy inequalities, but by a very different argument, so this will be a subject of another paper \cite{CKR21}. 
\end{rem}

\section{Logarithmic inequalities on general Lie groups}
\label{SEC:generalLie}

In this section, we present logarithmic Sobolev and Hardy-Sobolev type inequalities on general Lie groups. In such generality we have a more limited track of constants compared e.g. to the case of graded groups. Moreover, we do not obtain estimates in homogeneous Sobolev spaces but in inhomogeneous ones, associated to H\"ormander's sums of squares with a drift, to make such sub-Laplacian operators self-adjoint, to include the cases when the group is not unimodular. However, if we decide to work with sub-Laplacians and we are not after best constants, very general results are possible, also including weighted versions of these logarithmic estimates. 

Let first $\G$ be a noncompact connected Lie group. Assume that $e$ is the identity element of $\G$, and
let $X = \{X_1, . . . , X_n\}$ be a family of linearly independent, left-invariant vector fields on this group satisfying H\"{o}rmander’s condition.  We denote by $\mu_{\chi}$ a measure on $\G$, whose density is the continuous positive character $\chi$ of $\G$ with respect to the right Haar measure $\rho$ of $\G$, i.e. $d\mu_{\chi} = \chi d\rho.$ Let us denote by $\delta$ the modular function on $\G$, so that $d\lambda = \delta d\rho$ is the left Haar measure on $\G$. In \cite{HMM05}, the authors showed that the  differential operator
\begin{equation}
    \Delta_{\chi}=-\sum_{i=1}^{n}\left(X_{j}^{2}+c_{j}X_{j}\right),
\end{equation}
with domain $C_{0}^{\infty}(\G)$ on $\G$ is essentially self-adjoint on $L^{2}(\mu_{\chi})$, where $c_{j} = (X_{j}\chi)(e)$, $j = 1, \ldots , n.$

The Sobolev spaces on $\G$ are of the following form:
\begin{equation}
    L^{p}_{a}(\mu_{\chi}):=\{u:u\in L^{p}(\mu_{\chi}),\Delta^{\frac{a}{2}}_{\chi}u\in L^{p}(\mu_{\chi}) \},
\end{equation}
endowed with norm
\begin{equation}
    \|u\|_{L^{p}_{a}(\mu_{\chi})}:=\|u\|_{L^{p}(\mu_{\chi})}+\|\Delta^{\frac{a}{2}}_{\chi}u\|_{L^{p}(\mu_{\chi})}.
\end{equation}
Similarly, the Sobolev spaces with the left Haar measure $d\lambda$ (so that we take $\chi=\delta$ in this case) can be introduced by
\begin{equation}
    L^{p}_{a}(\lambda):=\{u:u\in L^{p}(\lambda),\Delta^{\frac{a}{2}}_{\chi}u\in L^{p}(\lambda) \},
\end{equation}
endowed with the norm
\begin{equation*}
    \|u\|_{L^{p}_{a}(\lambda)}:=\|u\|_{L^{p}(\lambda)}+\|\Delta^{\frac{a}{2}}_{\chi}u\|_{L^{p}(\lambda)}.
\end{equation*}
The embedding properties of the Sobolev spaces $L^{p}_{a}(\mu_{\chi})$ were established in \cite{BPTV19} and \cite{RY18a} by different methods, the latter paper giving also a family of weighted Hardy-Sobolev inequalities for these spaces and including the case of compact Lie groups.

We recall that $X = \{X_1, . . . , X_n\}$ (which is a family of linearly independent, left-invariant vector fields on this group satisfying H\"{o}rmander’s condition) induce the Carnot-Carath\'{e}odory distance $d_{CC}(\cdot, \cdot)$ (shortly, CC-distance). Assume that $B(c_{B},r_{B})$ is a ball with respect to the CC-distance centered at $c_{B}$ with radius $r_{B}$. Also, for simplicity of notation, we will write 
\begin{equation}\label{EQ:dcc}
|x|_{CC}=d_{CC}(e,x).
\end{equation}
Let us denote by $V(r):=\rho(B_{r})$ the volume of the ball $B_{r}:=B(e,r)$ with respect to the right Haar measure, then it is well-known that there exist two constants $d=d(\G,X)$ and $D=D(\G)$ such that 
\begin{equation*}
    V(r)\approx r^{d},\,\,\,\,\forall r\in (0,1],
\end{equation*}
and
\begin{equation*}
    V(r)\lesssim e^{Dr},\,\,\,\,\forall r\in (1,\infty).
\end{equation*}
We say that $d$ and $D$ are local and global dimensions of the metric measure space $(\G,d_{CC}, \rho)$, respectively. 

If $\G$ is a compact Lie group, it is unimodular, so that we have that the modular function $\delta=1$, and it means that $d\lambda=d\rho$. In addition, if we take the character $\chi=1$, we note that the Sobolev space $L^{p}_{a}(\mu_{\chi})$ coincides with Sobolev space defined by $\Delta_\G=\sum\limits_{i=1}^{n}X_{i}^{2}$. In this case $d$ is the Hausdorff dimension associated to the sub-Laplacian $\Delta_\G.$

Let us now recall the Hardy-Sobolev inequality from \cite{RY18a}, for general connected Lie groups. As before, $|x|_{CC}=d_{CC}(x,e)$ is the Carnot-Carath\'{e}odiry distance between $x$ and $e$.

\begin{thm}\label{Harsobinnthm}
Let $\G$ be a connected Lie group. Let $0\leq \beta < d$ and $1< p,q <\infty$.
\begin{itemize}
    \item[(a)] If $0< a < d$, then we have 
  \begin{equation}\label{HSinnon}
      \left\|\frac{u}{|x|_{CC}^{\frac{\beta}{q}}}\right\|_{L^{q}(\lambda)}\leq C \|u\|_{L^{p}_{a}(\lambda)},
  \end{equation}
  for all $q\geq p$ such that $\frac{1}{p}-\frac{1}{q}\leq \frac{a}{d}-\frac{\beta}{dq}$;
  \item[(b)] If $\frac{d}{p}\leq a$, then \eqref{HSinnon} holds for all $q\geq p$.
\end{itemize}
\end{thm}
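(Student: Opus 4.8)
The plan is to deduce the weighted estimate \eqref{HSinnon} from the \emph{unweighted} Sobolev embedding $L^p_a(\lambda)\hookrightarrow L^q(\lambda)$ (the case $\beta=0$, established in \cite{BPTV19,RY18}) together with a Stein--Weiss type weighted Hardy--Littlewood--Sobolev inequality on the metric measure space $(\G,d_{CC},\lambda)$, with the heat kernel of $\Delta_{\chi}$ as the main analytic input. The first step is to replace the Sobolev norm by a potential. Setting $g:=(\mathrm{Id}+\Delta_{\chi})^{a/2}u$, one has $g\in L^p(\lambda)$ with $\|g\|_{L^p(\lambda)}\approx\|u\|_{L^p_a(\lambda)}$, and $u=(\mathrm{Id}+\Delta_{\chi})^{-a/2}g$ is represented by an integral operator
\begin{equation*}
u(x)=\int_{\G}K_a(x,y)\,g(y)\,d\lambda(y),\qquad K_a=\frac{1}{\Gamma(a/2)}\int_0^\infty t^{\frac{a}{2}-1}e^{-t}p_t\,dt,
\end{equation*}
where $p_t$ is the heat kernel of $\Delta_{\chi}$ and the subordination formula exhibits $K_a$ as an inhomogeneous Bessel-type kernel. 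Since $\Delta_{\chi}$ is self-adjoint on $L^2(\mu_{\chi})$ and carries a drift, $K_a(x,y)$ is not a convolution kernel, so all estimates must be carried out on $(\G,d_{CC},\lambda)$ directly.

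Second, I would extract pointwise bounds on $K_a$ from Gaussian-type heat-kernel estimates. Feeding the on- and off-diagonal Gaussian bounds for $p_t$, together with the volume growth $V(r)\approx r^{d}$ for $r\le1$, into the subordination integral yields the local Riesz-type singularity $K_a(x,y)\lesssim d_{CC}(x,y)^{a-d}$ when $d_{CC}(x,y)\le1$, together with exponential decay for $d_{CC}(x,y)\ge1$ coming from the factor $e^{-t}$ and the global bound $V(r)\lesssim e^{Dr}$. The exponential tail is exactly what keeps the global (exponential-growth) regime harmless, so that the weighted mapping properties are governed only by the local regime, where the group behaves like a space of homogeneous dimension $d$.

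Third, I would prove the weighted $L^p(\lambda)\to L^q(\lambda;|x|_{CC}^{-\beta/q}\,d\lambda)$ boundedness of $g\mapsto \int_{\G} K_a(\cdot,y)g(y)\,d\lambda(y)$ by a Stein--Weiss inequality for the Riesz potential of order $a$ on a $d$-dimensional homogeneous space, with the single output weight $|x|_{CC}^{-\beta/q}$. The scaling balance for such an estimate is precisely $\frac{1}{p}-\frac{1}{q}=\frac{a}{d}-\frac{\beta}{dq}$, which is the equality case of the hypothesis; the inequality $\frac{1}{p}-\frac{1}{q}\le\frac{a}{d}-\frac{\beta}{dq}$ is the subcritical range and is absorbed because the inhomogeneous kernel $K_a$ gains local integrability (this is where the $\le$, rather than $=$, is used). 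The admissibility condition $0\le\beta/q<d/q$ of Stein--Weiss reduces exactly to the hypothesis $0\le\beta<d$. For part (b), when $a\ge d/p$ the singularity $d_{CC}(x,y)^{a-d}$ lies in $L^{p'}$ locally, hence globally by the exponential decay, so H\"older's inequality gives $u\in L^\infty$ up to the weight, and \eqref{HSinnon} then holds for every $q\ge p$.

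The main obstacle I anticipate is the simultaneous control of the two volume-growth regimes on a general, possibly non-unimodular, connected Lie group: one must set up the Stein--Weiss inequality intrinsically on $(\G,d_{CC},\lambda)$ rather than by Euclidean convolution, verify that the drift term in $\Delta_{\chi}$ does not spoil the Gaussian kernel bounds, and confirm that the exponential kernel decay genuinely confines the weighted analysis to the local $d$-dimensional scale. Once the local Stein--Weiss estimate and the kernel bounds are in place, matching the exponents is a direct scaling computation.
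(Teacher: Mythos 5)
You should first be aware that the paper does not prove Theorem \ref{Harsobinnthm} at all: it is recalled verbatim from \cite{RY18}, and the only argument the authors add is the one-sentence observation following the statement that, since $\|u\|_{L^{p}_{a'}(\lambda)}\lesssim\|u\|_{L^{p}_{a}(\lambda)}$ for $a'\leq a$, validity of \eqref{HSinnon} for some $a$ implies it for all larger $a$ (this is how they extend part (b) beyond the range stated in \cite{RY18}). Your plan --- reduce to the Bessel potential $(\mathrm{Id}+\Delta_{\chi})^{-a/2}$, extract a local Riesz singularity $d_{CC}(x,y)^{a-d}$ plus exponential off-diagonal decay from subordinated heat-kernel bounds, and run a Stein--Weiss argument with the single output weight $|x|_{CC}^{-\beta/q}$ --- is essentially the strategy of \cite{RY18} and \cite{BPTV19}, so the architecture is sound. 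But as written it is a plan rather than a proof: the two steps carrying all the difficulty are asserted, not executed, namely (i) Gaussian bounds for the heat kernel of the \emph{drift} operator $\Delta_{\chi}$ on a possibly non-unimodular group of exponential growth, including the character and modular-function factors that enter because $\Delta_{\chi}$ is self-adjoint on $L^{2}(\mu_{\chi})$ while the estimate is taken in $L^{p}(\lambda)$, and (ii) the intrinsic weighted Hardy--Littlewood--Sobolev inequality on $(\G,d_{CC},\lambda)$ across the two volume regimes.

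There is also a concrete gap in part (b). You argue that for $a\geq\frac{d}{p}$ the singularity $d_{CC}(x,y)^{a-d}$ lies in $L^{p'}_{\mathrm{loc}}$, so H\"older gives $u\in L^{\infty}$. Local $p'$-integrability requires $(d-a)p'<d$, i.e.\ $a>\frac{d}{p}$ \emph{strictly}; at the endpoint $a=\frac{d}{p}$, which the statement includes, the integral diverges logarithmically and your $L^{\infty}$ bound fails. Moreover, even where it holds, $u\in L^{\infty}$ alone does not give finiteness of $\||x|_{CC}^{-\beta/q}u\|_{L^{q}(\lambda)}$ on a group of infinite volume; you must interpolate with the $L^{p}$ information and use $\beta<d$ to integrate the weight near $e$. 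The clean fix is the monotonicity route: given $q\geq p$ and $0\leq\beta<d$, one has $\frac{d}{p}-\frac{d-\beta}{q}<\frac{d}{p}\leq a$, so one can choose $a'\in(0,\frac{d}{p})$ with $a'\geq\frac{d}{p}-\frac{d-\beta}{q}$ (if this lower bound is nonpositive, any small $a'>0$ works); then part (a) applies with $a'$ and the embedding $L^{p}_{a}(\lambda)\hookrightarrow L^{p}_{a'}(\lambda)$ yields \eqref{HSinnon} for $a$. This is precisely the observation the paper itself records after the theorem, and it disposes of part (b) without any endpoint analysis.
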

Although in \cite{RY18a}, this theorem was formulated for $a<d$ in Part (b), it is clear that if \eqref{HSinnon} holds for some $a$, it also holds for all larger $a$.

Now let us present the logarithmic weighted Sobolev inequality, for general Lie groups (compact and non-compact).

\begin{thm}\label{loghardysob}
Let $\mathbb{G}$ be a connected Lie group. 
 \begin{itemize}
     \item[(a)] If $1< p <\infty$ and  $a>0$ is such that $0\leq \beta <ap< d$,  %$\frac{1}{p}-\frac{1}{q}=\frac{a}{d}-\frac{\beta}{dq}$%, 
     then we have
     \begin{equation}\label{harsobinnon1}
         \int_{\mathbb{G}}\frac{|x|_{CC}^{-\frac{\beta p}{q}}|u|^{p}}{\||x|_{CC}^{-\frac{\beta}{q}} u\|^{p}_{L^{p}(\lambda)}}\log\left(\frac{|x|_{CC}^{-\frac{\beta p}{q}}|u|^{p}}{\||x|_{CC}^{-\frac{\beta}{q}} u\|^{p}_{L^{p}(\lambda)}}\right) d\lambda(x) \leq \frac{d-\beta}{ap-\beta}\log\left(C\frac{\|u\|^{p}_{L^{p}_{a}(\lambda)}}{\||x|_{CC}^{-\frac{\beta}{q}} u\|^{p}_{L^{p}(\lambda)}}\right),
     \end{equation}
     where $q=\frac{(d-\beta)p}{d-ap}$.
     \item[(b)] If $a\geq\frac{d}{p}$, $0\leq \beta<d$ and $q>p>1$, then we have 
     \begin{equation}\label{harsobinnon2}
         \int_{\mathbb{G}}\frac{|x|_{CC}^{-\frac{\beta p}{q}}|u|^{p}}{\||x|_{CC}^{-\frac{\beta}{q}} u\|^{p}_{L^{p}(\lambda)}}\log\left(\frac{|x|_{CC}^{-\frac{\beta p}{q}}|u|^{p}}{\||x|_{CC}^{-\frac{\beta}{q}} u\|^{p}_{L^{p}(\lambda)}}\right) d\lambda(x) \leq \frac{q}{q-p}\log\left(C\frac{\|u\|^{p}_{L^{p}_{a}(\lambda)}}{\||x|_{CC}^{-\frac{\beta}{q}} u\|^{p}_{L^{p}(\lambda)}}\right).
     \end{equation}
 \end{itemize}
 The constant $C$ in these inequalities may depend on $\G$, on the system of H\"ormander's vector fields, on $p,q,a,\beta$, but not on $u$. 
\end{thm}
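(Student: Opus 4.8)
The plan is to reuse the two-step strategy that already settled the graded-group analogue in Theorem \ref{wsobingrthm}: apply the logarithmic Hölder inequality of Lemma \ref{holder} to the weighted function $v:=|x|_{CC}^{-\beta/q}\,u$, and then estimate the emerging $L^q$-norm of $v$ via the Hardy--Sobolev inequality of Theorem \ref{Harsobinnthm}. Throughout, the abstract measure space $\X$ in Lemma \ref{holder} is specialised to $(\G,\lambda)$, the group equipped with its left Haar measure $\lambda$.

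For part (a) I would first extract the arithmetic consequences of $0\leq\beta<ap<d$ together with $q=\frac{(d-\beta)p}{d-ap}$. A direct computation gives $q-p=\frac{p(ap-\beta)}{d-ap}>0$, hence $1<p<q<\infty$, and therefore
$$
\frac{q}{q-p}=\frac{d-\beta}{ap-\beta}.
$$
The same value of $q$ also satisfies $\frac{1}{p}-\frac{1}{q}=\frac{a}{d}-\frac{\beta}{dq}$ with equality, which is precisely the admissibility condition for Theorem \ref{Harsobinnthm}(a); note that $ap<d$ and $p>1$ force $0<a<d$, so part (a) of that theorem indeed applies. Feeding $v$ into Lemma \ref{holder} and then replacing $\|v\|_{L^q(\lambda)}=\big\||x|_{CC}^{-\beta/q}u\big\|_{L^q(\lambda)}$ by $C\|u\|_{L^p_a(\lambda)}$ via \eqref{HSinnon} yields \eqref{harsobinnon1}, once the factor $\frac{q}{q-p}$ is rewritten as $\frac{d-\beta}{ap-\beta}$.

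For part (b) the argument is even shorter, since Theorem \ref{Harsobinnthm}(b) is valid for every $q\geq p$ as soon as $a\geq d/p$, with no constraint tying $q$ to $\beta$. Thus for an arbitrary fixed $q>p>1$ and $0\leq\beta<d$ I would again apply Lemma \ref{holder} to $v=|x|_{CC}^{-\beta/q}u$ and then \eqref{HSinnon}, this time keeping the prefactor $\frac{q}{q-p}$ unsimplified, which is exactly the form stated in \eqref{harsobinnon2}.

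The only step demanding a little care is checking that $v\in L^p(\lambda)\cap L^q(\lambda)$, as required to invoke Lemma \ref{holder}: the hypotheses furnish $|x|_{CC}^{-\beta/q}u\in L^p(\lambda)$ and $u\in L^p_a(\lambda)$ directly, while $v\in L^q(\lambda)$ is obtained a posteriori from the finiteness of the right-hand side of \eqref{HSinnon}. Beyond this I do not anticipate any real obstacle; the proof is a faithful transcription of the graded case, the substantive content being carried entirely by the already-established Lemma \ref{holder} and Theorem \ref{Harsobinnthm}, with the constant $C$ inheriting its dependence on $\G$, on the Hörmander system, and on $p,q,a,\beta$ from the latter.
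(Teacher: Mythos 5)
Your proposal is correct and follows essentially the same route as the paper's own proof: both compute $q-p=\frac{p(ap-\beta)}{d-ap}>0$ and $\frac{q}{q-p}=\frac{d-\beta}{ap-\beta}$, then apply the logarithmic H\"older inequality of Lemma \ref{holder} to $v=|x|_{CC}^{-\beta/q}u$ followed by the Hardy--Sobolev inequality \eqref{HSinnon}, with part (b) handled identically via Theorem \ref{Harsobinnthm}(b). Your additional remarks verifying the admissibility condition $\frac1p-\frac1q=\frac{a}{d}-\frac{\beta}{dq}$ and the membership $v\in L^p(\lambda)\cap L^q(\lambda)$ are consistent with what the paper implicitly uses.
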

\begin{proof}

 (a) Let us choose $q=\frac{(d-\beta)p}{d-ap}$, then by assumption $0\leq \beta <ap< d$, we have $q>p$ and by using this fact, we compute
 \begin{equation}\label{vychetstploghs}
     0<\frac{q}{q-p}=\frac{\frac{(d-\beta)p}{d-ap}}{\frac{(d-\beta)p}{d-ap}-p}
     =\frac{\frac{d-\beta}{d-ap}}{\frac{(d-\beta)}{d-ap}-1}
     =\frac{d-\beta}{ap-\beta}.
 \end{equation}
 %Also, from the last fact we have $ap>\beta$.
 We also note that we have
 $\frac{1}{p}-\frac{1}{q}= \frac{a}{d}-\frac{\beta}{dq}$ in Theorem \ref{Harsobinnthm}, (a).
 By using logarithmic H\"{o}lder's inequality with $|x|_{CC}^{-\frac{\beta}{q}}|u|\in L^{p}(\lambda)\cap L^{q}(\lambda)$ and $q>p$, we get
 \begin{equation}
     \begin{split}
         \int_{\mathbb{G}}\frac{|x|_{CC}^{-\frac{\beta p}{q}}|u|^{p}}{\||x|_{CC}^{-\frac{\beta}{q}} u\|^{p}_{L^{p}(\lambda)}}\log\left(\frac{|x|_{CC}^{-\frac{\beta p}{q}}|u|^{p}}{\||x|_{CC}^{-\frac{\beta}{q}} u\|^{p}_{L^{p}(\lambda)}}\right) &d\lambda(x)\leq \frac{q}{q-p}\log\left(\frac{\||x|_{CC}^{-\frac{\beta}{q}} u\|^{p}_{L^{q}(\lambda)}}{\||x|_{CC}^{-\frac{\beta}{q}} u\|^{p}_{L^{p}(\lambda)}}\right)\\&
         \stackrel{(a)\eqref{HSinnon}}\leq \frac{q}{q-p}\log\left(C\frac{\|u\|^{p}_{L^{p}_{a}(\lambda)}}{\||x|_{CC}^{-\frac{\beta}{q}} u\|^{p}_{L^{p}(\lambda)}}\right)\\&
         \stackrel{\eqref{vychetstploghs}}=\frac{d-\beta}{ap-\beta}\log\left(C\frac{\|u\|^{p}_{L^{p}_{a}(\lambda)}}{\||x|_{CC}^{-\frac{\beta}{q}} u\|^{p}_{L^{p}(\lambda)}}\right).
     \end{split}
 \end{equation}

(b) Similarly to the previous case (a) we easily show \eqref{harsobinnon2} by using Theorem \ref{Harsobinnthm}, Part  (b).
\end{proof}

\begin{rem}\label{REM:CCR}
According to \cite[Remark 1.2]{RY18a}, the statement of Theorem \ref{Harsobinnthm} also holds with the Carnot-Carath\'eodory distance replaced by the Riemannian distance. Consequently, 
Theorem \ref{loghardysob} and Theorem \ref{THM:Nash-gen} also hold with the Carnot-Carath\'eodory distance replaced by the Riemannian distance.
\end{rem}

Under assumptions of Theorem \ref{loghardysob} with $\beta=0$, we get the logarithmic Sobolev inequality in the following form:

\begin{cor}\label{logsobcor}
Let $\mathbb{G}$ be a connected Lie group with local dimension $d$. 
Then we have the following logarithmic Sobolev inequalities:
 \begin{itemize}
     \item[(a)] If $1< p <\infty$ and $0<a<\frac{d}{p}$, then we have
     \begin{equation}\label{sobinnon1}
         \int_{\mathbb{G}}\frac{|u|^{p}}{\|u\|^{p}_{L^{p}(\lambda)}}\log\left(\frac{|u|^{p}}{\|u\|^{p}_{L^{p}(\lambda)}}\right) d\lambda(x) \leq \frac{d}{ap}\log\left(C\frac{\|u\|^{p}_{L^{p}_{a}(\lambda)}}{\|u\|^{p}_{L^{p}(\lambda)}}\right),
     \end{equation}
     with $q=\frac{dp}{d-ap}$.
     \item[(b)] If $a\geq\frac{d}{p}$ and $q>p>1$, then we have 
     \begin{equation}\label{sobinnon2}
         \int_{\mathbb{G}}\frac{|u|^{p}}{\|u\|^{p}_{L^{p}(\lambda)}}\log\left(\frac{|u|^{p}}{\|u\|^{p}_{L^{p}(\lambda)}}\right) d\lambda(x) \leq \frac{q}{q-p}\log\left(C\frac{\|u\|^{p}_{L^{p}_{a}(\lambda)}}{\|u\|^{p}_{L^{p}(\lambda)}}\right),
     \end{equation}
      with the constant $C$ depending on $\G$, on the system of H\"ormander's vector fields, on $p,q,a$, but not on $u$. 
 \end{itemize}
\end{cor}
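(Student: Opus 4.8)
The plan is to obtain Corollary \ref{logsobcor} as the unweighted special case $\beta=0$ of Theorem \ref{loghardysob}, since setting $\beta=0$ makes the weight $|x|_{CC}^{-\beta/q}=1$ trivial and collapses the weighted statement onto the claimed one. For part (a) I would apply Theorem \ref{loghardysob}(a) with $\beta=0$: the admissibility constraint $0\leq\beta<ap<d$ becomes precisely $0<ap<d$, i.e.\ $0<a<\frac{d}{p}$, matching the hypothesis, while the exponent $q=\frac{(d-\beta)p}{d-ap}$ reduces to $q=\frac{dp}{d-ap}$ and the coefficient $\frac{d-\beta}{ap-\beta}$ reduces to $\frac{d}{ap}$. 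Substituting these into \eqref{harsobinnon1} and noting that $|x|_{CC}^{-\beta p/q}=1$ in both the integrand and the normalising denominator yields \eqref{sobinnon1} verbatim. For part (b) I would apply Theorem \ref{loghardysob}(b) with $\beta=0$: the hypotheses $a\geq\frac{d}{p}$ and $q>p>1$ are unchanged, the weight again disappears, and the factor $\frac{q}{q-p}$ is retained, giving \eqref{sobinnon2}.

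Alternatively, and equivalently, one can prove the corollary directly without passing through the weighted version. I would apply the logarithmic H\"older inequality of Lemma \ref{holder} to $u\in L^{p}(\lambda)\cap L^{q}(\lambda)$ with $1<p<q<\infty$, bounding the left-hand side by $\frac{q}{q-p}\log\big(\|u\|_{L^{q}(\lambda)}^{p}/\|u\|_{L^{p}(\lambda)}^{p}\big)$, and then estimate $\|u\|_{L^{q}(\lambda)}$ by $\|u\|_{L^{p}_{a}(\lambda)}$ using the $\beta=0$ case of the Hardy-Sobolev inequality in Theorem \ref{Harsobinnthm}. In case (a) the Sobolev exponent $q=\frac{dp}{d-ap}$ saturates the constraint $\frac{1}{p}-\frac{1}{q}=\frac{a}{d}$ and produces the coefficient $\frac{d}{ap}$ after the elementary simplification of $\frac{q}{q-p}$ carried out exactly as in \eqref{vychetstploghs}; in case (b) any $q>p$ is admissible since $a\geq\frac{d}{p}$, and the factor $\frac{q}{q-p}$ is kept unsimplified.

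There is essentially no obstacle here, as the statement is a genuine specialisation: the only things to verify are the arithmetic reduction of the constant at the chosen $q$ and the matching of the parameter ranges between Theorem \ref{loghardysob} (or Theorem \ref{Harsobinnthm}) and the corollary. The constant $C$ is inherited directly from the Hardy-Sobolev inequality and hence depends on $\G$, on the H\"ormander system of vector fields, and on $p,q,a$, but not on $u$, as asserted.
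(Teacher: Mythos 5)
Your proposal is correct and is exactly how the paper obtains this result: Corollary \ref{logsobcor} is stated as the $\beta=0$ specialisation of Theorem \ref{loghardysob}, with the same reduction of the exponent to $q=\frac{dp}{d-ap}$ and of the coefficient $\frac{d-\beta}{ap-\beta}$ to $\frac{d}{ap}$ in part (a), and with the factor $\frac{q}{q-p}$ retained in part (b). Your alternative direct argument via Lemma \ref{holder} and Theorem \ref{Harsobinnthm} is just the proof of Theorem \ref{loghardysob} unwound at $\beta=0$, so it is the same route in substance.
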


\begin{rem}
In Corollary \ref{logsobcor}, if $\G$ is non-compact, the constant $C$ in \eqref{sobinnon1} can be chosen to be of the form $C=A_{1}S(p)^p$, where  $A_{1}=A_{1}(\G,X)>0$ depends on the group $\G$ and on the H\"ormander's system of vector fields $X$, and $S(p)=\min\left\{\frac{\left(\frac{dp}{d-ap}\right)^{1-\frac{1}{p}}}{p-1},\left(\frac{p}{p-1}\right)^{\frac{1}{p}-\frac{a}{d}}(\frac{dp-d+ap}{d-ap})\right\}$. This follows from the constant presented in \cite[Theorem 3.1]{BPV21}, by taking 
$S(p)$ to be $S(p,q)$ in that paper, with
$q=\frac{dp}{d-ap}$. This condition comes from the equality $\frac{1}{p}-\frac{1}{q}= \frac{a}{d}$ with $\beta=0$ in Theorem \ref{Harsobinnthm}, (a).
In particular, it means that the constant $C$ in \eqref{sobinnon1} depends on the group structure, the H\"{o}r\-man\-der system $X$, and $p$.
\end{rem}

We do not have such argument for the constant $C$ in \eqref{sobinnon2}, but, by letting $q\to\infty$ we can make the constant in front of the logarithm as close to 1 as we want, namely: 

\begin{cor}\label{COR:apq}
Let $\mathbb{G}$ be a connected Lie group with local dimension $d$. Let $1<p<\infty$ and $a\geq\frac{d}{p}.$ Then for every $\epsilon>0$ there exists $C_\epsilon>0$ such that for all $u\not=0$ we have 
  \begin{equation}\label{sobinnon2-eps}
         \int_{\mathbb{G}}\frac{|u|^{p}}{\|u\|^{p}_{L^{p}(\lambda)}}\log\left(\frac{|u|^{p}}{\|u\|^{p}_{L^{p}(\lambda)}}\right) d\lambda(x) \leq (1+\epsilon)\log\left(C_\epsilon\frac{\|u\|^{p}_{L^{p}_{a}(\lambda)}}{\|u\|^{p}_{L^{p}(\lambda)}}\right).
     \end{equation}
\end{cor}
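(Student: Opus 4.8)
The plan is to read off the estimate directly from the inhomogeneous logarithmic Sobolev inequality \eqref{sobinnon2} of Corollary \ref{logsobcor}, Part (b), exploiting the freedom in the exponent $q$. For $a \ge \frac{d}{p}$ that corollary gives, for every $q > p > 1$,
\[
\int_{\mathbb{G}}\frac{|u|^{p}}{\|u\|^{p}_{L^{p}(\lambda)}}\log\left(\frac{|u|^{p}}{\|u\|^{p}_{L^{p}(\lambda)}}\right) d\lambda(x) \leq \frac{q}{q-p}\log\left(C\,\frac{\|u\|^{p}_{L^{p}_{a}(\lambda)}}{\|u\|^{p}_{L^{p}(\lambda)}}\right),
\]
with a constant $C=C(\G,X,p,q,a)$ independent of $u$. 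The front factor satisfies $\frac{q}{q-p}=\left(1-\frac{p}{q}\right)^{-1}\to 1$ as $q\to\infty$, so given $\epsilon>0$ I would first fix any $q$ large enough that $\frac{q}{q-p}\le 1+\epsilon$ (e.g. any $q\ge\frac{(1+\epsilon)p}{\epsilon}$), and freeze the associated constant $C$; this becomes the desired $C_\epsilon$.

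The one point requiring care is the passage from the factor $\frac{q}{q-p}$ to $1+\epsilon$: since these multiply a logarithm, the step $\frac{q}{q-p}\log(\cdot)\le(1+\epsilon)\log(\cdot)$ is valid only when the argument of the logarithm is at least $1$, that is, when the logarithm is nonnegative. I would secure this from two observations, both uniform in $u$. First, the ratio inside the logarithm is always $\ge 1$: since $\|u\|_{L^p_a(\lambda)}=\|u\|_{L^p(\lambda)}+\|\Delta^{\frac{a}{2}}_{\chi}u\|_{L^p(\lambda)}\ge\|u\|_{L^p(\lambda)}$, we have $\frac{\|u\|^{p}_{L^{p}_{a}(\lambda)}}{\|u\|^{p}_{L^{p}(\lambda)}}\ge 1$ for every nontrivial $u$. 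Second, without loss of generality $C\ge 1$: replacing $C$ by $\max\{C,1\}$ only enlarges the right-hand side of \eqref{sobinnon2} (the logarithm is increasing and $\frac{q}{q-p}>0$), so the inequality persists. Consequently the argument $C\,\frac{\|u\|^{p}_{L^{p}_{a}(\lambda)}}{\|u\|^{p}_{L^{p}(\lambda)}}\ge 1$, hence its logarithm is $\ge 0$, for all $u\neq 0$.

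With $q$ and $C=C_\epsilon\ge 1$ fixed as above, I would then simply chain the inequalities: the left-hand side is bounded by $\frac{q}{q-p}\log\!\big(C_\epsilon\,\|u\|^{p}_{L^{p}_{a}(\lambda)}/\|u\|^{p}_{L^{p}(\lambda)}\big)$, and since this logarithm is nonnegative and $\frac{q}{q-p}\le 1+\epsilon$, it is in turn bounded by $(1+\epsilon)\log\!\big(C_\epsilon\,\|u\|^{p}_{L^{p}_{a}(\lambda)}/\|u\|^{p}_{L^{p}(\lambda)}\big)$, which is exactly \eqref{sobinnon2-eps}. The main (and essentially only) obstacle is this sign bookkeeping on the logarithm; everything else is a direct limit in $q$ inside the already-established Part (b), so no new analytic input beyond Corollary \ref{logsobcor} is needed.
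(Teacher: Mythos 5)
Your proposal is correct and follows exactly the route the paper intends: the corollary is stated immediately after the remark that ``by letting $q\to\infty$ we can make the constant in front of the logarithm as close to 1 as we want,'' i.e.\ one fixes $q$ large in \eqref{sobinnon2} so that $\frac{q}{q-p}\le 1+\epsilon$ and absorbs the $q$-dependence into $C_\epsilon$. Your additional bookkeeping on the sign of the logarithm (using that the inhomogeneous norm satisfies $\|u\|_{L^p_a(\lambda)}\ge\|u\|_{L^p(\lambda)}$, so the ratio under the logarithm is at least $1$ once $C$ is replaced by $\max\{C,1\}$) is a genuinely needed detail that the paper leaves implicit, and you have handled it correctly.
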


\section{Nash inequality}

In this section we discuss the Nash inequality on general Lie groups, and on graded groups, and its immediate application to the decay of solutions to the heat equation for the sub-Laplacian on stratified groups. For stratified Lie groups, the Nash inequality would take the form
\begin{equation}
    \label{Nas}
    \|u\|_{L^2(\G)}^{2+\frac{4}{Q}} \leq C
    \|u\|_{L^1(\G)}^{\frac{4}{Q}}
    \|\nabla_H u\|_{L^2(\G)}^2. 
\end{equation}
For the Euclidean $\G=\mathbb R^n$ and $\nabla_H=\nabla$ being the full gradient, the inequality \eqref{Nas} was established by Nash in \cite{Nas58}, with a simple proof using the Euclidean Fourier analysis, credited to E. M. Stein. It is one of the very useful inequalities for problems in partial differential equations, and can viewed as the limiting $L^1$-case of the family of Gagliardo-Nirenberg inequalities. The best constant $C$ in \eqref{Nas} was determined by Carlen and Loss \cite{CL}, see also 
\cite{BDS20} for the further analysis of its extremisers and several different approaches. For sums of squares on unimodular Lie groups, equivalent conditions for the Nash inequality have been investigated e.g. in \cite[Theorem II.5.2]{VSCC93}, in particular, it implies a number of estimates for the heat operator $e^{-t\Delta_\G}$, see e.g. \cite[Theorem II.3.2]{VSCC93}. We refer to Davies \cite{Dav89} for a general setting and  relations to the hypercontractivity of the heat semigroup.

Let us show that \eqref{Nas} also allows for an elementary proof by a combination of H\"older and Sobolev inequalities. First, the H\"older inequality implies the inequality
\begin{equation}
    \label{EQ:Nash-Holder}
    \|u\|_{L^2(\G)}^2\leq \|u\|_{L^1(\G)}^{\frac{4}{Q+2}}
    \|u\|_{L^{\frac{2Q}{Q-2}}(\G)}^{\frac{2Q}{Q+2}}.
\end{equation}
Consequently, using Theorem \ref{GN}, we have that 
\begin{equation}
    \label{EQ:Nash-Holder2}
    \|u\|_{L^{\frac{2Q}{Q-2}}(\G)}^{2}\leq 
    C_{S}\|\mathcal{R}^{\frac{1}{\nu}} u\|_{L^2(\G)}^2,
\end{equation}
for any Rockland operator $\mathcal{R}$ of degree $\nu$,  and 
the constant $C_{S}=C^{\frac{Q-2}{Q}}_{GN,\mathcal{R},1,0,2,\frac{2Q}{Q-2}}$ in \eqref{bestcongn}.
Consequently, taking $\mathcal{R}=-\Delta_\G$ to be a sub-Laplacian on $\G$, and using the identity \eqref{lapfor}, we obtain \eqref{Nas}. The above argument also gives \eqref{Nash-final2t} with $a=1$.

On the Euclidean space, again it is well-known that the Nash inequality is equivalent to the $L^2$-log-Sobolev inequality. This is also the case in our setting on general graded Lie groups. In fact, our proof of Corollary \ref{COR:Lpspb123} shows how Nash inequality would imply the $L^2$-log-Sobolev inequality. The converse argument is given in the proof of the next theorem, which gives the Nash inequality as its special case with $a=1$. In particular, if $\G$ is a stratified group, the following theorem with $a=1$ and $\mathcal{R}$ being the sub-Laplacian implies \eqref{Nas} since 
\begin{equation}\label{lapfor}
\|(-\Delta_\G)^{1/2} u\|_{L^2(\G)}=\||\nabla_{H}| u\|_{L^2(\G)}=\|\nabla_{H}u\|_{L^2(\G)}\,,
\end{equation}
where $\nabla_{H}$ denotes the horizontal gradient on $\G$.
\begin{thm}\label{THM:Nash}
Let $\G$ be a graded group with homogeneous dimension $Q$ and let $\mathcal{R}$ be a positive Rockland operator of homogeneous degree $\nu$. Let $0\leq \beta<2a<Q.$ Then, Nash's inequality is given by 
 \begin{equation}\label{Nash-final2t}
    ||\cdot|^{-\frac{\beta(Q-2a)}{2(Q-\beta)}}u\|_{L^2(\G)}^{2+\frac{2(2a-\beta)}{Q-\beta}} \leq C \||\cdot|^{-\frac{\beta(Q-2a)}{2(Q-\beta)}}u\|_{L^1(\lambda)}^{\frac{2(2a-\beta)}{Q-\beta}}\|u\|^{2}_{\dot{L}^2_a(\lambda)},
\end{equation}
 where $C>0$ is independent of $u$.
 In particular, for $\beta=0$ and $0<a<\frac{Q}{2}$, we have
\begin{equation}\label{Nash.g}
 \|u\|_{L^2(\G)}^{2+\frac{4a}{Q}} \leq A_2 \|u\|_{L^1(\G)}^{\frac{4a}{Q}}\|u\|^{2}_{\dot{L}^2_a(\G)}\,,
 \end{equation}
%\begin{equation}
%    \label{Nash.g}
 %   \left[\int_{\G} |u|^2\,dx \right]^{{1+\frac{2a}{Q}}} \leq A_2 left[ \int_{\G}|u|\,dx\right]^{\frac{4a}{Q}}
 %   \int_{\G} |\mathcal{R}^{\frac{a}{\nu}}u|^2\,dx \,,
%\end{equation}
 where $A_2$ is considered as in \eqref{CinfGN} for $p=2$.
\end{thm}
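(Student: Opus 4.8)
The plan is to reduce the weighted inequality \eqref{Nash-final2t} to a one-line interpolation between the $L^1$, $L^2$ and critical (weighted) $L^q$ norms of a single auxiliary function, and then to replace the $L^q$ norm by the homogeneous Sobolev norm through the Hardy--Sobolev inequality of Theorem \ref{THM:HSgraded}. The key preliminary observation is that the weight exponent $s:=\frac{\beta(Q-2a)}{2(Q-\beta)}$ occurring on both the $L^2$ and the $L^1$ sides of \eqref{Nash-final2t} equals $\beta/q$ for the value $q:=\frac{2(Q-\beta)}{Q-2a}$, and that this same $q$ is precisely the one for which the balance condition $\frac1p-\frac1q=\frac aQ-\frac{\beta}{qQ}$ of Theorem \ref{THM:HSgraded} is satisfied with $p=2$. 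Hence, setting $v:=|x|^{-s}u=|x|^{-\beta/q}u$, every norm entering \eqref{Nash-final2t} and the Hardy--Sobolev estimate is a norm of the one function $v$.

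First I would apply the elementary H\"older interpolation inequality to $v$,
\[
\|v\|_{L^2(\G)}\le \|v\|_{L^1(\G)}^{1-\theta}\,\|v\|_{L^q(\G)}^{\theta},
\qquad \tfrac12=(1-\theta)+\tfrac{\theta}{q},
\]
which for the above $q$ gives $\theta=\frac{q}{2(q-1)}$. Next I would apply Theorem \ref{THM:HSgraded} with $p=2$ in the form
\[
\|v\|_{L^q(\G)}=\bigl\||x|^{-\beta/q}u\bigr\|_{L^q(\G)}\le C\,\|\R^{\frac a\nu}u\|_{L^2(\G)}=C\,\|u\|_{\dot L^2_a(\G)};
\]
its hypotheses hold because $0\le\beta<2a<Q$ forces $0<2a<Q$, $0\le\beta<Q$ and $q>2$. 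Substituting this into the interpolation bound and raising the result to the power $2/\theta=\frac{4(q-1)}{q}$ produces \eqref{Nash-final2t}.

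The one genuinely non-trivial step is the exponent bookkeeping, which I expect to be the main (though purely computational) obstacle. With $q=\frac{2(Q-\beta)}{Q-2a}$ one has $q-2=\frac{2(2a-\beta)}{Q-2a}$ and $q-1=\frac{Q+2a-2\beta}{Q-2a}$, so that the exponent $\frac{4(q-1)}{q}$ of $\|v\|_{L^2(\G)}$ collapses to $2+\frac{2(2a-\beta)}{Q-\beta}$, and the exponent $\frac{2(q-2)}{q}$ of $\|v\|_{L^1(\G)}=\||x|^{-s}u\|_{L^1(\G)}$ collapses to $\frac{2(2a-\beta)}{Q-\beta}$; checking these identities, together with the weight identity $\beta/q=s$, is what makes the displayed exponents in \eqref{Nash-final2t} come out exactly.

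It remains to justify the sharp constant in the unweighted case $\beta=0$. Here, rather than fixing $q$ at the critical value, I would keep $q\in\left(2,\frac{2Q}{Q-2a}\right)$ arbitrary and combine the same H\"older interpolation with the Gagliardo--Nirenberg inequality \eqref{gnin1} (taken with $a_2=0$, $p=2$) in place of Hardy--Sobolev. After the identical rearrangement the constant appears as $\bigl(C_{GN,\R,a,2,q}\bigr)^{\frac{4a}{Q(q-2)}}$, and minimising over the admissible range of $q$ yields exactly the constant $A_2$ of \eqref{CinfGN} with $p=2$, which establishes \eqref{Nash.g}.
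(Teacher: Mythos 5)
Your proof is correct, and the exponent bookkeeping checks out: with $q=\frac{2(Q-\beta)}{Q-2a}$ one indeed has $\frac{\beta}{q}=\frac{\beta(Q-2a)}{2(Q-\beta)}$, the balance condition $\frac12-\frac1q=\frac aQ-\frac{\beta}{qQ}$ of Theorem \ref{THM:HSgraded} holds with $p=2$, and raising the interpolation bound to the power $\frac{4(q-1)}{q}=2+\frac{2(2a-\beta)}{Q-\beta}$ reproduces exactly the exponents of \eqref{Nash-final2t}; likewise your Gagliardo--Nirenberg variant for $\beta=0$ yields the constant $(C_{GN,\R,a,2,q})^{\frac{4a}{Q(q-2)}}$ and hence $A_2$ after minimising over $q$. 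However, your route is genuinely different from the paper's proof. The paper deliberately derives the Nash inequality \emph{from} the weighted log-Sobolev inequality of Theorem \ref{wsobingrthm}: it sets $v=|x|^{-\beta/q}u$, observes that $|v|^2\|v\|_{L^2}^{-2}\,dx$ is a probability measure, applies Jensen's inequality to $h(t)=\log\frac1t$ to obtain $\log\bigl(\|v\|_{L^2}^2/\|v\|_{L^1}\bigr)\le\int_\G\frac{|v|^2}{\|v\|_{L^2}^2}\log|v|\,dx$, and then feeds in the log-Sobolev bound and rearranges logarithms. The point of that choice is expository: the paper wants to exhibit the implication ``log-Sobolev $\Rightarrow$ Nash'' (the converse direction being contained in the proof of Corollary \ref{COR:Lpspb123}), so that the two inequalities are seen to be equivalent on graded groups as in the Euclidean case. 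Your argument is instead the direct ``elementary'' proof by H\"older interpolation against $L^1$ plus a Sobolev-type embedding --- which the paper itself sketches in the discussion preceding the theorem (see \eqref{EQ:Nash-Holder}--\eqref{EQ:Nash-Holder2}) but only for $a=1$, $\beta=0$; you have carried it out in the full weighted generality. What your approach buys is brevity and independence from the logarithmic machinery; what the paper's approach buys is the explicit demonstration of the Nash/log-Sobolev equivalence. One cosmetic remark: Lemma \ref{weightedholder} is stated for $1<p$, so when you interpolate $L^2$ between $L^1$ and $L^q$ you are using the endpoint case $p=1$ of that lemma; this is harmless (it is plain H\"older's inequality, and the paper uses it in the same way in \eqref{EQ:Nash-Holder}), but it is worth saying explicitly.
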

\begin{proof}
Let us choose $q=\frac{2(Q-\beta)}{Q-2a}$ and $v(x)=|x|^{-\frac{\beta}{q}}u(x).$ Observe that $|v|^2 \|v\|^{-2}_{L^2(\G)}\,dx$ is a probability measure on $\G$. Under this observation, an application of Jensen's inequality to the convex function $h(u)=\log \frac{1}{u}$  yields 
 \begin{eqnarray*}
\log \left(\frac{\|v\|^{2}_{L^2(\G)}}{\|v\|_{L^1(\G)}} \right) & = & h \left( \frac{\|v\|_{L^1(\G)}}{\|v\|^{2}_{L^2(\G)}}\right)=h \left( \int_{\G}\frac{1}{|v|}|v|^2 \|v\|^{-2}_{L^2(\G)}\,dx\right)\\
& \leq & \int_{\G} h \left( \frac{1}{|v|}\right) |v|^2 \|v\|^{-2}_{L^2(\G)}\,dx=\int_{\G} \frac{|v|^2}{\|v\|^{2}_{L^2(\G)}}\log |v| \,dx\,.
 \end{eqnarray*}
 The latter, combined with the weighted log-Sobolev inequality in Theorem \ref{wsobingrthm} for $p=2$, using $Q>2a$, i.e., with the estimate 
 \[
 \int_{\G} \frac{|v|^2}{\|v\|^{2}_{L^2(\G)}}\log |v| \,dx \leq \log \|v\|_{L^2(\G)}+\frac{Q-\beta}{2(2a-\beta)} \log \left( C \frac{\|u\|^{2}_{\dot{L}^2_a(\G)}}{\|v\|^{2}_{L^2(\G)}}\right)\,,
 \]
  using properties of the logarithm, gives 
 \[
 \left(1+\frac{Q-\beta}{2a-\beta} \right) \log \|v\|_{L^2(\G)} \leq \frac{Q-\beta}{2(2a-\beta)} \log \left(C \|v\|_{L^1(\G)}^{\frac{2(2a-\beta)}{Q-\beta}}\|u\|^{2}_{\dot{L}^2_a(\G)} \right)\,,
 \]
 which is equivalent to 
 \begin{equation}\label{Nash-final2}
 \||x|^{-\frac{\beta(Q-2a)}{2(Q-\beta)}}u\|_{L^2(\G)}^{2+\frac{2(2a-\beta)}{Q-\beta}} \leq C \||x|^{-\frac{\beta(Q-2a)}{2(Q-\beta)}}u\|_{L^1(\G)}^{\frac{2(2a-\beta)}{Q-\beta}}\|u\|^{2}_{\dot{L}^2_a(\G)}\,.
 \end{equation}
This completes the proof of \eqref{Nash-final2t}. To get the constant $A_2$ in \eqref{Nash.g}, we can do the same argument but use Corollary \ref{COR:Lpspb123} instead of Theorem \ref{wsobingrthm}, with $A_2$ is given by \eqref{CinfGN} for $p=2$.
\end{proof}

As an immediate application of the Nash inequality, one can compute the decay rate for the heat equation for the sub-Laplacian, following Nash's argument. We give a short application just to illustrate such idea, and refer to more general related analysis in the context of symmetric Markovian semigroups to e.g. \cite[Section II.5]{VSCC93}.

\begin{cor}\label{cor:par}
Let $\G$ be a stratified Lie group of homogeneous dimension $Q\geq 3$, and let $\Delta_\G$ be a sub-Laplacian on $\G$.
Let $u_0\in L^1(\G)\cap L^2(\G)$ be non-negative $u_0\geq 0$.
Then the solution $u$  to the heat equation 
\begin{equation}
    \label{heat.eq}
    \partial_t u=\Delta_\G u,\quad u(0,x)=u_0(x),
\end{equation}
satisfies
\begin{equation}
    \label{heatest}
    \|u(t,\cdot)\|_{L^2(\G)}\leq \left( \|u_0\|_{L^2(\G)}^{-\frac{4}{Q}}
    +\frac{4}{QA_2}\|u_0\|_{L^1(\G)}^{-\frac{4}{Q}}t\right)^{-\frac{Q}{4}},
\end{equation}
for all $t\geq 0,$ where $A_2$ is a constant in \eqref{Nash.g}.
\end{cor}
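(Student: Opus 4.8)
The plan is to follow Nash's classical energy argument, using the stratified Nash inequality \eqref{Nas} (equivalently \eqref{Nash.g} with $a=1$ and $\mathcal{R}=-\Delta_\G$, so that $\|u\|_{\dot{L}^2_1(\G)}=\|\nabla_H u\|_{L^2(\G)}$) as the key input. First I would introduce the energy $E(t):=\|u(t,\cdot)\|_{L^2(\G)}^2$ and differentiate it in time. Differentiating under the integral sign and integrating by parts, using the self-adjointness of $\Delta_\G$, gives
\[
E'(t)=2\int_\G u\,\partial_t u\,dx=2\int_\G u\,\Delta_\G u\,dx=-2\int_\G |\nabla_H u|^2\,dx,
\]
so that $E'(t)=-2\|\nabla_H u(t,\cdot)\|_{L^2(\G)}^2\le 0$; in particular the $L^2$-norm is nonincreasing.

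Next I would record that the $L^1$-mass does not increase along the flow. Since $u_0\ge 0$, the positivity of the heat kernel keeps $u(t,\cdot)\ge 0$, and the conservativeness of the heat semigroup $e^{t\Delta_\G}$ on a stratified group yields $\|u(t,\cdot)\|_{L^1(\G)}\le\|u_0\|_{L^1(\G)}$ for all $t\ge 0$ (in fact with equality). Writing $M:=\|u_0\|_{L^1(\G)}$ and applying \eqref{Nas} at time $t$, I obtain the lower bound
\[
\|\nabla_H u(t,\cdot)\|_{L^2(\G)}^2\ge \frac{1}{A_2\, \|u(t,\cdot)\|_{L^1(\G)}^{4/Q}}\,E(t)^{1+\frac{2}{Q}}\ge \frac{1}{A_2\, M^{4/Q}}\,E(t)^{1+\frac{2}{Q}},
\]
which, combined with the energy identity, produces the autonomous differential inequality
\[
E'(t)\le -\frac{2}{A_2\, M^{4/Q}}\,E(t)^{1+\frac{2}{Q}}.
\]

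Finally I would integrate this scalar inequality. Setting $\alpha:=2/Q$ and $c:=2A_2^{-1}M^{-4/Q}$, the inequality reads $E'\le -c\,E^{1+\alpha}$; dividing by $E^{1+\alpha}$ and rewriting it as $\frac{d}{dt}\bigl(E^{-\alpha}\bigr)\ge \alpha c$, I integrate from $0$ to $t$ to get $E(t)^{-\alpha}\ge E(0)^{-\alpha}+\alpha c\,t$. Substituting $E(0)=\|u_0\|_{L^2(\G)}^2$, so that $E(0)^{-\alpha}=\|u_0\|_{L^2(\G)}^{-4/Q}$, together with $\alpha c=\frac{4}{QA_2}\|u_0\|_{L^1(\G)}^{-4/Q}$, and then raising the resulting bound on $E(t)=\|u(t,\cdot)\|_{L^2(\G)}^2$ to the power $1/2$, I arrive exactly at \eqref{heatest}.

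The routine steps (differentiation under the integral, the boundary-free integration by parts, and solving the scalar ODE inequality) are standard; the step I expect to require the most care is the control of the $L^1$-mass, namely the preservation of nonnegativity together with the conservativeness of the sub-Laplacian heat semigroup on $\G$, since it is precisely this that allows $\|u_0\|_{L^1(\G)}$ to enter \eqref{heatest}. One also needs enough regularity and decay of $u(t,\cdot)$, for $t>0$, to justify the differentiation of $E$ and the absence of boundary terms, both of which follow from the smoothing properties of $e^{t\Delta_\G}$.
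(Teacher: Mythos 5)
Your proposal is correct and follows essentially the same route as the paper: positivity and mass conservation of the heat semigroup, the energy identity $\frac{d}{dt}\|u(t,\cdot)\|_{L^2(\G)}^2=-2\|\nabla_H u(t,\cdot)\|_{L^2(\G)}^2$, the Nash inequality \eqref{Nas} to close the differential inequality, and integration of the resulting ODE for $E^{-2/Q}$. Your bookkeeping of the exponents and of the constant $\alpha c=\frac{4}{QA_2}\|u_0\|_{L^1(\G)}^{-4/Q}$ is in fact slightly cleaner than the paper's own display, which contains a couple of typographical slips in the intermediate steps.
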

\begin{proof}
Let $\G$ be a stratified Lie group and let $\Delta_\G$ be a sub-Laplacian on $\G$.
Let $u$ be the solution to the heat equation \eqref{heat.eq}.
If $u_0\in L^1(\G)\cap L^2(\G)$ is non-negative $u_0\geq 0$, then also $u(t,x)\geq 0$ due to the positivity of the heat kernel $h_t$, see e.g. \cite[p. 48]{VSCC93}. Consequently, as in the Euclidean case, we have the mass conservation
$\|u(t,\cdot)\|_{L^1}=\|u_0\|_{L^1}.$ Indeed, we have
$$
\int_\G u(t,x) dx=\int_\G\int_\G h_t(x y^{-1})u_0(y) dydx=\int_\G u_0(y) dy
$$
in view of the Fubini theorem and the property $\|h_t\|_{L^1}=1.$ Now, multiplying by $u$ the heat equation \eqref{heat.eq}, and integrating over $\G$ we get 
\[
\langle \partial_t u(t,\cdot), u(t,\cdot) \rangle_{L^2(\G)}+\langle -\Delta_\G u(t,\cdot), u(t,\cdot) \rangle_{L^2(\G)}=0\,,
\]
or, in view of \eqref{lapfor},
\[
\frac{d}{dt}\|u(t,\cdot)\|_{L^2(\G)}=-2\|\nabla_H u(t,\cdot)\|_{L^2(\G)}\,.
\]
If we denote by $y(t):=\|u(t,\cdot)\|_{L^2(\G)}$, an application of Nash's inequality \eqref{Nas} (or \eqref{Nash-final2t}) gives
\[
y' \leq -2A_2^{-1}\|u_0\|_{L^1(\G)}^{-\frac{4}{Q}}y^{1+\frac{2}{Q}}\,,
\]
which, after integration on $t \geq 0$, implies that the solution to \eqref{heat.eq} satisfies the estimate 
\[
\|u(t,\cdot)\|_{L^2(\G)}\leq \left(\|u_0\|_{L^2(\G)}^{-\frac{4}{Q}}+\frac{4}{QA_2}\|u_0\|_{L^1(\G)}^{-\frac{Q}{4}}t \right)^{-\frac{Q}{4}}\,,
\]
and this finishes the proof. 
\end{proof}
 Let us now give the weighted Nash inequality on general Lie groups, where, as before, $|x|_{CC}=d_{CC}(x,e)$ is the Carnot-Carath\'eodory distance between $x$ and $e$, see \eqref{EQ:dcc} for details. However, we also note that the following estimates also hold true with the Carnot-Carath\'eodory distance replaced by the Riemannian distance, see Remark \ref{REM:CCR}.
 
 \begin{thm}\label{THM:Nash-gen}
 Let $\G$ be a connected Lie group with local dimension $d$. Assume that  
  $0\leq \beta <2a<d$. %and $q=\frac{2(d-\beta)}{d-2a}$% 
  Then, the weighted Nash inequality is given by 
 \begin{equation}\label{Nash-final2th}
 \||\cdot|_{CC}^{-\frac{\beta(d-2a)}{2(d-\beta)}}u\|_{L^2(\lambda)}^{2+\frac{2(2a-\beta)}{d-\beta}} \leq C \||\cdot|_{CC}^{-\frac{\beta(d-2a)}{2(d-\beta)}}u\|_{L^1(\lambda)}^{\frac{2(2a-\beta)}{d-\beta}}\|u\|^{2}_{L^2_a(\lambda)}.
 \end{equation}
 In particular, for $\beta=0$ and $0<a<\frac{d}{2}$, we have the Nash inequality
  \begin{equation}\label{Nash-final2th0}
 \|u\|_{L^2(\lambda)}^{2+\frac{4a}{d}} \leq C \|u\|_{L^1(\lambda)}^{\frac{4a}{d}}\|u\|^{2}_{L^2_a(\lambda)}.
 \end{equation}
 
 Moreover, if $a\geq\frac{d}{2}$ and $0\leq \beta<d$, then for every $q>2$ the weighted Nash inequality is given by
 \begin{equation}
     \||\cdot|_{CC}^{-\frac{\beta}{q}}u\|^{2+\frac{2(q-2)}{q}}_{L^{2}(\lambda)}\leq C\||\cdot|_{CC}^{-\frac{\beta}{q}}u\|^{\frac{2(q-2)}{q}}_{L^{1}(\lambda)}\|u\|^{2}_{L^{2}_{a}(\lambda)}.
 \end{equation}
 In particular, for $\beta=0$ and any $a\geq\frac{d}{2}$ and $q>2$, we have the Nash type inequality
  \begin{equation}
     \|u\|^{2+\frac{2(q-2)}{q}}_{L^{2}(\lambda)}\leq C\|u\|^{\frac{2(q-2)}{q}}_{L^{1}(\lambda)}\|u\|^{2}_{L^{2}_{a}(\lambda)}.
 \end{equation}
 \end{thm}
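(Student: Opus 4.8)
The plan is to adapt the proof of Theorem~\ref{THM:Nash} essentially verbatim, the only structural change being that the weighted log-Sobolev inequality on graded groups (Theorem~\ref{wsobingrthm}) is replaced by its general-Lie-group counterpart, Theorem~\ref{loghardysob}. Accordingly, throughout the argument one works with the left Haar measure $\lambda$, the Carnot--Carath\'eodory distance $|x|_{CC}$, the local dimension $d$ in place of the homogeneous dimension $Q$, and the inhomogeneous Sobolev norm $\|u\|_{L^2_a(\lambda)}$; the last is forced because in this generality Theorem~\ref{Harsobinnthm} only supplies inhomogeneous Hardy--Sobolev estimates.

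First I would treat part~(a), where $0\le\beta<2a<d$. Choose $q=\frac{2(d-\beta)}{d-2a}$, which is exactly the exponent prescribed in Theorem~\ref{loghardysob}(a) for $p=2$, so that the admissibility relation $\frac12-\frac1q=\frac{a}{d}-\frac{\beta}{dq}$ holds and $\frac{\beta}{q}=\frac{\beta(d-2a)}{2(d-\beta)}$. Setting $v(x)=|x|_{CC}^{-\beta/q}u(x)$, the measure $|v|^2\|v\|_{L^2(\lambda)}^{-2}\,d\lambda$ is a probability measure on $\G$, and Jensen's inequality applied to the convex function $h(s)=\log(1/s)$ yields the lower bound
\[
\log\!\left(\frac{\|v\|_{L^2(\lambda)}^2}{\|v\|_{L^1(\lambda)}}\right)\le\int_{\G}\frac{|v|^2}{\|v\|_{L^2(\lambda)}^2}\log|v|\,d\lambda,
\]
just as in the proof of Theorem~\ref{THM:Nash}.

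Next I would insert the matching upper bound. Applying Theorem~\ref{loghardysob}(a) with $p=2$ to $v$, whose left-hand side is precisely the normalised weighted entropy of $v$ and whose constant in front of the logarithm is $\frac{d-\beta}{2a-\beta}$, and using the elementary identity that turns $\log(|v|^2/\|v\|_{L^2(\lambda)}^2)$ into $2\log|v|$ (contributing a factor $\tfrac12$ and an additive $\log\|v\|_{L^2(\lambda)}$), I obtain
\[
\int_{\G}\frac{|v|^2}{\|v\|_{L^2(\lambda)}^2}\log|v|\,d\lambda\le\log\|v\|_{L^2(\lambda)}+\frac{d-\beta}{2(2a-\beta)}\log\!\left(C\frac{\|u\|_{L^2_a(\lambda)}^2}{\|v\|_{L^2(\lambda)}^2}\right).
\]
Chaining the two displays, gathering every occurrence of $\log\|v\|_{L^2(\lambda)}$ on the left with total coefficient $1+\frac{d-\beta}{2a-\beta}$ and absorbing $\log\|v\|_{L^1(\lambda)}$ into the logarithm on the right, I would multiply through by $\frac{2(2a-\beta)}{d-\beta}$ so that the coefficient becomes $2+\frac{2(2a-\beta)}{d-\beta}$, and finally exponentiate. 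Since $v=|x|_{CC}^{-\beta(d-2a)/(2(d-\beta))}u$, this is exactly \eqref{Nash-final2th}, and the specialisation $\beta=0$ gives \eqref{Nash-final2th0}.

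The remaining range $a\ge\frac d2$ is handled in the same way, the only change being that Theorem~\ref{loghardysob}(b) is used in place of part~(a): there $q>2$ is a free parameter and the constant in front of the logarithm is $\frac{q}{q-2}$, so the identical bookkeeping replaces $\frac{d-\beta}{2a-\beta}$ by $\frac{q}{q-2}$ and produces the exponent $2+\frac{2(q-2)}{q}$ together with the stated weighted and unweighted Nash-type inequalities. I expect no genuine analytic obstacle here; the one point demanding care is purely algebraic, namely ensuring that the single weight $|x|_{CC}^{-\beta/q}$ is transported consistently into both the $L^1$ and $L^2$ norms of $v$, and checking the exponent identity $\frac{2(2a-\beta)}{d-\beta}\bigl(1+\frac{d-\beta}{2a-\beta}\bigr)=2+\frac{2(2a-\beta)}{d-\beta}$.
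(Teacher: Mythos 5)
Your proposal is correct and follows essentially the same route as the paper: the paper's own proof also chooses $q=\frac{2(d-\beta)}{d-2a}$, sets $v=|x|_{CC}^{-\beta/q}u$, applies Jensen's inequality with $h(s)=\log(1/s)$ against the probability measure $|v|^2\|v\|_{L^2(\lambda)}^{-2}\,d\lambda$, and then invokes Theorem \ref{loghardysob} (part (a) for $0\le\beta<2a<d$, part (b) for $a\ge d/2$) before doing exactly the bookkeeping you describe. The algebraic check $\frac{2(2a-\beta)}{d-\beta}\bigl(1+\frac{d-\beta}{2a-\beta}\bigr)=2+\frac{2(2a-\beta)}{d-\beta}$ and the exponent identity $\frac{\beta}{q}=\frac{\beta(d-2a)}{2(d-\beta)}$ both hold, so there is nothing to add.
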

 
 \begin{proof}
 The proof of this theorem is similar to that of Theorem \ref{THM:Nash}, but instead of Theorem \ref{wsobingrthm} we use Theorem \ref{loghardysob}. %For brevity, we may write $dx$ for the left Haar measure $d\lambda(x)$. 
 Then, we choose $q=\frac{2(d-\beta)}{d-2a}$. We denote $v(x)=|x|_{CC}^{-\frac{\beta}{q}}u(x).$
 
 Observe that $|v|^{2}=||x|_{CC}^{-\frac{\beta}{q}}u|^2$ and $\frac{|v|^{2}}{\|v\|^{2}_{L^{2}(\lambda)}}d\lambda(x)$ is a probability measure with respect to left Haar measure on $\G$. Under this observation, an application of Jensen's inequality to the convex function $h(v)=\log \frac{1}{v}$  yields 
 \begin{eqnarray}\label{nashjensen1}
\log \left(\frac{\|v\|^{2}_{L^2(\lambda)}}{\|v\|_{L^1(\lambda)}} \right) & = & h \left( \frac{\|v\|_{L^1(\lambda)}}{\|v\|^{2}_{L^2(\lambda)}}\right)=h \left( \int_{\G}\frac{1}{|v|}|v|^2 \|v\|^{-2}_{L^2(\lambda)}\,d\lambda(x)\right)\\
& \leq & \int_{\G} h \left( \frac{1}{|v|}\right) |v|^2 \|v\|^{-2}_{L^2(\lambda)}\,d\lambda(x)=\int_{\G} \frac{|v|^2}{\|v\|^{2}_{L^2(\lambda)}}\log |v| \,d\lambda(x)\,.\nonumber
 \end{eqnarray}
 The latter, combined with the log-Hardy-Sobolev inequality in Theorem \ref{loghardysob} for $p=2$, i.e., with the estimate 
 \[
 \int_{\G} \frac{|v|^2}{\|v\|^{2}_{L^2(\lambda)}}\log |v| \,d\lambda(x) \leq \log \|v\|_{L^2(\lambda)}+\frac{d-\beta}{2(2a-\beta)} \log \left( C \frac{\|u\|^{2}_{L^2_a(\lambda)}}{\|v\|^{2}_{L^2(\lambda)}}\right)\,,
 \]
 where $C>0$ for $p=2$, using properties of the logarithm, gives 
 \[
 \left(1+\frac{d-\beta}{2a-\beta} \right) \log \|v\|_{L^2(\lambda)} \leq \frac{d-\beta}{2(2a-\beta)} \log \left(C \|v\|_{L^1(\lambda)}^{\frac{2(2a-\beta)}{d-\beta}}\|u\|^{2}_{L^2_a(\lambda)} \right)\,,
 \]
 which is equivalent to 
 \begin{equation}\label{Nash-final23}
 \||\cdot|_{CC}^{-\frac{\beta}{q}}u\|_{L^2(\lambda)}^{\frac{2(2a-\beta)}{d-\beta}+2} \leq C \||\cdot|_{CC}^{-\frac{\beta}{q}}u\|_{L^1(\lambda)}^{\frac{2(2a-\beta)}{d-\beta}}\|u\|^{2}_{L^2_a(\lambda)}\,.
 \end{equation}
 
 Let us now prove the case $q>2$ and $a\geq\frac{d}{2}$. Similarly, let us choose $|v|^{2}=||x|_{CC}^{-\frac{\beta}{q}}u|^2$, and observe that $\frac{|v|^{2}}{\|v\|^{2}_{L^{2}(\lambda)}}d\lambda(x)$ is a probability measure with respect to left Haar measure on $\G$.
 By combining \eqref{nashjensen1} and \eqref{harsobinnon2}, we have
 \begin{equation*}\label{nashjensen2}
     \begin{split}
2\log\|v\|_{L^{2}(\lambda)}-\log\|v\|_{L^{1}(\lambda)}&=\log \left(\frac{\|v\|^{2}_{L^2(\lambda)}}{\|v\|_{L^1(\lambda)}} \right)  \\&
=  h \left( \frac{\|v\|_{L^1(\lambda)}}{\|v\|^{2}_{L^2(\lambda)}}\right)\\&=h \left( \int_{\G}\frac{1}{|v|}|v|^2 \|v\|^{-2}_{L^2(\lambda)}\,d\lambda(x)\right)\\&
\leq \int_{\G} h \left( \frac{1}{|v|}\right) |v|^2 \|v\|^{-2}_{L^2(\lambda)}\,d\lambda(x)\\&
=\int_{\G} \frac{|v|^2}{\|v\|^{2}_{L^2(\lambda)}}\log |v| \,d\lambda(x)\\& 
\stackrel{\eqref{harsobinnon2}}\leq \log\|v\|_{L^{2}(\lambda)}+\frac{q}{2(q-2)}\log\left(C\frac{\|u\|^{2}_{L^{2}_{a}(\lambda)}}{\|v\|^{2}_{L^{2}(\lambda)}}\right)\\&         
   =\frac{q}{2(q-2)}\log\left(C\|u\|^{2}_{L^{2}_{a}(\lambda)}\right)+\log\|v\|^{1-\frac{q}{q-2}}_{L^{2}(\lambda)}\\&
   =\frac{q}{2(q-2)}\log\left(C\|u\|^{2}_{L^{2}_{a}(\lambda)}\right)+\left(1-\frac{q}{q-2}\right)\log\|v\|_{L^{2}(\lambda)}.
     \end{split}
 \end{equation*}
By using this last fact, we have
\begin{equation}
    \begin{split}
        \left(1+\frac{q}{q-2}\right)\log\|v\|_{L^{2}(\lambda)}\leq \frac{q}{2(q-2)}\log\left(C\|u\|^{2}_{L^{2}_{a}(\lambda)}\|v\|^{\frac{2(q-2)}{q}}_{L^{1}(\lambda)}\right),
    \end{split}
\end{equation}
so that we have 
\begin{equation}
    \|v\|^{\frac{2(q-2)}{q}+2}_{L^{2}(\lambda)}\leq C\|u\|^{2}_{L^{2}_{a}(\lambda)}\|v\|^{\frac{2(q-2)}{q}}_{L^{1}(\lambda)}.
\end{equation}
Finally, this means
\begin{equation}
   \||\cdot|_{CC}^{-\frac{\beta}{q}}u\|^{\frac{2(q-2)}{q}+2}_{L^{2}(\lambda)}\leq C\|u\|^{2}_{L^{2}_{a}(\lambda)}\||\cdot|_{CC}^{-\frac{\beta}{q}}u\|^{\frac{2(q-2)}{q}}_{L^{1}(\lambda)}. 
\end{equation}
This completes the proof.
 \end{proof}
\section{Logarithmic Sobolev and Gross inequalities on stratified groups}

We now give some corollaries of the previous results for the setting of stratified Lie groups. First we note that for $a=1$ and $p=2$, we can rewrite \eqref{a2=0} using the horizontal gradient on $\G.$ Indeed, we use Corollary \ref{COR:Lpspb123} with the sub-Laplacian 
$\Delta_\G=-\nabla_H^*\nabla_H$.

\begin{cor}\label{Cor.Log}
Let $\G$ be a stratified Lie group with homogeneous dimension $Q$. Then, the following log-Sobolev inequality is satisfied
\begin{equation}
    \label{cor.log.sob}
    \int_{\mathbb{G}}|u|^2 \log|u|\,dx \leq \frac{Q}{4}\log \left(A \int_{\mathbb{G}}|\nabla_{H}u|^2\,dx \right)\,,
\end{equation}
for every $u \in H^1{(\G)}$ such that $\|u\|_{L^2(\G)}=1$, where \begin{equation}\label{Cinf}A=\inf_{2<q<\frac{2Q}{Q-2}}(C_{GN, -\Delta_\G,1,2,q})^{\frac{4}{Q(q-2)}}\,,
\end{equation}
where $C_{GN, -\Delta_\G,1,2,q}$ is the constant given in \eqref{bestcongn} for $a_1=1,a_2=0$ and $p=2$.
\end{cor}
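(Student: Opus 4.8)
The plan is to specialise the $L^2$ case of the unweighted logarithmic Sobolev inequality already established in Corollary \ref{COR:Lpspb123} to the sub-Laplacian, and then to rewrite the resulting homogeneous Sobolev seminorm in terms of the horizontal gradient. First I would observe that on a stratified group the sub-Laplacian $\Delta_\G=\sum_{i=1}^{n_1}X_i^2$ is, up to sign, an admissible operator for that corollary: the operator $-\Delta_\G=\nabla_H^*\nabla_H$ is left-invariant, hypoelliptic by H\"ormander's theorem, non-negative and (essentially) self-adjoint on $L^2(\G)$, and homogeneous of degree $\nu=2$ with respect to the dilations, being a sum of squares of the degree-one generators $X_1,\dots,X_{n_1}$. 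Hence $-\Delta_\G$ is a positive Rockland operator of degree $\nu=2$, and we may take $\R=-\Delta_\G$, so that $\R^{a/\nu}=(-\Delta_\G)^{1/2}$ when $a=1$.

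Next I would apply estimate \eqref{a2=0u1} with the choices $p=2$ and $a=1$. Since $\|u\|_{L^2(\G)}=1$, the normalisation hypothesis of \eqref{a2=0u1} is satisfied; the prefactor becomes $\frac{Q}{ap^2}=\frac{Q}{4}$, and the admissible range $p<q<\frac{Qp}{Q-ap}$ collapses to $2<q<\frac{2Q}{Q-2}$, so that the constant $A$ produced by \eqref{CinfGN} is precisely the infimum \eqref{Cinf}. This already yields
\[
\int_{\mathbb{G}}|u|^2\log|u|\,dx\le\frac{Q}{4}\log\Big(A\int_\G|(-\Delta_\G)^{1/2}u|^2\,dx\Big).
\]

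The only remaining point, and the one I expect to carry whatever substance the argument has, is to identify the homogeneous Sobolev seminorm with the horizontal energy, namely
\[
\int_\G|(-\Delta_\G)^{1/2}u|^2\,dx=\int_\G|\nabla_H u|^2\,dx.
\]
This is exactly the content of the identity \eqref{lapfor}: writing $-\Delta_\G=\nabla_H^*\nabla_H$ and using the self-adjointness of $(-\Delta_\G)^{1/2}$, one has $\|(-\Delta_\G)^{1/2}u\|_{L^2}^2=\langle -\Delta_\G u,u\rangle=\langle\nabla_H u,\nabla_H u\rangle=\|\nabla_H u\|_{L^2}^2$, where the integration by parts is first justified for $u\in C_0^\infty(\G)$ and then extended by density to the relevant Sobolev space. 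Substituting this identity into the displayed inequality turns the right-hand side into $\frac{Q}{4}\log\big(A\int_\G|\nabla_H u|^2\,dx\big)$, which is precisely \eqref{cor.log.sob}, completing the proof. No genuine obstacle is anticipated beyond the verification of the homogeneity and Rockland property of $-\Delta_\G$ and the standard spectral/integration-by-parts identity \eqref{lapfor}.
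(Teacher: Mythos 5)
Your proposal is correct and follows essentially the same route as the paper: the paper's proof likewise specialises the graded-group log-Sobolev inequality of Corollary \ref{COR:Lpspb123} to $p=2$, $a=1$ with $\mathcal{R}=-\Delta_\G$ (merely re-running the minimisation over $q\in(2,\tfrac{2Q}{Q-2})$ explicitly rather than quoting \eqref{a2=0u1} directly), and then invokes the identity \eqref{lapfor} to replace $\|(-\Delta_\G)^{1/2}u\|_{L^2(\G)}$ by $\|\nabla_H u\|_{L^2(\G)}$. Your verification that $-\Delta_\G$ is a positive Rockland operator of degree $2$ and your integration-by-parts justification of \eqref{lapfor} are exactly the ingredients the paper relies on.
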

\begin{proof}
Inequality \eqref{a2=0} for $p=2$, $\|u\|_{L^2(\G)}=1$, $a_1=1$, and for $\mathcal{R}_1$ being the (positive) sub-Laplacian $-\Delta_\G$ on $\G$ gives 
\[
\int_{\G}|u|^2 \log(|u|^2)\,dx \leq \frac{q}{q-2}\log \left(C_{GN, -\Delta_\G,1,2,q}^{\frac{2}{q}}\|(-\Delta_{\G})^{1/2}u\|_{L^2(\G)}^{\frac{Q(q-2)}{q}} \right)\,,
\]
for any $q\in \left(2,\frac{2Q}{Q-2}\right)$. Rewriting the above inequality as
\[
\int_{\G}|u|^2 \log(|u|^2)\,dx \leq \frac{q}{q-2}\log \left( (C_{GN, -\Delta_\G,1,2,q})^{\frac{4}{Q(q-2)}}\|(-\Delta_{\G})^{1/2}u\|_{L^2(\G)}^{2}\right)^{\frac{Q(q-2)}{2q}}\,,
\]
and applying the properties of the logarithm we get 
\begin{equation}\label{in.cor.log.sob}
\int_{\G}|u|^2 \log(|u|)\,dx \leq \frac{Q}{4}\log \left( (C_{GN, -\Delta_\G,1,2,q})^{\frac{4}{Q(q-2)}}\int_{\G}|\nabla_{H}u|^2\,dx\right)\,.
\end{equation}
Finally, the latter inequality, combined with \eqref{lapfor}, proves \eqref{cor.log.sob} for any $q\in \left(2,\frac{2Q}{Q-2}\right)$, and the proof is now complete.
\end{proof}

\begin{rem}\label{REM:density}
Let us recall an argument, following the proof of \cite[Theorem 8.14]{LL01}, that \eqref{cor.log.sob} for $C_0^\infty(\G)$ implies the same inequality for all functions in $H^1(\G)$.
To prove that \eqref{cor.log.sob} holds true for $u \in H^1(\G)$, let us consider $u \in H^1(\G)\cap L^{2+\delta}(\G)\cap L^{2-\delta}(\G)$ for some $\delta>0$. Following the ideas from \cite[Theorem 8.14]{LL01} in the Euclidean setting, we can show that the derivative $\frac{d}{dp}\|u\|_{L^{p}(\G)}^{2}|_{p=2}$ can be defined formally and we have 
\begin{equation}\label{der.p}
\frac{d}{dp}\|u\|_{L^{p}(\G)}^{2}|_{p=2}=\int_{\G}|u(x)|^2\log (|u(x)|)dx\,.
\end{equation}
 Indeed, note that since the function $p \mapsto t^p$ is convex, the function 
\[
Q(p_1,p_2)=\frac{t^{p_1}-t^{p_2}}{p_1-p_2}\,,
\]
is monotonically non-deacreasing in $p_2$ for fixed $p_1$. Hence, for $-\delta\leq \varepsilon \leq \delta$, we have 
\[
\frac{|u(x)|^{2}-|u(x)|^{2-\delta}}{\delta}\leq \frac{|u(x)|^{2}-|u(x)|^{2-\varepsilon}}{\varepsilon}\leq \frac{|u(x)|^{2+\delta}-|u(x)|^{2}}{\delta}\,.
\]
Taking the limit of $\frac{|u(x)|^{2}-|u(x)|^{2-\varepsilon}}{\varepsilon}$ as $\varepsilon \rightarrow 0$, and integrating over $\G$, the dominated convergence theorem implies that 
\[
\frac{d}{dp}\|u\|_{L^{p}(\G)}^{2}|_{p=2}=\int_{\G}|u(x)|^2 \log \left(\frac{|u(x)|}{\|u\|_{L^{2}(\mu)}}\right)dx\,,
\]
which gives \eqref{der.p} since $\|u\|_{L^2(\G)}=1$. From the above, using the density of $C_{0}^{\infty}(\G)$ in $H^{1}(\G)\cap L^{2+\delta}(\G)\cap L^{2-\delta}(\G)$ we have shown that \eqref{cor.log.sob} makes sense for $u \in H^1(\G)\cap L^{2+\delta}(\G)\cap L^{2-\delta}(\G)$, $\|u\|_{L^2(\G)}=1$. Hence, the result follows by the Sobolev embedding theorem, since for large $u(x)$, we have that $|u(x)|^2\log {|u(x)|} \lesssim |u(x)|^{2+\delta} $ for any $\delta>0.$
\end{rem}

It is well-known that in $\mathbb R^n$, the Euclidean log-Sobolev inequality implies the Gross version of such an inequality with the Gaussian measure. We now show that to some extent, this implication continues to hold also on stratified groups. However, instead of the Gaussian measure on the whole space we consider the Gaussian measure on the first stratum. In the case of the Euclidean  $\G=\mathbb R^n$, this recovers the above implication since the first stratum is the whole space.
Since in $\mathbb R^n$, we can take $A$ in \eqref{cor.log.sob} to be $A=\frac{2}{\pi e n}$, and we have $Q=n_1=n$, as should be expected, the constant $\gamma$ in \eqref{k} is
$\gamma_{\mathbb R^n}=(2\pi)^{-n/2}$, recovering the well-known Gross inequality \eqref{EQ:Gross} on $\mathbb R^n$. For the exact value of this constant on the Heisenberg group, see \cite{CKR23}.

\begin{thm}\label{semi-g}
Let $\G$ be a stratified group with homogeneous dimension $Q$, topological dimension $n$, and let $n_1$ be the dimension of the first stratum of its Lie algebra, i.e., for $x \in \G$ we can write $x=(x',x'') \in \mathbb{R}^{n_1} \times \mathbb{R}^{n-n_1}$. Then the following ``semi-Gaussian'' log-Sobolev inequality is satisfied
\begin{equation}
    \label{gaus.log.sob}
    \int_{\G}|g|^2\log|g|\,d\mu \leq \int_{\G} |\nabla_{H}g|^2\,d\mu\,,
\end{equation}
for any $g \in H^{1}(\G,\mu)$ such that $\|g\|_{L^2(\mu)}=1,$
 where $\mu=\mu_1 \otimes \mu_2$, and $\mu_1$ is the Gaussian measure on $\mathbb{R}^{n_1}$ given by $d\mu_1=\gamma e^{-\frac{|x'|^2}{2}}dx'$, for $x'\in \mathbb{R}^{n_1}$, and $|x'|$ being the Euclidean norm of $x'$, where 
 \begin{equation}
     \label{k}
     \gamma:=(4^{-1}Q e^{\frac{2n_1}{Q}{-1}}A)^{Q/2}\,,
 \end{equation}
 where $A$ is given in \eqref{Cinf},
 and $\mu_2$ is the Lebesgue measure $dx''$ on $\mathbb{R}^{n-n_1}$.
\end{thm}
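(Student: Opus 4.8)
The plan is to reduce the Gaussian inequality \eqref{gaus.log.sob} to the already-established log-Sobolev inequality \eqref{cor.log.sob} of Corollary \ref{Cor.Log} by the classical substitution that absorbs the Gaussian weight into the unknown function. Writing $\phi(x'):=\gamma^{1/2}e^{-|x'|^2/4}$, so that $\phi^2\,dx=d\mu$, I would set $u:=g\phi$; then $\|u\|_{L^2(\G)}=\|g\|_{L^2(\mu)}=1$, so $u$ is admissible in \eqref{cor.log.sob}. The strategy is to compute both sides of \eqref{cor.log.sob} for this $u$ in terms of the three quantities $E:=\int_\G|\nabla_H g|^2\,d\mu$, $S:=\int_\G g^2\log|g|\,d\mu$, and the first-stratum second moment $M:=\int_\G g^2|x'|^2\,d\mu$, and then to linearise the logarithm and pick $\gamma$ so that everything collapses to $S\le E$. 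As usual one first reduces to real, nonnegative $g$ (replacing $g$ by $|g|$ and using $|\nabla_H|g||\le|\nabla_H g|$) in a dense class of functions for which the integrations by parts below are legitimate.

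The left-hand side is immediate: since $\log|u|=\log|g|+\tfrac12\log\gamma-\tfrac14|x'|^2$ and $|u|^2\,dx=g^2\,d\mu$, one obtains
\begin{equation*}
\int_\G|u|^2\log|u|\,dx=S+\tfrac12\log\gamma-\tfrac14 M.
\end{equation*}
The key computation is the Dirichlet energy $\int_\G|\nabla_H u|^2\,dx$. Here I would use the structural fact that, in the stratified coordinates $x=(x',x'')$, each horizontal field has the form $X_k=\partial_{x_k}+\sum_{l>n_1}a_{kl}(x)\partial_{x_l}$, so that $X_k$ acts as the Euclidean derivative $\partial_{x_k}$ on any function of $x'$ alone; in particular $\nabla_H\phi=-\tfrac12 x'\phi$ and $|\nabla_H\phi|^2=\tfrac14|x'|^2\phi^2$. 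Expanding $|\nabla_H u|^2=\phi^2|\nabla_H g|^2+\tfrac12\nabla_H(g^2)\cdot\nabla_H(\phi^2)+g^2|\nabla_H\phi|^2$ and integrating, the first term gives $E$ and the last gives $\tfrac14 M$. For the cross term I would integrate by parts, using that the $X_k$ are divergence-free for the Haar (Lebesgue) measure because $\G$ is unimodular; since $X_k(x_k\,\gamma e^{-|x'|^2/2})=\partial_{x_k}(x_k\,\gamma e^{-|x'|^2/2})=\gamma e^{-|x'|^2/2}(1-x_k^2)$, summing over $k=1,\dots,n_1$ and using $\int g^2\,d\mu=1$ shows that the cross term equals $\tfrac{n_1}{2}-\tfrac12 M$, which together with the term $\tfrac14 M$ yields
\begin{equation*}
\int_\G|\nabla_H u|^2\,dx=E+\tfrac{n_1}{2}-\tfrac14 M.
\end{equation*}

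Feeding these into \eqref{cor.log.sob} gives $S+\tfrac12\log\gamma-\tfrac14 M\le\tfrac{Q}{4}\log\!\big(A(E+\tfrac{n_1}{2}-\tfrac14 M)\big)$. Now I would linearise with the elementary bound $\log W\le\lambda W-1-\log\lambda$, valid for every $\lambda>0$, and make the decisive choice $\lambda=\tfrac{4}{QA}$. This value is engineered so that the coefficient of $M$ vanishes and the coefficient of $E$ becomes exactly $1$ simultaneously, leaving
\begin{equation*}
S\le E+\tfrac{n_1}{2}-\tfrac{Q}{4}-\tfrac{Q}{4}\log\lambda-\tfrac12\log\gamma.
\end{equation*}
Choosing $\gamma$ to annihilate the constant, i.e. $\log\gamma=n_1-\tfrac{Q}{2}+\tfrac{Q}{2}\log\tfrac{QA}{4}$, reproduces precisely the value \eqref{k} and delivers the desired inequality $S\le E$. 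The one genuinely delicate point is the exact cancellation of the moment term $M$: it works only because the Gaussian profile $e^{-|x'|^2/4}$ produces a quadratic $|\nabla_H\phi|^2$ matched, after the integration by parts, by the cross term, and because the single parameter $\lambda$ can be tuned to kill the $E$-normalisation and the $M$-coefficient at once — this forced alignment is exactly what pins down the constant $\gamma$ in \eqref{k} from the constant $A$ in \eqref{Cinf}.
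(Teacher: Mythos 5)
Your proposal is correct and follows essentially the same route as the paper's proof: the substitution $u=g\phi$ with $\phi=\gamma^{1/2}e^{-|x'|^2/4}$ is exactly the paper's $f$, the integration by parts using the first-stratum structure of the $X_k$ yields the same identity $\int_\G|\nabla_H u|^2\,dx=E+\tfrac{n_1}{2}-\tfrac14 M$, and your linearisation $\log W\le\lambda W-1-\log\lambda$ with $\lambda=\tfrac{4}{QA}$ is the paper's use of $\log r\le r-1$ in disguise, pinning down the same $\gamma$ in \eqref{k}. The only differences are presentational (organising the computation around $E$, $S$, $M$ and reducing to nonnegative $g$ rather than carrying real parts).
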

\begin{proof}
Assume that $g\in C^{\infty}_{0}(\G)$ is a compactly supported function such that $\|g\|_{L^2(\mu)}=1,$ with $\mu$ as in the conditions of the theorem. Notice that the space of compactly supported functions $C^{\infty}_{0}(\G)$ is still a dense subspace of $H^1(\G,\mu), L^p(\G,\mu)$, $p \in [1,\infty)$. Let us define $f(x)$ by 
 \begin{equation}
     \label{fg}
     f(x)=\gamma^{1/2}e^{-\frac{|x'|^2}{4}}g(x),
  \end{equation}    
where $\gamma$ is as in \eqref{k}.  Then, clearly $f \in  C^{\infty}_{0}(\G) $, and  we claim that $f \in L^2(\G)$ and $\|f\|_{L^2(\G)}=1$.
Indeed, we observe that for $\mu$ as in the hypothesis we get 
 \begin{equation}
     \label{norms1}
1=\|g\|_{L^2(\mu)}^2=\int_{\G}\gamma^{-1} e^{\frac{|x'|^2}{2}}|f(x)|^2\,d\mu=\int_{\G}|f(x)|^2\,dx\,.
 \end{equation}
Applying the log-Sobolev inequality \eqref{cor.log.sob} to $f$, we estimate  
\begin{eqnarray}\label{thm.eq.glogg}
\int_{\G}|g(x)|^2\log |g(x)|\,d\mu & = & \int_{\G} |f(x)|^2 \log |\gamma^{-1/2}e^{\frac{|x'|^2}{4}}f(x)|\,dx \nonumber\\
& = & \int_\G |f(x)|^2\log |f(x)| dx \nonumber \\
& & +\log (\gamma^{-1/2})+\int_{\G}\frac{|x'|^2}{4} |f(x)|^2\,dx\,
\nonumber\\
& \leq & \frac{Q}{4}\log \left(A \int_{\G}|\nabla_{H}f(x)|^2\,dx\right) \nonumber \\
& & +\log (\gamma^{-1/2})+\int_{\G}\frac{|x'|^2}{4} |f(x)|^2\,dx\,.
\end{eqnarray}
Now, recall that for the $n_1$ elements of the first stratum of the Lie algebra of $\G$ the following formulas are known (see \cite{FR16}):
\[
X_i=\partial_{x_{i}^{'}}+\sum_{j=1}^{n-n_1}p_{j}^{i}(x')\partial_{x''_{j}}\,, \forall i=1,\ldots,n_1\,. 
\]
Hence, for each $i=1,\ldots,n_1$, we can calculate 
\begin{eqnarray}\label{Xg}
|X_ig(x)|^2 &=& \gamma^{-1} e^{\frac{|x'|^2}{2}} \left|X_if(x)+\frac{x'_{i}}{2}f(x) \right|^{2}\nonumber\\
&=& \gamma^{-1} e^{\frac{|x'|^2}{2}} \left(|X_if(x)|^2+\frac{(x'_{i})^2}{4}|f(x)|^2+{\rm Re} \overline{(X_if(x))}x'_{i}f(x) \right)\,.
\end{eqnarray}
Moreover, notice that for each $x'_{i}$,  $i=1,\ldots,n_1$, we have
\begin{eqnarray*}
{\rm Re}\int_{\G}\overline{(\partial_{x'_{i}}f(x))}x'_{i}f(x)\,dx & =& -{\rm Re}\int_{\G}(\partial_{x'_{i}}f(x))x'_{i}\overline{f(x)}\,dx-\int_{\G}|f(x)|^2\,dx\\
&=& -{\rm Re}\int_{\G}\overline{(\partial_{x'_{i}}f(x))}x'_{i}f(x)\,dx-1\,,
\end{eqnarray*}
where the last implies that 
\begin{equation}
    \label{int.parts1}
    {\rm Re}\int_{\G}\overline{(\partial_{x'_{i}}f(x))}x'_{i}f(x)\,dx=-\frac{1}{2}\,.
\end{equation}
Additionally, for $j=1,\ldots,n-n_1$ and for $i=1,\ldots,n_1$, we have
\begin{eqnarray*}
   {\rm Re} \int_{\G}p_{j}(x')\overline{(\partial_{x_{j}''}f(x))}x'_{i}f(x)& = & - {\rm Re}\int_{\G}\partial_{x_{j}''}((p_{j}(x')f(x)x'_{i})\overline{f(x)}\,dx\\
    &=&- {\rm Re}\int_{\G}p_{j}(x')\overline{(\partial_{x_{j}''}f(x))}x'_{i}f(x)\,dx\,,
\end{eqnarray*}
i.e., we have that 
\begin{equation}  \label{int.parts2}
   {\rm Re} \int_{\G}p_{j}(x')\overline{(\partial_{x_{j}''}f(x))}x'_{i}f(x)\,dx=0\,.
\end{equation}
Hence, by \eqref{int.parts1} and \eqref{int.parts2} we get 
\[
\int_{\G}{\rm Re}\overline{(X_if(x))}x'_{i}f(x)\,dx=-\frac{1}{2}\,,
\]
for each $i=1,\ldots,n_1$, so that using the expression \eqref{Xg} we get 
\begin{equation}\label{thm.eq.nablag}
\int_{\G}|\nabla_{H}g(x)|^2\,d\mu=\int_{\G}|\nabla_{H}f(x)|^2\,dx+\int_{\G}\frac{|x'|^2}{4}|f(x)|^2\,dx-\frac{n_1}{2}\,.
\end{equation}
Now, by using \eqref{thm.eq.glogg} and \eqref{thm.eq.nablag}, to show \eqref{gaus.log.sob} it suffices to show that 
\[
\frac{Q}{4}\log \left(A \int_{\G}|\nabla_{H}f(x)|^2\,dx\right)+\log (\gamma^{-1/2}) \leq \int_{\G}|\nabla_{H}f(x)|^2\,dx-\frac{n_1}{2}\,,
\]
or equivalently,
\[
\frac{Q}{4}\log \left( A \int_{\G}|\nabla_{H}f(x)|^2\,dx \right)+\log (\gamma^{-1/2}) + \log e^{\frac{n_1}{2}}\leq \int_{\G}|\nabla_{H}f(x)|^2\,dx\,.
\]
Now, we can instead show that
\[
\frac{Q}{4} \log \left(A \gamma^{-\frac{2}{Q}} e^{\frac{n_1}{2}\cdot\frac{4}{Q}} \int_{\G}|\nabla_{H}f(x)|^2\,dx  \right)\leq \int_{\G}|\nabla_{H}f(x)|^2\,dx\,,
\]
or that 
\[
\log \left(A \gamma^{-\frac{2}{Q}} e^{\frac{2n_1}{Q}} \int_{\G}|\nabla_{H}f(x)|^2\,dx  \right)\leq \frac{4}{Q} \int_{\G}|\nabla_{H}f(x)|^2\,dx\,.  
\]
Now, since $\log r \leq r-1$, for all $r>0$, it suffices to show that 
\[
{e^{-1}}A\gamma^{-\frac{2}{Q}}e^{\frac{2n_1}{Q}} \int_{\G}|\nabla_{H}f(x)|^2\,dx \leq \frac{4}{Q} \int_{\G}|\nabla_{H}f(x)|^2\,dx\,,
\]
{since then we would have
\begin{eqnarray*}
\log \left(A \gamma^{-\frac{2}{Q}} e^{\frac{2n_1}{Q}} \int_{\G}|\nabla_{H}f(x)|^2\,dx  \right) & = & \log \left(e^{-1} A\gamma^{-\frac{2}{Q}}e^{\frac{2n_1}{Q}} \int_{\G}|\nabla_{H}f(x)|^2\,dx\right)+1\\
& \leq & e^{-1} A\gamma^{-\frac{2}{Q}}e^{\frac{2n_1}{Q}} \int_{\G}|\nabla_{H}f(x)|^2\,dx\\
& = & \frac{4}{Q}  \int_{\G}|\nabla_{H}f(x)|^2\,dx,
%\\
%& <  & \frac{4}{Q} \int_{\G}|\nabla_{H}f(x)|^2\,dx\,.
\end{eqnarray*}}%
where the last holds as an equality for the choice of $\gamma$ as in \eqref{k}, and we have shown \eqref{gaus.log.sob} for $g \in  C^{\infty}_{0}(\G)$. 

Now, to prove that \eqref{gaus.log.sob} holds true for $g \in H^1(\G,\mu)$, we approximate $g$ in $H^1(\G,\mu)$ by a sequence of $g_n\in C_0^\infty(\G).$ We need to show that the integral $\int_\G |g_n|^2\log |g_n| dx$ has a limit.

Let us consider $f$ defined by \eqref{fg}. By \eqref{thm.eq.nablag}, we see that the operator $g\mapsto f$ is bounded from $H^1(\G,\mu)$ to $H^1(\G)$ and to $L^2(\G,|x'|^2 dx).$ Since $f_n\in C_0^\infty(\G)$, by Remark \ref{REM:density}, there exists the limit $\int_\G |f_n|^2 \log |f_n| dx.$
Also, the limit of $f_n$ exists in $L^2(\G,|x'|^2 dx).$
Therefore, using the equality in \eqref{thm.eq.glogg}, the limit of $\int_\G |g_n|^2\log |g_n| dx$ also exists.
\end{proof}

As in the Euclidean setting, cf. e.g. \cite{Bec98}, one can show that ``horizontal'' log-Sobolev inequality \eqref{cor.log.sob} is equivalent to the Gross type ``semi-Gaussian'' log-Sobolev inequality  \eqref{gaus.log.sob}. 

\begin{thm}\label{equiv.thm}
Let $\G$ and $d\mu$ be as in the hypothesis of Theorem \ref{semi-g}. The following statements are true and imply each other:
\begin{enumerate}
    \item[(i)]  for $g$ such that $\|g\|_{L^2(\mu)}=1$ we have  \begin{equation}
    \label{thm.eq.itmq}
    \int_{\G}|g|^2 \log |g|\,d\mu \leq \int_{\G} |\nabla_{H}g|^2\,d\mu\,;\end{equation}
    \item[(ii)] for $f$ such that $\|f\|_{L^2(\G)}=1$ we have \begin{equation}\label{thm.eq.itm2}\int_{\G}|f|^{2}\log |f|\,dx \leq \frac{Q}{4}\log \left(A \int_{\G} |\nabla_{H}f|^2\,dx \right)\,,\end{equation} where $A$ is as in \eqref{Cinf}.
\end{enumerate}

\end{thm}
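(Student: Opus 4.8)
The plan is to treat the two implications separately. The implication $(ii)\Rightarrow(i)$ is precisely the content of Theorem \ref{semi-g}, whose proof takes \eqref{cor.log.sob} (that is, \eqref{thm.eq.itm2}) as its input and produces \eqref{gaus.log.sob} (that is, \eqref{thm.eq.itmq}); so nothing new is needed there, and I would simply cite it. The real work is the converse $(i)\Rightarrow(ii)$, which I would obtain by \emph{reversing} the Gaussian substitution used in Theorem \ref{semi-g} and then exploiting the dilation structure of $\G$ to pass from an additive to a logarithmic form.

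For $(i)\Rightarrow(ii)$, take any $f$ with $\|f\|_{L^2(\G)}=1$ and set $g(x)=\gamma^{-1/2}e^{|x'|^2/4}f(x)$, with $\gamma$ as in \eqref{k}. Reading \eqref{norms1} in reverse gives $\|g\|_{L^2(\mu)}=1$, so hypothesis $(i)$ applies to $g$. The two integrals in \eqref{thm.eq.itmq} can be rewritten through $f$ exactly as in the proof of Theorem \ref{semi-g}: by the change of variables behind \eqref{thm.eq.glogg} the left-hand side equals $\int_\G|f|^2\log|f|\,dx+\int_\G\tfrac{|x'|^2}{4}|f|^2\,dx+\tfrac12\log\gamma^{-1}$, while by \eqref{thm.eq.nablag} the right-hand side $\int_\G|\nabla_H g|^2\,d\mu$ equals $\int_\G|\nabla_H f|^2\,dx+\int_\G\tfrac{|x'|^2}{4}|f|^2\,dx-\tfrac{n_1}{2}$. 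Substituting these into $(i)$ and cancelling the common weighted term $\int_\G\tfrac{|x'|^2}{4}|f|^2\,dx$ yields the linearised log-Sobolev inequality
\[
\int_\G|f|^2\log|f|\,dx\leq \int_\G|\nabla_H f|^2\,dx-\frac{n_1}{2}+\frac12\log\gamma,
\]
valid for every $f$ with $\|f\|_{L^2(\G)}=1$.

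The final step converts this additive form into the multiplicative form \eqref{thm.eq.itm2} by a scaling optimisation. For $t>0$ put $f_t(x)=t^{Q/2}f(\delta_t x)$; since the Haar measure scales by $t^{-Q}$ under $\delta_t$ one has $\|f_t\|_{L^2(\G)}=1$, and since the horizontal vector fields are $\delta_t$-homogeneous of degree one one gets $\int_\G|\nabla_H f_t|^2\,dx=t^2\int_\G|\nabla_H f|^2\,dx$ together with $\int_\G|f_t|^2\log|f_t|\,dx=\tfrac{Q}{2}\log t+\int_\G|f|^2\log|f|\,dx$. Feeding $f_t$ into the linearised inequality and writing $I:=\int_\G|\nabla_H f|^2\,dx$ gives $\int_\G|f|^2\log|f|\,dx\leq t^2 I-\tfrac{Q}{2}\log t-\tfrac{n_1}{2}+\tfrac12\log\gamma$ for all $t>0$. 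Minimising the right-hand side over $t$ (the minimiser is $t^2=Q/(4I)$) produces $\int_\G|f|^2\log|f|\,dx\leq \tfrac{Q}{4}-\tfrac{Q}{4}\log\tfrac{Q}{4I}-\tfrac{n_1}{2}+\tfrac12\log\gamma$, and a direct check using the explicit value of $\gamma$ in \eqref{k} shows the right-hand side collapses to exactly $\tfrac{Q}{4}\log(AI)$, which is \eqref{thm.eq.itm2}.

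The substitution step is essentially bookkeeping, since it only reverses the identities already established for Theorem \ref{semi-g}. The crux, and the step I expect to demand the most care, is the scaling optimisation: one must correctly use the degree-one homogeneity of the horizontal gradient to obtain the factor $t^2$ on the Dirichlet energy and the shift $\tfrac{Q}{2}\log t$ in the entropy, and then verify that the constants $-\tfrac{n_1}{2}$ and $\tfrac12\log\gamma$ conspire, through the precise dependence of $\gamma$ on $A$, $Q$ and $n_1$, to reproduce the constant $A$ of \eqref{Cinf}. This interplay is exactly what forces the normalisation \eqref{k} and explains why the Gaussian weight must sit on the first stratum.
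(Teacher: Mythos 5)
Your proposal is correct and follows essentially the same route as the paper: the implication (ii)$\Rightarrow$(i) is delegated to Theorem \ref{semi-g}, and (i)$\Rightarrow$(ii) is obtained by the reverse substitution $g=\gamma^{-1/2}e^{|x'|^2/4}f$, cancellation of the weighted term via \eqref{thm.eq.nablag} to get the additive inequality \eqref{before.eps}, and then the dilation $f_\epsilon(x)=\epsilon^{Q/2}f(D_\epsilon x)$ with optimisation over $\epsilon$ (your minimiser $t^2=Q/(4I)$ matches the paper's computation, and your observation that the constants collapse exactly to $\tfrac{Q}{4}\log(AI)$ is consistent with, indeed slightly sharper than, the paper's final chain of inequalities).
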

\begin{proof}
It was already shown in Corollary \ref{Cor.Log} that (ii) holds true, and in
Theorem \ref{semi-g} that (ii) implies (i); so it is enough to show that also (i) implies (ii).
For $\gamma$ as in \eqref{k}, let us define $g$ by 
\[
g(x)=\gamma^{-\frac{1}{2}}e^{\frac{|x'|^2}{4}}f(x)\,.
\]
Assuming $\|g\|_{L^2(\mu)}=1$, then we also have that $\|f\|_{L^2(\G)}=1$. Now, a combination of
\[
\int_{\G}|g(x)|^2\log |g(x)|\,d\mu  =  \int_{\G} |f|^2 \log |\gamma^{-1/2}e^{\frac{|x'|^2}{4}}f(x)|\,dx\,,
\]
together with the equality \eqref{thm.eq.nablag} and the assumption \eqref{thm.eq.itmq} gives
\begin{equation}\label{before.eps}
\log (\gamma^{-\frac{1}{2}}e^{\frac{n_1}{2}})+\int_{\G}|f|^2\log |f|\,dx \leq \int_{\G} |\nabla_{H}f|^2\,dx\,.
\end{equation}
Now, for any $\epsilon>0$, we have $\|f_\epsilon\|_{L^2(\G)}=1$, where $f_\epsilon(x):=\epsilon^{\frac{Q}{2}}f(D_{\epsilon}(x))$, where the mapping $D_\epsilon: \G \rightarrow \G$ stands for the dilations on $\G$. Now, an application of \eqref{before.eps} gives
\begin{eqnarray}\label{after.eps}
\int_{\G}|f|^2\log |f|\,dx & \leq & \epsilon^2 \int_{\G} |\nabla_{H}f|^2\,dx-\frac{Q}{2}\log \epsilon-\log (\gamma^{-\frac{1}{2}}e^{\frac{n_1}{2}})\nonumber\\
& =& \epsilon^2 \int_{\G} |\nabla_{H}f|^2\,dx-\frac{Q}{2}\log \epsilon+\frac{Q}{4} \log \left(\frac{QA}{4e} \right)
\end{eqnarray}
    for all $\epsilon>0$. 
    Now, minimizing the expression on the right-hand side of \eqref{after.eps} over $\epsilon$ we get  
    \begin{eqnarray*}
    \int_{\G}|f|^2\log |f|\,dx & \leq & \frac{Q}{4}+\frac{Q}{4} \log \left(4 \int_{\G}|\nabla_{H}f|^2\,dx \right)- \frac{Q}{4}\log Q+\frac{Q}{4} \log \left(\frac{QA}{4e} \right)\\
    & = & \frac{Q}{4} \log \left(A \int_{\G}|\nabla_{H}f|^2\,dx \right)\,.
    \end{eqnarray*}
    This shows that \eqref{thm.eq.itmq} implies \eqref{thm.eq.itm2}. 
\end{proof}

\section{Weighted Gross logarithmic Sobolev inequality}

Note that the weighted log-Sobolev inequality also implies its Gross version. In this section we discuss this implication, as well as give examples for it for the Euclidean space, with different choices of homogeneous quasi-norms.

To start with, let us note a special case of the weighted Sobolev inequality in Theorem \ref{wsobingrthm} in the case of $p=2$ and $a=1$, also using the equality \eqref{lapfor}:
let $\G$ be a stratified Lie group of homogeneous dimension $Q$ and let $|\cdot|$ be an arbitrary homogeneous quasi-norm. Assume that $0\leq \beta <2<Q$ and $\frac{1}{p}-\frac{1}{q}=\frac{a}{Q}-\frac{\beta}{qQ}$. Then, as a consequence of Theorem \ref{wsobingrthm}, we have 
    \begin{equation}\label{harsobingr-2-f}
         \int_{\mathbb{G}}\frac{|x|^{-\frac{2\beta }{q}}|u|^{2}}{\||x|^{-\frac{\beta}{q}} u\|^{2}_{L^{2}(\G)}}\log\left(\frac{|x|^{-\frac{2\beta }{q}}|u|^{2}}{\||x|^{-\frac{\beta}{q}} u\|^{2}_{L^{2}(\G)}}\right) dx \leq \frac{Q-\beta}{2-\beta}\log\left(C\frac{\|\nabla_H u\|^{2}_{L^{2}(\G)}}{\||x|^{-\frac{\beta}{q}} u\|^{2}_{L^{2}(\G)}}\right),
     \end{equation}
     for all nontrivial $|x|^{-\frac{\beta}{q}}|u|,\nabla_Hu\in L^{p}(\G)$.
In particular, the condition on $q$ implies that 
 $q=\frac{2(Q-\beta)}{Q-2},$ so that
 $\frac{\beta}{q}=\frac{\beta(Q-2)}{2(Q-\beta)}.$
 Altogether, we obtain the following
 extension of Corollary \ref{Cor.Log}, which can be viewed as the case of $\beta=0$ of the following statement:
 \begin{cor}\label{Cor.Log-w}
Let $\G$ be a stratified Lie group with homogeneous dimension $Q$ and let $|\cdot|$ be an arbitrary homogeneous quasi-norm. Assume that $0\leq \beta <2<Q$. Then, there exists $C>0$ such that the following weighted log-Sobolev inequality holds,
  \begin{equation}\label{harsobingr-2}
         \int_{\mathbb{G}}{|x|^{-\frac{\beta(Q-2)}{Q-\beta}}|u|^{2}}\log\left({|x|^{-\frac{\beta(Q-2)}{2(Q-\beta)}}|u|}\right) dx \leq \frac{Q-\beta}{2(2-\beta)}\log\left(C\int_\G |\nabla_H u|^{2} dx\right),
     \end{equation}
for every $u$ such that $\||x|^{-\frac{\beta(Q-2)}{2(Q-\beta)}}u\|_{L^2(\G)}=1$.
\end{cor}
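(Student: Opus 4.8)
The plan is to obtain Corollary \ref{Cor.Log-w} as a direct specialization of the weighted logarithmic Sobolev inequality of Theorem \ref{wsobingrthm}, taken in the case $p=2$, $a=1$, with the Rockland operator chosen to be the positive sub-Laplacian $\R=-\Delta_\G$ (homogeneous of degree $\nu=2$, so that $\R^{a/\nu}=(-\Delta_\G)^{1/2}$). No new analytic input is needed beyond Theorem \ref{wsobingrthm} and the identity \eqref{lapfor}.

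First I would check that the hypotheses of Theorem \ref{wsobingrthm} hold for this choice. With $p=2$ and $a=1$ the standing assumption $0\le\beta<2<Q$ is exactly the required $0\le\beta<ap<Q$. The balance condition $\frac1p-\frac1q=\frac aQ-\frac\beta{qQ}$ then pins down $q$: solving $\frac12-\frac1q=\frac1Q-\frac\beta{qQ}$ gives $q=\frac{2(Q-\beta)}{Q-2}$, and since $\beta<2$ this satisfies $q>2=p$, which is what the underlying logarithmic H\"older step in Lemma \ref{holder} requires. From this value of $q$ one reads off the exponents appearing in the statement, namely $\frac\beta q=\frac{\beta(Q-2)}{2(Q-\beta)}$ and $\frac{2\beta}q=\frac{\beta(Q-2)}{Q-\beta}$, while the prefactor $\frac{Q-\beta}{ap-\beta}$ from \eqref{harsobingr} becomes $\frac{Q-\beta}{2-\beta}$.

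Next I would substitute into \eqref{harsobingr}. For $p=2$ the identity \eqref{lapfor}, $\|(-\Delta_\G)^{1/2}u\|_{L^2(\G)}=\|\nabla_H u\|_{L^2(\G)}$, replaces the Sobolev norm $\|\R^{a/\nu}u\|_{L^2(\G)}$ on the right-hand side by $\|\nabla_H u\|_{L^2(\G)}$. Imposing the normalization $\||x|^{-\beta/q}u\|_{L^2(\G)}=1$ clears the denominators on both sides and leaves
\[
\int_\G |x|^{-2\beta/q}|u|^2\log\bigl(|x|^{-2\beta/q}|u|^2\bigr)\,dx\le \frac{Q-\beta}{2-\beta}\log\bigl(C\|\nabla_H u\|_{L^2(\G)}^2\bigr).
\]
Finally I would rewrite the left-hand logarithm via $\log\bigl(|x|^{-2\beta/q}|u|^2\bigr)=2\log\bigl(|x|^{-\beta/q}|u|\bigr)$; the resulting factor $2$ halves the prefactor to $\frac{Q-\beta}{2(2-\beta)}$, and writing $\|\nabla_H u\|_{L^2(\G)}^2=\int_\G|\nabla_H u|^2\,dx$ inside the logarithm produces exactly the claimed inequality, with the normalization condition taking the stated form $\||x|^{-\frac{\beta(Q-2)}{2(Q-\beta)}}u\|_{L^2(\G)}=1$ after substituting $\frac\beta q=\frac{\beta(Q-2)}{2(Q-\beta)}$.

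Since the entire argument is a substitution into an already-proved inequality, there is no genuine analytic obstacle here. The only points demanding care are purely clerical: verifying that the two occurrences of $\beta/q$ collapse correctly to $\frac{\beta(Q-2)}{2(Q-\beta)}$ and $\frac{\beta(Q-2)}{Q-\beta}$, and checking the legitimacy of taking $-\Delta_\G$ as a positive Rockland operator of degree $2$ together with \eqref{lapfor} — which is precisely the identification already used in Corollary \ref{Cor.Log}.
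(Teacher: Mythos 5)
Your proposal is correct and follows exactly the route the paper takes: the text preceding Corollary \ref{Cor.Log-w} derives it precisely by specializing Theorem \ref{wsobingrthm} to $p=2$, $a=1$ with $\R=-\Delta_\G$, using \eqref{lapfor}, solving the balance condition to get $q=\frac{2(Q-\beta)}{Q-2}$, and normalizing so that the factor $2$ from $\log(|x|^{-2\beta/q}|u|^2)=2\log(|x|^{-\beta/q}|u|)$ halves the prefactor to $\frac{Q-\beta}{2(2-\beta)}$. All your exponent and constant computations check out.
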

Consequently, the argument similar to that of Theorem \ref{semi-g} gives us its weighted version:

\begin{thm}\label{semi-g-w}
Let $\G$ be a stratified group with homogeneous dimension $Q$, topological dimension $n$, and let $n_1$ be the dimension of the first stratum of its Lie algebra, i.e., for $x \in \G$ we can write $x=(x',x'') \in \mathbb{R}^{n_1} \times \mathbb{R}^{n-n_1}$. 
Let $|\cdot|$ be an arbitrary homogeneous quasi-norm on $\G$ and let $0\leq \beta <2<Q$. 
Let $|x'|$ denote the Euclidean norm of $x'$, and let $M>0$ be a constant such that we have 
\begin{equation}\label{EQ:norms}
|x'|\leq M|x|,
\end{equation}
for the quasi-norm $|x|$, for all $x\in\G.$
Then the following weighted ``semi-Gaussian'' log-Sobolev inequality is satisfied
\begin{equation}
    \label{gaus.log.sob-w}
    \int_{\G}|x|^{-\frac{\beta(Q-2)}{Q-\beta}}|g|^2\log\left(|x|^{-\frac{\beta(Q-2)}{2(Q-\beta)}}|g|\right)\,d\mu \leq \int_{\G} |\nabla_{H}g|^2\,d\mu\,,
\end{equation}
for all $g$ such that $\||x|^{-\frac{\beta(Q-2)}{2(Q-\beta)}}g\|_{L^2(\mu)}=1,$
 where $\mu=\mu_1 \otimes \mu_2$, and $\mu_1$ is the Gaussian measure on $\mathbb{R}^{n_1}$ given by $d\mu_1=\gamma e^{-\frac{|x'|^2}{2}}dx'$, for $x'\in \mathbb{R}^{n_1}$, where the normalisation constant $\gamma$ is now given by 
 \begin{equation}
     \label{k-w}
     \gamma:= \left(\frac{Q-\beta}{2(2-\beta
     )}C e^{(n_1+\frac{M^2}{2})\frac{2-\beta}{Q-\beta}-1}
     \right)^{\frac{Q-\beta}{2-\beta}}\,,
 \end{equation} 
 and $\mu_2$ is the Lebesgue measure $dx''$ on $\mathbb{R}^{n-n_1}$.
\end{thm}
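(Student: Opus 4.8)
The plan is to mimic the proof of Theorem \ref{semi-g}, replacing the unweighted log-Sobolev inequality \eqref{cor.log.sob} by its weighted counterpart from Corollary \ref{Cor.Log-w}. Throughout I would abbreviate $\sigma:=\frac{\beta(Q-2)}{2(Q-\beta)}\in[0,1)$ and $\kappa:=\frac{Q-\beta}{2(2-\beta)}$, so that the weight in \eqref{gaus.log.sob-w} is $|x|^{-2\sigma}$ and Corollary \ref{Cor.Log-w} reads $\int_\G|x|^{-2\sigma}|u|^2\log(|x|^{-\sigma}|u|)\,dx\le\kappa\log\bigl(C\int_\G|\nabla_H u|^2\,dx\bigr)$ whenever $\||x|^{-\sigma}u\|_{L^2(\G)}=1$. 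Given $g$ with $\||x|^{-\sigma}g\|_{L^2(\mu)}=1$, I would set $f(x):=\gamma^{1/2}e^{-|x'|^2/4}g(x)$; unwinding the Gaussian density of $\mu_1$ against $|g|^2$ shows $\int_\G|x|^{-2\sigma}|f|^2\,dx=\int_\G|x|^{-2\sigma}|g|^2\,d\mu=1$, so that $f$ is admissible for Corollary \ref{Cor.Log-w}.

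Next I would expand the left-hand side of \eqref{gaus.log.sob-w} in terms of $f$. Writing $g=\gamma^{-1/2}e^{|x'|^2/4}f$ and cancelling the density against $d\mu$, the logarithm splits into three pieces, giving
\[
\int_\G|x|^{-2\sigma}|g|^2\log(|x|^{-\sigma}|g|)\,d\mu=\int_\G|x|^{-2\sigma}|f|^2\log(|x|^{-\sigma}|f|)\,dx+\log(\gamma^{-1/2})+\tfrac14\int_\G|x|^{-2\sigma}|x'|^2|f|^2\,dx,
\]
where the normalization $\int_\G|x|^{-2\sigma}|f|^2\,dx=1$ was used on the middle term, and the first integral is then controlled by Corollary \ref{Cor.Log-w}. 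For the right-hand side I would reproduce the vector-field computation of Theorem \ref{semi-g} verbatim: using $X_i=\partial_{x_i'}+\sum_j p_j^i(x')\partial_{x_j''}$ one gets $|X_ig|^2=\gamma^{-1}e^{|x'|^2/2}|X_if+\tfrac{x_i'}{2}f|^2$, and the same integrations by parts as in \eqref{int.parts1}--\eqref{int.parts2} (which are unaffected by the weight, since it never enters the gradient) yield
\[
\int_\G|\nabla_Hg|^2\,d\mu=\int_\G|\nabla_Hf|^2\,dx+\tfrac14\int_\G|x'|^2|f|^2\,dx-\tfrac{n_1}{2}.
\]
Collecting the two displays and writing $T:=\int_\G|\nabla_Hf|^2\,dx$, the claim \eqref{gaus.log.sob-w} reduces to
\[
\kappa\log(CT)+\log(\gamma^{-1/2})+\tfrac14\int_\G|x|^{-2\sigma}|x'|^2|f|^2\,dx\le T+\tfrac14\int_\G|x'|^2|f|^2\,dx-\tfrac{n_1}{2}.
\]
At this point I would use the elementary bound $\kappa\log(T/\kappa)+\kappa\le T$ (i.e. $\log r\le r-1$ with $r=T/\kappa$) to absorb the logarithmic term into $T$; with the explicit $\gamma$ from \eqref{k-w} the inequality then collapses to the single estimate
\[
\tfrac14\int_\G|x|^{-2\sigma}|x'|^2|f|^2\,dx\le\tfrac14\int_\G|x'|^2|f|^2\,dx+\tfrac{M^2}{4}.
\]

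The main point, and the only genuinely new ingredient compared with Theorem \ref{semi-g}, is this last estimate, which reconciles the \emph{weighted} term produced by Corollary \ref{Cor.Log-w} with the \emph{unweighted} term produced by the gradient computation. I would derive it from the pointwise inequality
\[
|x|^{-2\sigma}|x'|^2\le|x'|^2+M^2|x|^{-2\sigma},\qquad x\in\G,
\]
which rearranges to $|x|^{-2\sigma}(|x'|^2-M^2)\le|x'|^2$: when $|x'|\le M$ the left-hand side is nonpositive, and when $|x'|>M$ the comparability \eqref{EQ:norms} forces $|x|>1$, whence $|x|^{-2\sigma}\le1$ (here $\sigma\ge0$ is used) and the bound is immediate. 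Integrating against $|f|^2\,dx$ and invoking $\int_\G|x|^{-2\sigma}|f|^2\,dx=1$ produces exactly the $\tfrac{M^2}{4}$, which is precisely the $\tfrac{M^2}{2}$ encoded in the exponent of $\gamma$ in \eqref{k-w}; this is where hypothesis \eqref{EQ:norms}, relating the Euclidean norm on the first stratum to the homogeneous quasi-norm, is essential. I expect the verification of the exact form of $\gamma$ (tracking the constants through $\log(\gamma^{-1/2})$) to be the only delicate bookkeeping; for $\beta=0$ one has $\sigma=0$, the pointwise estimate becomes the equality of the two $|x'|^2$-integrals, and the argument degenerates to that of Theorem \ref{semi-g}.
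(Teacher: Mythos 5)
Your proposal is correct and follows essentially the same route as the paper's own proof: the same substitution $f(x)=\gamma^{1/2}e^{-|x'|^2/4}g(x)$, the same application of the weighted log-Sobolev inequality of Corollary \ref{Cor.Log-w} to $f$, the same reuse of the unweighted gradient identity \eqref{thm.eq.nablag}, and the same final reduction to the choice of $\gamma$ via $\log r\leq r-1$. The only (cosmetic) difference is in how the term $\int_{\G}|x|^{-\frac{\beta(Q-2)}{Q-\beta}}|x'|^2|f|^2\,dx$ is handled: you use a pointwise inequality whose case analysis splits at $|x'|=M$, while the paper splits the integral at $|x|=1$; both arguments rest on exactly the same two observations ($|x'|\leq M|x|$ where the quasi-norm is small, $|x|^{-\frac{\beta(Q-2)}{Q-\beta}}\leq 1$ where it is large) and yield the identical bound $\frac{M^2}{4}+\int_\G\frac{|x'|^2}{4}|f|^2\,dx$.
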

We note that the constant $M$ as in \eqref{EQ:norms} exists for any homogeneous quasi-norm $|\cdot|.$ Indeed, on its unit quasi-sphere, $|x'|$ achieves a maximum $M$, so \eqref{EQ:norms} follows by homogeneity, with $M=\max\limits_{|x|=1} |x'|.$ Second, we note that the right hand-side of \eqref{gaus.log.sob-w} does not depend on the choice of a homogeneous quasi-norm $|\cdot|,$ while its left-hand side does depend on it. This is, however, taken care of by the normalisation condition with $\gamma$, which also depends on $|\cdot|.$ 

\begin{proof}[Proof of Theorem \ref{semi-g-w}]
The proof is similar to that of Theorem \ref{semi-g}, so let us only indicate the differences. We keep the same relation between $f$ and $g$. The inequality 
\eqref{thm.eq.glogg} is then replaced by
\begin{eqnarray}\label{thm.eq.glogg-w}
& & \int_{\G}|x|^{-\frac{\beta(Q-2)}{Q-\beta}}|g(x)|^2\log \left(|x|^{-\frac{\beta(Q-2)}{2(Q-\beta)}}|g(x)|\right)\,d\mu \nonumber \\
& = & \int_{\G} |x|^{-\frac{\beta(Q-2)}{Q-\beta}}|f(x)|^2 \log |\gamma^{-1/2}e^{\frac{|x'|^2}{4}} 
|x|^{-\frac{\beta(Q-2)}{2(Q-\beta)}}
f(x)|\,dx \nonumber\\
& \leq & \frac{Q-\beta}{2(2-\beta)}\log \left(C \int_{\G}|\nabla_{H}f(x)|^2\,dx\right) \nonumber \\
& & +\log (\gamma^{-1/2})+\int_{\G}\frac{|x'|^2}{4} |x|^{-\frac{\beta(Q-2)}{Q-\beta}}|f(x)|^2\,dx\,,
\end{eqnarray}
where we used inequality \eqref{harsobingr-2}. We then estimate
\begin{multline}\label{EQ:aterm}
 \int_{\G}\frac{|x'|^2}{4} |x|^{-\frac{\beta(Q-2)}{Q-\beta}}|f(x)|^2\,dx=\int_{|x|\leq 1} +\int_{|x|\geq 1}  \\ \leq
 \frac{M^2}{4}\int_{|x|\leq 1} |x|^{-\frac{\beta(Q-2)}{Q-\beta}}|f(x)|^2\,dx+
 \int_{|x|\geq 1}
 \frac{|x'|^2}{4} |f(x)|^2\,dx
 \\ \leq \frac{M^2}{4}+\int_\G  \frac{|x'|^2}{4} |f(x)|^2\,dx,
\end{multline}
using that $\||x|^{-\frac{\beta(Q-2)}{2(Q-\beta)}}f\|_{L^2(\G)}=1.$
Now, combining the equality \eqref{thm.eq.nablag} together with \eqref{thm.eq.glogg-w} and \eqref{EQ:aterm}, we conclude that to show \eqref{gaus.log.sob-w}, it is enough to show that 
\begin{equation}\label{for.k-w}
\log \left(C\gamma^{-\frac{2-\beta}{Q-\beta}}e^{\left(\frac{n_1}{2}+\frac{M^2}{4} \right)\frac{2(2-\beta)}{Q-\beta}} \int_{\G}|\nabla_{H}f(x)|^2\,dx \right)\leq \frac{2(2-\beta)}{Q-\beta}\int_{\G}|\nabla_{H}f(x)|^2\,dx\,.
\end{equation}
Hence, using arguments similar to those developed in Theorem \ref{semi-g} we can check that for $\gamma$ as in \eqref{k-w} inequality \eqref{for.k-w} is satisfied, and this completes the proof of Theorem \ref{semi-g-w}. 
\end{proof}

As a curiosity, let us record a consequence of Theorem \ref{semi-g-w} for the Euclidean space $\G=\mathbb R^n$, in which case $Q=n_1=n$ and $\nabla_H=\nabla$. We also exemplify the fact that we can take any homogeneous quasi-norm on $\mathbb R^n$, so we take  a family 
$|x|_p=(|x_1|^p+\cdots+|x_n|^p)^{\frac{1}{p}}$ for $1\leq p<\infty$, and $|x|_\infty=\max\limits_{1\leq j\leq n}|x_j|.$ In this case we have \eqref{EQ:norms} with $M_p=\max\{n^{\frac12-\frac1p},1\}.$ As a result, we obtain the following weighted version of the Gross log-Sobolev inequality on $\mathbb R^n$.

\begin{cor}\label{semi-g-w-Rn}
Let $0\leq \beta <2<n$. For $1\leq p\leq\infty$, we denote by $|x|_p$ the $\ell^p$-norm of $x\in \mathbb R^n$, as above.
Let $\mu$ be the Gaussian measure on $\mathbb{R}^{n}$ given by 
$d\mu=\gamma e^{-\frac{|x|_2^2}{2}}dx$, with 
 \begin{equation}
     \label{k-w-Rn}
     \gamma:=\left(\frac{n-\beta}{2(2-\beta
     )}C e^{(n+\frac{M_p^2}{2})\frac{2-\beta}{n-\beta}-1}
     \right)^{\frac{n-\beta}{2-\beta}}\,,
 \end{equation}
 where $C$ is the constant from \eqref{harsobingr-2}, and $M_p=\max\{n^{\frac12-\frac1p},1\}.$
Then the following weighted semi-Gaussian log-Sobolev inequality is satisfied
\begin{equation}
    \label{gaus.log.sob-w-Rn}
    \int_{\mathbb R^n}|x|_p^{-\frac{\beta(n-2)}{n-\beta}}|g(x)|^2\log\left(|x|_p^{-\frac{\beta(n-2)}{2(n-\beta)}}|g(x)|\right)\,d\mu(x) \leq \int_{\mathbb R^n} |\nabla g(x)|^2\,d\mu(x)\,,
\end{equation}
for all $g$ such that $\||x|_p^{-\frac{\beta(n-2)}{2(n-\beta)}}g\|_{L^2(\mu)}=1.$
\end{cor}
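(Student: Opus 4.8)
The plan is to obtain Corollary \ref{semi-g-w-Rn} as the direct Euclidean specialization of Theorem \ref{semi-g-w}, the only substantive work being the identification of the admissible constant $M_p$ for the family of $\ell^p$-norms. First I would set $\G=\mathbb R^n$, viewed as the abelian stratified group whose single stratum is the whole space. In this case the homogeneous dimension, the topological dimension, and the dimension of the first stratum all coincide, $Q=n=n_1$, so that the second factor $\mu_2$ (living on $\mathbb R^{n-n_1}=\mathbb R^0$) is trivial and $\mu=\mu_1$ is the full Gaussian measure $\gamma e^{-|x|_2^2/2}\,dx$ on $\mathbb R^n$. Moreover the horizontal gradient reduces to the full gradient, $\nabla_H=\nabla$, and $x'=x$ with $|x'|=|x|_2$ the Euclidean norm.

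With these identifications, the weighted semi-Gaussian inequality \eqref{gaus.log.sob-w} becomes precisely \eqref{gaus.log.sob-w-Rn}, the constant $C$ being carried over from \eqref{harsobingr-2}, and the normalisation constant \eqref{k-w} reduces to \eqref{k-w-Rn} once $Q$ and $n_1$ are both replaced by $n$. Thus, modulo the value of $M_p$, the corollary is a transcription of Theorem \ref{semi-g-w}.

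The one genuine computation is to verify that $M_p=\max\{n^{1/2-1/p},1\}$ is admissible in the sense of \eqref{EQ:norms}, i.e.\ that $|x|_2\leq M_p|x|_p$ for all $x\in\mathbb R^n$ and $1\leq p\leq\infty$. For $1\leq p\leq 2$ the $\ell^p$-norms are monotone decreasing in $p$, so $|x|_2\leq |x|_p$ and $M_p=1$ suffices. For $2\leq p<\infty$ I would apply H\"older's inequality with exponents $p/2$ and $p/(p-2)$ to the identity $\sum_j|x_j|^2=\sum_j|x_j|^2\cdot 1$, obtaining $|x|_2^2\leq |x|_p^2\,n^{(p-2)/p}$, that is $|x|_2\leq n^{1/2-1/p}|x|_p$; the endpoint $p=\infty$ follows from the elementary bound $|x|_2\leq n^{1/2}|x|_\infty$. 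Since $n^{1/2-1/p}\geq 1$ exactly when $p\geq 2$, the two regimes combine into $M_p=\max\{n^{1/2-1/p},1\}$.

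There is essentially no obstacle here beyond bookkeeping: once Theorem \ref{semi-g-w} is available, the corollary amounts to substituting $Q=n_1=n$, $\nabla_H=\nabla$, and the value of $M_p$ just computed into \eqref{gaus.log.sob-w} and \eqref{k-w}. The only point requiring a moment's care is splitting the sharp comparison constant between $|x|_2$ and $|x|_p$ correctly across the two ranges $p\leq 2$ and $p\geq 2$, which is exactly the information recorded by $M_p=\max\{n^{1/2-1/p},1\}$.
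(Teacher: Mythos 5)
Your proposal is correct and follows exactly the paper's route: the paper obtains this corollary by specializing Theorem \ref{semi-g-w} to $\G=\mathbb R^n$ with $Q=n_1=n$, $\nabla_H=\nabla$, and the comparison constant $M_p=\max\{n^{\frac12-\frac1p},1\}$ in \eqref{EQ:norms}. Your verification of $M_p$ (monotonicity of $\ell^p$-norms for $p\leq 2$, H\"older with exponents $p/2$ and $p/(p-2)$ for $p\geq 2$) is in fact more detailed than what the paper records, which simply asserts the value of $M_p$.
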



\begin{thebibliography}{BBGM12}
\bibitem[Ada79]{Ad79}
R. A. Adams.
\newblock General Logarthmic Sobolev Inequalities and Orlicz Imbeddings.
\newblock {\em J. Funct. Anal.}, 34:292--303, 1979.

\bibitem[AC79]{AC79}
R. A. Adams and F. H. Clarke.
\newblock Gross's Logarithmic Sobolev Inequality: A Simple Proof.
\newblock {\em Amer. J. Math.}, 101(6):1265--1269, 1979.

\bibitem[AS94]{AS94}
S. Aida and D. W. Stroock.
\newblock Moment estimates derived from Poincar\'e and logarithmic Sobolev inequalities.
\newblock {\em Math. Res. Lett.}, 1(1):75--86, 1994.

\bibitem[AR20]{AR20}
R. Akylzhanov and M. Ruzhansky.
\newblock $L^p$-$L^q$ multipliers on locally compact groups.
\newblock {\em J. Funct. Anal.}, 278(108324):1--49, 2020.

\bibitem[AB00]{AB00}
C. An\'e, S. Blach\'ere, D. Chafa\"i, P. Foug\'eres, I. Gentil, F. Malrieu, C. Roberto and G. Scheffer.
\newblock {\em Sur les in\'egalites de Sobolev logarithmiques.} With a preface by D. Bakry and M. Ledoux.
\newblock  Panoramas et Synth\'eses, 10, {\em Soc. Math. Fr, Paris}, 2000. xvi+217 pp.

%\bibitem[AO73]{AO73}
%J. Acz\'el and A. M. Ostrowski.
%\newblock On the charactesisation of Shannon's etnropy by Shannon's inequality.
%\newblock {\em J. Austral Math. Soc.}, 16:368--374, 1973.
%\newblock 
\bibitem[A76]{A76}
Th. Aubin. Probl\`{e}mes isoperim\'{e}triques et espaces de Sobolev. {\em J. Diff. Geom.}, 11:573--598, 1976.
\bibitem[BBGM12]{BBGM12}
D. Bakry, F. Bolley, I. Gentil and P. Maheux.
\newblock Weighted Nash inequalities.
\newblock {\em Rev. Mat. Iberoam}, 28(3):879--905, 2012.

\bibitem[Bec75]{Bec75}
W. Beckner.
\newblock Inequalities in Fourier Analysis.
\newblock {\em Ann. Math.}, 102(1):159--182, 1975.

\bibitem[Bec89]{Bec89}
W. Beckner.
\newblock A Generalised Poincar\'e Inequality for Gaussian Measures.
\newblock {\em Proc. Amer. Math. Soc.}, 105(2):397--400, 1989.

\bibitem[Bec95]{Bec95}
W. Beckner.
\newblock Pitt’s inequality and the uncertainty principle.
\newblock {\em Proc. Amer. Math. Soc.}, 123:1897-1905, 1995. 

\bibitem[Bec98]{Bec98}
W. Beckner. 
\newblock Geometric proof of Nash's inequality.
\newblock {\em Int. Math. Res. Not.}, 1998(2):67--71, 1998.

\bibitem[Bec99]{Bec99}
W. Beckner.
\newblock Geometric asymptotics and the logarithmmic Sobolev inequality.
\newblock {\em Forum Math.}, 11:105--137, 1999.

\bibitem[Bec12]{Bec12}
W. Beckner.
\newblock Pitt's inequality and the fractional Laplacian: Sharp error estimates.
\newblock {\em Forum Math.}, 24:177--209, 2012.

\bibitem[BL00]{BL00}
S. Bobkov and M. Ledoux.
\newblock From Brunn-Minskowski to Brascamp-Lieb and to logarithmic Sobolev inequalities.
\newblock {\em Geom. Funct. Anal.}, 10(5):1028--1052, 2000.

\bibitem[BH99]{BH99}
T. Bodineau and B. Hellfer.
\newblock The log-Sobolev inequalities for unbounded spin systems. 
\newblock {\em J. Funct. Anal.}, 166(1):168--178, 1999.


\bibitem[BLU07]{BLU07}
A. Bonfiglioli, E. Lanconelli and F. Uguzzoni. {\em Stratified Lie Groups and Potential Theory for their Sub-Laplacians.} Springer, Berlin, 2007.

\bibitem [BDS20]{BDS20}
E. Bouin, J. Dolbeault, and C. Schmeiser.
\newblock A variational proof of Nash's inequality.
\newblock {\em Rend. Linc. Mat. Appl.}, 31(1):211--223, 2020.

\bibitem[Bou21]{Bou21}
E. Bou Dagher.
\newblock Note on the $q$-Logarithmic Sobolev and $p$-Talagrand Inequalities on Carnot Groups.
\newblock {\em arXiv:2105.04928}, 2021.

\bibitem[BZ21a]{BZ21a}
E. Bou Dagher and B. Zegarlinski.
\newblock Coercive Inequalities on Carnot Groups: Taming Singularities.
\newblock {\em arXiv:2105.03922}, 2021.

\bibitem[BZ21b]{BZ21b}
E. Bou Dagher and B. Zegarlinski.
\newblock Coercive Inequalities in Higher-Dimensional Anisotropic Heisenberg Group.
\newblock {\em arXiv:2105.02593}, 2021.



\bibitem[BZ21c]{BZ21c}
E. Bou Dagher and B. Zegarlinski.
\newblock Coercive Inequalities and $U$-Bounds.
\newblock {\em arXiv:2105.01759}, 2021.

\bibitem[BPTV19]{BPTV19}
 T. Bruno, M. Peloso, A. Tabacco and M. Vallarino.  Sobolev spaces on Lie groups: embedding theorems and algebra properties. {\em J. Funct. Anal.}, 276(10):3014--3050, 2019.
 
 \bibitem[BPV21]{BPV21}
 T. Bruno, M. M. Peloso and M. Vallarino.
 \newblock The Sobolev emedding constant on Lie groups.
 \newblock {\em arXiv:2006.07056v2}, 2021.
 
 \bibitem[Car91]{Car91}
 E. A. Carlen.
 \newblock Superadditivity of Fisher's Information and Logarithmic Sobolev Inequalities.
 \newblock {\em J. Funct. Anal.}, 101:194--211, 1991.
 
\bibitem[CL93]{CL} E. A. Carlen and M. Loss.
Sharp constant in Nash's inequality.
{\em Int. Math. Res. Not.}, 1993(7):213--215, 1993.

\bibitem[CFZ24]{CFZ21}
M. Chatzakou, S. Federico and B. Zegarlinski.
\newblock $q$-Poncar\'e inequalities on Carnot groups with filiform type Lie algebra.
\newblock {\em Potential Analysis}, (2024) 60:1067–1092.

\bibitem[CKR21b]{CKR21}
M. Chatzakou, A. Kassymov and M. Ruzhansky.
\newblock Logarithmic Hardy-Rellich inequalities on Lie groups.
\newblock {\em 	arXiv:2107.04874}, 2021.

\bibitem[CKR23]{CKR23}
M. Chatzakou, A. Kassymov, M. Ruzhansky. Logarithmic Sobolev, Hardy and Poincaré inequalities on the Heisenberg group.
{\em arXiv:2310.00992}, 2023.

\bibitem[CKR24]{CKR21a}
M. Chatzakou, A. Kassymov and M. Ruzhansky.
\newblock Anisotropic Shannon inequality.
\newblock {\em Osaka J. Math.}, 61 (2024), 79-89.

\bibitem[Dav89]{Dav89}
E. B. Davies.
\newblock {\em Heat kernels and spectral theory}. Cambridge Tracts in Mathematics, vol. 92, Cambridge University Press, Cambridge, 1989.

\bibitem[Das21]{Das21}
U. Das. On weighted logarithmic-Sobolev \& logarithmic-Hardy inequalities. 
{\em J. Math. Anal. Appl.} 496 (2021), no. 1, 124796, 30 pp.

\bibitem[DD03]{DD03}
M. Del Pino and J. Dolbeault.
\newblock The optimal Euclidean $L^p$-Sobolev logarithmic inequality.
\newblock {\em J. Funct. Anal.}, 197(1):151--161, 2003.

\bibitem[DDFT10]{DDFT10}
M. Del Pino, J. Dolbeault, S. Filippas and A. Tertikas.
\newblock 
A logarithmic Hardy inequality.
\newblock 
{\em J. Funct. Anal.}, 259(8):2045--2072, 2010.


\bibitem[FNQ18]{FNQ18}
T.~Feng, P.~Niu and J.~Qiao.
\newblock Several logarithmic Caffarelli-Kohn-Nirenberg inequalities and applications.
\newblock{\em J. Math. Anal. Appl.}, 457:822--840, 2018.

\bibitem[FR16]{FR16}
V. Fischer and M. Ruzhansky. {\em Quantization on nilpotent Lie groups}. Progress in Mathematics, Vol. 314, Birkh\"{a}user, 2016. 

\bibitem[FR17]{FR17}
V. Fischer and M. Ruzhansky. Sobolev spaces on graded Lie groups. {\em Annales de l'Institut Fourier} 67(4):1671-1723, 2017.

\bibitem[FS82]{FS}
G. Folland and E. M. Stein.
\newblock {\em Hardy spaces on homogeneous groups}.
\newblock Vol. 28 of Math. Notes, Princeton University Press, Princeton, N.J., 1982.

\bibitem[Gar07]{Gar07}
D. J. H. Garling .
\newblock {\em Inequalities: a journey into linear analysis.}
\newblock Cambridge Univ. Press, 2007.

\bibitem[GV00]{GV}
N. Garofalo and D. Vassilev.
Regularity near the characteristic set in the non-linear Dirichlet problem and conformal geometry of sub-Laplacians on Carnot groups.
\newblock{\em Math. Ann.}, 318:453--516, 2000.

\bibitem[Gen14]{Gen14}
I. Gentil.
\newblock Logarithmic Sobolev inequality for diffusion semigroups. 
\newblock In Y. Ollivier, H. Pajot, and C. Villani (Eds.), {\em Optimal Transport: Theory and Applications}, pp. 41--57, London Math. Soc. Lecture Note Ser., 413, Cambridge: Cambridge University Press.

\bibitem[GKR22]{GKR22}
S. Ghosh, V. Kumar, and M. Ruzhansky. Compact embeddings, eigenvalue problems, and subelliptic Brezis-Nirenberg equations involving singularity on stratified Lie groups. {\em Math. Ann.}, to appear. arXiv:2205.06007

\bibitem[Gro75]{Gro75}
L. Gross.
\newblock Logarithmic Sobolev inequalities.
\newblock {\em Amer. J. Math.}, 97(4):1061--1083, 1975.

\bibitem[Gro92]{Gro92}
L. Gross.
\newblock Logarithmic Sobolev inequalities on Lie groups.
\newblock {\em Illinois J. Math.}, 36(3):447--491, 1992.

\bibitem[GZ03]{GZ03}
A. Guionnet and B. Zegarlinski.
\newblock Lectures on logarithmic Sobolev inequalities.
\newblock {\em S\'eminaire de Probabilit\'es, XXXVI}, Lect. not. Math., 1801:1--134, 2003.

\bibitem[HMM05]{HMM05}
W. Hebisch, G. Mauceri and S. Meda. Spectral multipliers for Sub-Laplacians with drift on Lie groups. {\em Math. Z.}, 251(4):899-927, 2005.

\bibitem[HZ10]{HZ09}
W. Hebisch and B. Zegarlinski.
\newblock Coercive inequalities on metric measure spaces.
\newblock {\em J. Funct. Anal.}, 258:814--851, 2010.

%\bibitem[Isi72]{Isi72}
%A. Isihara.
%\newblock Information loss and entropy increase.
%\newblock {\em J. Math. Anal. Appl.}, 39(2):314--317, 1972.

%\bibitem[Kap87]{Kap87}
%J. N. Kapur.
%\newblock On convex semimetric spaces and generalized Shannon inequalities.
%\newblock {\em Indian J. Pure Appl. Math.}, 18(2):122--135, 1987.

\bibitem[KRS20]{KRS20}
A. Kassymov, M. Ruzhansky and D. Suragan. \newblock Fractional logarithmic inequalities and blow-up results with logarithmic nonlinearity on homogeneous groups.
\newblock {\em Nonlinear Differ. Equ. Appl.} 27:7, 2020.

\bibitem[KS20]{KS20}
A. Kassymov and D. Suragan. \newblock Fractional Hardy-Sobolev inequalities and existence results for fractional sub-Laplacians.
\newblock {\em Journal of Mathematical Sciences,} 250(2):337-350, 2020.

%\bibitem[Khi57]{Khi57}
%A. I. Khinchin.
%\newblock {\em Mathematical Foundations of Information Theory.}
%\newblock Dover Publications Inc., New York, 1957.

%\bibitem[KOS19]{KOS19}
%H. Kubo, T. Ogawa and T. Suguro.
%\newblock Beckner type of the logarithmic Sobolev and a new type of Shannon's inequalities and an application to the uncertainty principle. 
%\newblock {\em Proc. Amer. Math. Soc.}, 147(4):1511--1518, 2019.

%\bibitem[MPP00]{MPP00}
%M. Mati\'c, C. E. M. Pearce and J. Pec\u cari\'c.
%\newblock Shannon's and realted inequalities in information theory.
%\newblock In {\em Survey on classical inequalities}, 127--164, Math. Appl., 517, Kluwer Acad. Publ., Dordrecht, 2000.

\bibitem[LL01]{LL01}
E. Lieb and M. Loss.
\newblock{\em Analysis}. second ed.
\newblock Amer. Math. Soc., 2001.

\bibitem[Mer08]{Mer08}
J. Merker.
\newblock Generalizations of logarithmic Sobolev inequalities.
\newblock {\em Discrete Contin. Dyn. Syst. Ser.}, 1(2):329--338, 2008.

\bibitem[MF93]{MF93}
D. S. Mitrinovic and J. P. A.M Fink.
\newblock {\em Classical and New Inequalities in Analysis.}
\newblock Mathematics and its applications. East European series, vol. 61, 1993.


\bibitem[Nas58]{Nas58}
J. Nash.
\newblock Continuity of solutions of parabolic and elliptic equations.
\newblock {\em Amer. J. Math.}, 80(4):931-954, 1958. 

\bibitem[OS18]{OS18}
T. Ogawa and K. Seraku.
\newblock Logarithmic Sobolev and Shannon's inequalities and an application to the uncertainty principle.
\newblock {\em Comm. Pure Appl. Anal.}, 17(4):1651--1669, 2018.

\bibitem[OW16]{OW16}
T. Ogawa and H. Wakui.
\newblock Non-uniform bound and finite time blow up for solutions to a drift-diffusion equation in higher dimensions.
\newblock {\em Anal. Appl.}, 14(1):145--183, 2016.


\bibitem[Ros76]{Ros76}
J. Rosen.
\newblock Sobolev inequalities for weight spaces and supercontractivity.
\newblock {\em Trans. Amer. Math. Soc.}, 222:367--376, 1976.

\bibitem[RS19]{RS19}
M. Ruzhansky and D. Suragan.
\newblock \textit{ Hardy inequalities on homogeneous groups: 100 years of Hardy inequalities}.
\newblock Progress in Math., Vol. 327, Birkh\"auser/Springer, Cham, 2019. xvi+571pp.


\bibitem[RTY20]{RTY20}
M. Ruzhansky, N. Tokmagambetov and N. Yessirkegenov.
\newblock Best constants in Sobolev and Gagliardo-Nirenberg inequalities on graded groups and ground states for higher order nonlinear subelliptic equations.
\newblock {\em Calc. Var. Partial Differential Equations}, \textbf{59}, no. 175, 23pp, 2020.

\bibitem[RY18a]{RY18a}
M. Ruzhansky and N. Yessirkegenov. Hardy-Sobolev-Rellich, Hardy-Littlewood-Sobolev and Caffarelli-Kohn-Nirenberg inequalities on general Lie groups.  {\em  arXiv:1810.08845}, 2018. To appear in {\em J. Geom. Anal.}

\bibitem[RY18b]{RY18b}
M. Ruzhansky and N. Yessirkegenov. Hypoelliptic functional inequalities. {\em  arXiv:1805.01064}, 2018.

%\bibitem[RY19]{RY19}
%M. Ruzhansky and N. Yessirkegenov.
%\newblock Hardy, Hardy-Sobolev, Hardy-Littlewood-Sobolev and %Caffarelli-Kohn-Nirenberg inequalities on general Lie groups.
%\newblock {\em arXiv:1810.08845v2}, 2019.

\bibitem[Tal76]{T76}
G. Talenti. Best constant in Sobolev inequality. {\em Annali di Matematica pura ed Applicata.}  110(1)353-372, 1976.

\bibitem[Tos97]{Tos97}
G. Toscani.
\newblock Sur l' in\'egalit\'e logarithmique de Sobolev.
\newblock {\em C.R. Acad. Paris}, 324(1):689--694, 1997.

%\bibitem[Sha48]{Sha48}
%C. E. Shannon.
%\newblock A mathematical Theory of Communication.
%\newblock Reprinted with corrections from The {\em Bell System Techn. J.}, 27:379--423, 623–-656, 1948. 


\bibitem[SZ92]{SZ92}
D. W. Stroock and B. Zegarlinski.
\newblock The logarithmic Sobolev inequality for continuous spin systems on a lattice.
\newblock {\em J. Funct. Anal.}, 104(2):299--326, 1992.

%\bibitem[Wal76]{Wal76}
%W. Walter.
%\newblock Remark on a Paper by Acz\'el and Ostrowski.
%\newblock {J. Austral Math. Soc.}, 22A:165--166, 1976.

\bibitem[VSCC93]{VSCC93} N. Th. Varopoulos,
L. Saloff-Coste and T. Coulhon.
\newblock {\em Analysis and geometry on groups}.
\newblock Cambridge: Cambridge University Press, 1993.

\bibitem[Wei78]{Wei78}
F. B. Weissler.
\newblock Logarithmic Sobolev inequalities for the heat-diffusion semigroup.
\newblock {\em Trans. Amer. Math. Soc.}, 237:255-269, 1978.

\end{thebibliography}
\end{document}